\theoremstyle{plain}
\newtheorem{theorem}{Theorem}[section]
\newtheorem{proposition}[theorem]{Proposition}
\newtheorem{corollary}[theorem]{Corollary}
\newtheorem{lemma}[theorem]{Lemma}
\numberwithin{equation}{section}
\theoremstyle{definition}
\theoremstyle{remark}
\newtheorem*{remark}{Remark}
\newtheorem*{remarks}{Remarks}
\DeclareMathOperator{\Var}{Var}
\DeclareMathOperator{\trunc}{trunc}
\def \R {\mathbb{R}}
\def \E {\mathbb{E}}
\def \P {\mathbb{P}}
\def \one {{\bf 1}}
\def \NN {\mathcal{N}}
\def \e {\varepsilon}
\def \d {\delta}
\def \l {\lambda}
\def \s {\sigma}
\def \< {\langle}
\def \> {\rangle}
\def \^ {\widehat}
\def \Lip {{\rm{Lip}}}
\def \HS {{\rm{HS}}}
\begin{document}

\title[]{Spectral norm of products of random and deterministic matrices}

\author{Roman Vershynin}

\date{December 8, 2008; revised February 16, 2010}

\address{Department of Mathematics, University of Michigan, Ann Arbor, MI 48109, U.S.A.}
\email{romanv@umich.edu}

\thanks{Partially supported by NSF grant DMS FRG 0652617, 0918623 and Alfred P. Sloan Research Fellowship}

\begin{abstract}
  We study the spectral norm of matrices $W$ that can be factored as $W=BA$,
  where $A$ is a random matrix with independent mean zero entries and $B$ is a fixed matrix. 
  Under the $(4+\e)$-th moment assumption on the entries of $A$, we show that
  the spectral norm of such an $m \times n$ matrix $W$ is bounded by $\sqrt{m} + \sqrt{n}$,
  which is sharp.
  In other words, in regard to the spectral norm, products of random and deterministic 
  matrices behave similarly to random matrices with independent entries.
  This result along with the previous work of M.~Rudelson and the author implies that the 
  smallest singular value of a random $m \times n$ matrix with i.i.d. mean zero entries
  and bounded $(4+\e)$-th moment is bounded below by $\sqrt{m} - \sqrt{n-1}$ with high probability.
\end{abstract}

\maketitle

\section{Introduction}
%----------------------------------------------

This paper grew out of an attempt to understand the class of random matrices with non-independent 
entries, but which can be {\em factorized} through random matrices with independent entries.
Equivalently, we are interested in sample covariance matrices of a wide class of random vectors --
the linear transformations of vectors with independent entries. 

Here we study the spectral norm of such matrices.
Recall that the spectral norm $\|W\|$ is defined as the largest singular value of a matrix $W$,
which equals the largest eigenvalue of $\sqrt{W W^*}$.
Equivalently, the spectral norm can be defined as the $\ell_2 \to \ell_2$ operator norm:
$\|W\| = \sup_x \|Wx\|_2 / \|x\|_2$ where $\|\cdot\|_2$ denotes the Euclidean norm. 
The spectral norm of random matrices plays a notable role in particular in 
geometric functional analysis, computer science, statistical physics, and signal processing.

\subsection{Matrices with independent entries}
%..............................

For random matrices with independent and identically distributed entries, the spectral norm 
is well studied.
Let $W$ be an $m \times n$ matrix whose entries are real independent and identically
distributed random variables with mean zero, variance $1$ and finite fourth moment. 
Estimates of the type
\begin{equation}					\label{optimal norm}
  \|W\| \sim \sqrt{n} + \sqrt{m}
\end{equation}
are known to hold (and are sharp) in both the limit regime for dimensions increasing to infinity,
and the non-limit regime where the dimensions are fixed. 
The meaning of \eqref{optimal norm} in the limit regime is that, for a family 
of matrices as above whose dimensions $m$ and $n$ increase to infinity and whose aspect ratio $m/n$ converges 
to a constant, the ratio $\|W\| / ( \sqrt{n} + \sqrt{m} )$ converges to $1$ almost surely \cite{YBK}.

In the non-limit regime, i.e. for arbitrary dimensions $n$ and $m$,
variants of \eqref{optimal norm} were proved by Y.~Seginer \cite{Se} and R.~Latala \cite{La}.
If $W$ is an $m \times n$ matrix whose entries are i.i.d. mean zero 
random variables, then denoting the rows of $W$ by $X_i$ and the columns by $Y_j$, 
the result of Y.~Seginer \cite{Se} states that
$$
\E \|W\| \le C \big( \E \max_i \|X_i\|_2  + \E \max_j \|Y_j\|_2 \big)
$$
where $C$ is an absolute constant.
This estimate is sharp because $\|W\|$ is obviously bounded below by the Euclidean norm of 
any row and any column of $W$.
Furthermore, if the entries $w_{ij}$ of the matrix $W$ are not necessarily identically distributed, 
then R.~Latala's result \cite{La} states that 
$$
\E \|W\| \le C \big( \max_i \E \|X_i\|_2  + \max_j \E \|Y_j\|_2 + \big( \sum_{i,j} \E w_{ij}^4 \big)^{1/4} \big).
$$
In particular, if $W$ is an $m \times n$ matrix whose entries are independent random variables
with mean zero and fourth moments bounded by $1$, then one can deduce from 
either Y.~Seginer's or R.~Latala's result that 
\begin{equation}					\label{non-asymptotic}
  \E \|W\| \le C ( \sqrt{n} + \sqrt{m} ).
\end{equation}
This is a variant of \eqref{optimal norm} in the non-limit regime.

The fourth moment hypothesis is known to be necessary. Consider again a
family of matrices whose dimensions $m$ and $n$ increase to infinity, and whose aspect ratio $m/n$ converges 
to a constant. If the entries are independent and identically distributed random variables 
with mean zero and infinite fourth moment, then the upper limit of the ratio $\|W\| / ( \sqrt{n} + \sqrt{m} )$ 
is infinite almost surely \cite{YBK}.

\subsection{The main result}
%..............................

The main result of this paper is an extension of the optimal bound \eqref{non-asymptotic}
to the class of random matrices with non-independent entries, but which can be factored through 
a matrix with independent entries. 

\smallskip

\begin{theorem}                         \label{main}
  Let $\e \in (0,1)$ and let $m,n,N$ be positive integers.
  Consider a random $m \times n$ matrix $W = BA$,
  where $A$ is an $N \times n$ random matrix whose entries are independent random variables
  with mean zero and $(4+\e)$-th moment bounded by $1$, 
  and $B$ is an $m \times N$ non-random matrix such that $\|B\| \le 1$.
  Then
  \begin{equation}				\label{eq main}
    \E \|W\| \le C(\e) (\sqrt{n} + \sqrt{m})
  \end{equation}
  where $C(\e)$ is a function that depends only on $\e$.
\end{theorem}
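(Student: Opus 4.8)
The plan is to estimate $\E\|W\| = \E\sup_{x\in S^{n-1},\,y\in S^{m-1}} \langle BAx, y\rangle$ by a chaining/epsilon-net argument over $x$ combined with control of the random vector $Ax$ for each fixed $x$. Since $\|B\|\le 1$, for a fixed unit vector $x$ the vector $v = Ax$ has independent mean-zero coordinates $v_i = \sum_j a_{ij}x_j$ with $\E v_i^2 \le \sum_j x_j^2 = 1$, and—crucially—a uniformly bounded $(4+\e)$-th moment by Rosenthal's inequality (the $\ell_4$-type norm of a sum of independent mean-zero variables is controlled by the $\ell_2$ norm of their variances plus the $\ell_{4+\e}$ norms, all of which are $O(1)$ here). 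Thus $\|Bv\|_2 \le \|v\|_2$, and $\E\|v\|_2^2 \le N$, but more importantly $\|v\|_2$ concentrates: $\P(\|v\|_2 > t\sqrt N)$ decays because $\E\|v\|_2^{4+\e} = O(N^{(4+\e)/2})$ after again applying Rosenthal to $\sum_i v_i^2$. So pointwise we have the tail bound we need; the issue is the supremum over $x$.

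The main obstacle is the union bound over a net: a crude net on $S^{n-1}$ has $e^{Cn}$ points, but a $(4+\e)$-th moment tail only gives polynomial, not exponential, decay, so a naive union bound fails badly. The fix I would use is the standard device of splitting each column of $A$ (equivalently each entry) into a bounded part and an unbounded part via truncation at level $\sim (mn)^{c}$ (hence the \trunc operator declared in the preamble). For the truncated matrix $A'$, whose entries are bounded, one gets subgaussian-type behavior for $\langle A'x, u\rangle$ and an exponential-tail union bound over a net of $S^{n-1}\times S^{m-1}$ works, yielding $\E\|BA'\| \le C(\sqrt m + \sqrt n)$; one must check that recentering $A'$ (subtracting its mean, which is tiny by the moment bound) costs only a lower-order term, using that $\|B\|\le1$ keeps the deterministic correction controlled. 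For the "peaky" part $A'' = A - A'$, the $(4+\e)$-th moment assumption forces $A''$ to be extremely sparse: the expected number of nonzero entries is $\le Nn\cdot(mn)^{-c(4+\e)}$, which is $o(1)$ for suitable $c$, so with high probability $A''=0$, and on the complementary rare event one absorbs the contribution into $C(\e)$ by a direct $\E\|A''\|\le \E\|A''\|_{\HS} = (\sum \E a_{ij}''^2)^{1/2}$ estimate, again small.

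Concretely the steps are: (1) reduce to bounding $\E\|W\|$; (2) split $A = A' + A''$ by truncating entries at $t_0 := (mn)^{1/\e}$ (or similar), and correspondingly $W = BA' + BA''$; (3) for $BA''$, show $\E\|BA''\| \le \E\|A''\|_{\HS} \le (\sum_{ij}\E a_{ij}^2 \one_{|a_{ij}|>t_0})^{1/2}$ is $\le 1$ using that $\E a_{ij}^2\one_{|a_{ij}|>t_0}\le t_0^{-(2+\e)}\E|a_{ij}|^{4+\e}\le t_0^{-(2+\e)}$; (4) for $A'$, let $\bar A'$ be its centering and bound $\|B\E A'\|\le \|\E A'\|_{\HS}$ similarly small; (5) for the centered bounded matrix $\bar A'$, establish $\E\|B\bar A'\|\le C(\sqrt m+\sqrt n)$ by an $\e$-net argument: take nets $\mathcal N$ of $S^{n-1}$ and $\MM$ of $S^{m-1}$ of size $e^{Cn}, e^{Cm}$, bound $\P(\langle B\bar A' x, y\rangle > s)$ via Bernstein's inequality (entries bounded by $2t_0$, variances summing to $\le 1$ along each row of $B\bar A'$ since $\|B\|\le1$), choose $s = C(t_0)(\sqrt m + \sqrt n)$ and observe the Bernstein exponent beats $Cn + Cm$ precisely when the threshold is of order $\sqrt m + \sqrt n$ times a factor depending on $t_0$, i.e. on $\e$; (6) collect constants into $C(\e)$. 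The delicate point requiring care in step (5) is that Bernstein with bounded range $t_0$ gives a threshold of order $t_0\log(\text{net size}) = t_0(m+n)$ in the worst (sub-exponential) regime, which is too big; one must instead exploit that the relevant sum is over $N$ terms with \emph{total} variance $\le 1$, so the sub-gaussian regime of Bernstein—threshold $\sqrt{(m+n)}$—is the operative one as long as $t_0\sqrt{m+n}\le \sqrt{(m+n)}$... which forces a more careful truncation level or an iteration, and this balancing of the truncation level against the net entropy is exactly where the $(4+\e)$ (rather than $4$) is consumed.
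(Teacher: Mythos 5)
Your proposal has a genuine gap at exactly the point where the theorem is hardest: making the bound independent of $N$. Your step (3) bounds $\E\|BA''\|$ by $\E\|A''\|_{\HS} \le \big(\sum_{i,j}\E a_{ij}^2\one_{|a_{ij}|>t_0}\big)^{1/2}$, a sum over $Nn$ terms, each at most $t_0^{-(2+\e)}$. With $t_0=(mn)^{1/\e}$ this is $Nn\,(mn)^{-(2+\e)/\e}$, which is unbounded in $N$; to control it you would need $t_0 \gtrsim \big(Nn/(m+n)\big)^{1/(2+\e)}$, a truncation level that grows without bound as $N\to\infty$. This immediately collides with your step (5): there, as you yourself observe, Bernstein's inequality on the bilinear form $\langle B\bar A' x, y\rangle = \sum_{i,j}\bar a'_{ij} x_j (B^*y)_i$ has variance $\le 1$ but sub-exponential range $\sim t_0$ (since $\|x\|_\infty, \|B^*y\|_\infty$ can both equal $1$ on a crude net), so a union bound over $e^{C(m+n)}$ net points only succeeds if the deviation $s = C\sqrt{m+n}$ lies in the sub-gaussian regime, i.e.\ $t_0 s \lesssim 1$, forcing $t_0 \lesssim 1/\sqrt{m+n}$. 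There is no truncation level simultaneously small enough for step (5) and large enough for step (3). You flag this ("forces a more careful truncation level or an iteration") but the issue is not a matter of tuning: with a single net of the full sphere and a uniform Bernstein bound, the scheme cannot close, regardless of where one truncates.

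The paper resolves this by a structurally different argument. The $N$-independence is obtained not from truncation but from M.~Rudelson's theorem on $\|\sum_j \e_j u_j\otimes u_j\|$ applied to the columns of $W$, which gives $\E\|BA\|\lesssim\sqrt{n\log n}$ with no $N$-dependence (Proposition~\ref{up to log}). A refinement (Theorem~\ref{small columns}) removes the log when all columns of $B$ have $\|B_i\|_2 \lesssim \log^{-1/2-1/\e}(2n)$; since $\|B\|_{\HS}^2\le n$, at most $n\log^{O(1)}n \le n^{1+\e/10}$ columns can violate this. The remaining submatrix is \emph{almost square}, and only there does a truncation-plus-net argument enter — but crucially the net argument is not a single net of $S^{n-1}$: vectors are split into \emph{sparse} ($\|x\|_0 \lesssim np/\log(e/p)$) and \emph{spread} ($\|x\|_\infty \le M/\sqrt n$) classes (Propositions~\ref{sparse} and~\ref{spread}), each with its own entropy count and its own Lipschitz-constant bound in the Gaussian concentration inequality, and the matrix is dyadically decomposed by entry magnitude into increasingly sparse pieces (Theorem~\ref{small aij}). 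The sparse/spread split is precisely what replaces your appeal to the "sub-gaussian regime of Bernstein," which as written does not apply. Without the Rudelson step you cannot reach the almost-square reduction, and without the sparse/spread split the net argument does not close even in the almost-square case.

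Two smaller points. First, your early remark that $\E\|v\|_2^2\le N$ with $v=Ax$ is true but not useful; what saves the day is $\E\|BAx\|_2^2 \le \|B\|_{\HS}^2 \le n$, which uses $\|B\|$ through the Hilbert--Schmidt norm of $B$, not just operator-norm contractivity. Second, the claim that the $(4+\e)$-th moment (rather than $4$th) is "consumed in the balancing of truncation level against net entropy" is roughly the right instinct for the almost-square piece, but the $(4+\e)$-th moment is also used to verify the boundedness hypothesis $\max_{i,j}|a_{ij}| \lesssim (Mn/\log N)^{1/(2+\e/4)}$ w.h.p.\ for almost square $N$ (Lemma~\ref{almost square hypotheses}) and in the truncation step within Theorem~\ref{small columns}; it is not a single balancing act.
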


\smallskip

\begin{remarks}
  {\bf 1.} An important feature of this result is that its conclusion is independent of the dimension $N$. 
  
  {\bf 2.} The proof of Theorem~\ref{main} yields the stronger estimate
  \begin{equation}						\label{B}
    \E \|W\| \le C(\e) (\|B\|\sqrt{n} + \|B\|_\HS)
  \end{equation}
  valid for arbitrary (non-random) $m \times N$ matrix $B$.
  This result is independent of the dimensions of the matrix $B$, 
  and therefore it holds for an arbitrary 
  linear operator $B$ acting from the $N$-dimensional Euclidean space 
  $\ell_2^N$ to an arbitrary Hilbert space.
     
  {\bf 3.} Theorem~\ref{main} can be interpreted in terms of {\em sample covariance matrices} of
  random vectors in $\R^m$ of the form $BX$, where $X$ is a random vector in $\R^N$ with independent entries.
  Indeed, let $A$ be the random matrix whose columns are $n$ independent samples of the vector $X$. 
  Then $W = BA$ is the matrix whose columns are $n$ independent samples of the random vector $BX$.
  The sample covariance matrix of the random vector $BX$ is defined as $\Sigma = \frac{1}{n} WW^*$.
  Theorem~\ref{main} states that the largest eigenvalue of $\Sigma$ is bounded by 
  $C_1(\e) (1 + m/n)$, which is further bounded by $C_2(\e)$ for the number of samples $n \gtrsim m$
  (and independently of the dimension $N$). 
  This problem was previously studied in \cite{BS 98}, \cite{BS 99} in the limit regime for $m=N$, 
  where the result must of course depend on $N$.
    
  {\bf 4.} Under the stronger subgaussian moment assumption \eqref{subgaussian} on the entries, 
  Theorem~\ref{main} is easy to prove using standard concentration and an $\e$-net argument.
  In contrast, if only some finite moment is assumed, we do not know any simple proof. 
\end{remarks}

\subsection{The smallest singular value}
%..............................

Our main motivation for Theorem~\ref{main} was to complete the analysis of the 
{\em smallest singular value} of random rectangular matrices carried out by M.~Rudelson and the author
in \cite{RV}. The smallest singular value $s_{\min}(W)$ of a matrix $W$ can be equivalently 
described as $s_{\min}(W) = \inf_x \|Wx\|_2 / \|x\|_2$. 

Analyzing the smallest singular value is generally 
harder than analyzing the largest one (the spectral norm). 
The analogue of \eqref{optimal norm} for the smallest singular value of 
random $m \times n$ matrices $W$ (for $m > n$) is
\begin{equation}					\label{optimal smin}
  s_{\min}(W) \sim \sqrt{m} - \sqrt{n}.
\end{equation}
The optimal limit version of this result proved in \cite{BY} holds under exactly 
the same hypotheses as \eqref{optimal norm} --
for i.i.d. entries with mean zero, variance $1$ and finite fourth moment.

Many papers addressed \eqref{optimal smin} for fixed dimensions $n$, $m$.
Sufficiently tall matrices ($m \ge Cn$ for sufficiently large $C$) were studied in \cite{BDGJN};
extensions to genuinely rectangular matrices ($m > (1+\e)n$ for some $\e > 0$)
were studied in \cite{LPRT, AFMS, R 06}, with gradually improving dependence on $\e$.
An optimal version of \eqref{optimal smin} for all dimensions was obtained in \cite{RV}.
All these works put somewhat stronger moment assumptions than the fourth moment
of the entries $w_{ij}$ of the matrix $W$. A convenient assumption is that the entries $w_{ij}$ 
are {\em subgaussian} random variables. This means that all their moments 
are bounded by the corresponding moments of the standard normal random variable, i.e.
\begin{equation}						\label{subgaussian}
  (\E |w_{ij}|^p )^{1/p} \le M \sqrt{p} \qquad \text{for all } p \ge 1
\end{equation}
where $M$ is called the subgaussian moment. 
It was proved in \cite{RV} that if the entries of $W$ are i.i.d. mean zero subgaussian random 
variables with unit variance, then for every $t > 0$ one has
\begin{equation}										\label{for subgaussian}
\P \big( s_{\min}(W) \le t \big (\sqrt{m} - \sqrt{n-1} \big ) \big)
\le (C t)^{m-n+1} + e^{-cm}
\end{equation}
where $C, c > 0$ depend only on the subgaussian moment $M$.
In particular, for such matrices we have
\begin{equation}									\label{smin whp}
  s_{\min}(W) \ge c_1(\sqrt{m} - \sqrt{n-1})  \qquad \text{with high probability}
\end{equation}
where $c_1>0$ depends only on the desired probability and the subgaussian moment.
This result encompasses the case of square matrices where $m=n$ 
and hence \eqref{smin whp} yields $s_{\min}(W) \ge c_2/\sqrt{n}$. 
For Gaussian square matrices this optimal bound was obtained in \cite{E 88} and \cite{Sz}; 
for general square matrices a weaker bound $n^{-3/2}$ was obtained in \cite{R square}
and the best bound as above in \cite{RV square}; the estimate is shown to be optimal 
in \cite{RV CRAS}.

Whether \eqref{smin whp} holds under weaker moment assumptions 
was only known in the case of square matrices. 
It was proved in \cite{RV square} using \eqref{non-asymptotic} that \eqref{smin whp} holds
under the fourth moment assumption for square matrices, i.e. for $m=n$. 
Whether the same is true for arbitrary rectangular
matrices under the fourth moment assumption was left open in \cite{RV}. 
The bottleneck of the argument occurred in Proposition~7.3 on \cite{RV} where
we needed a correct bound on the spectral norm of a product of a random matrix and 
a fixed orthogonal projection. 
Such a bound was easy to get only under the subgaussian hypothesis. Theorem~\ref{main} of the 
present paper extends the argument of \cite{RV} for random matrices with bounded
$(4+\e)$-th moment. It follows directly from the argument of \cite{RV} and 
Theorem~\ref{main}.

\begin{corollary}[Smallest singular value]				\label{smin}
  Let $\e \in (0,1)$ and $m \ge n$ be positive integers.
  Let $A$ be a random $m \times n$ matrix whose entries are i.i.d. random variables 
  with mean zero, unit variance and $(4+\e)$-th moment bounded by $M$.
  Then, for every $\d > 0$ there exist $t > 0$ and $n_0$
  which depend only on $\e$, $\d$ and $M$, and such that
  $$
  \P \big( s_{\min}(A) \le t \big (\sqrt{m} - \sqrt{n-1} \big ) \big) 
  \le \d
  \qquad \text{for all } n \ge n_0.
  $$
\end{corollary}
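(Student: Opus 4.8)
The plan is to follow the argument of \cite{RV} for the least singular value essentially verbatim, isolating the one step that genuinely relied on the subgaussian hypothesis and replacing it by Theorem~\ref{main}.

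First I would recall the skeleton of \cite{RV}. One partitions the sphere $S^{n-1}=\mathrm{Comp}\cup\mathrm{Incomp}$ into compressible unit vectors (those within a small constant distance of the set of $c_0n$-sparse vectors) and incompressible ones, and bounds $\inf_{x\in\mathrm{Comp}}\|Ax\|_2$ and $\inf_{x\in\mathrm{Incomp}}\|Ax\|_2$ separately. The compressible part is handled by a union bound over an $\e$-net of sparse directions, using the upper bound $\|A\|\le C(\e)\sqrt m$ (valid with large probability by \eqref{non-asymptotic} and Markov, since $m\ge n$) together with a small-ball estimate for $\|Ax\|_2$ at a fixed sparse $x$; it yields $\inf_{x\in\mathrm{Comp}}\|Ax\|_2\ge c\sqrt m$ outside a small-probability event. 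The incompressible part is controlled, via the invertibility-via-distance machinery of \cite{RV}, by lower bounds for the distances $\mathrm{dist}(A_k,H_k)$, where $A_k$ is the $k$-th column of $A$ and $H_k=\mathrm{span}(A_j:j\ne k)$; one wants $\mathrm{dist}(A_k,H_k)\gtrsim\sqrt{m-n+1}$, which is at least $\sqrt m-\sqrt{n-1}$. Since $\mathrm{dist}(A_k,H_k)=\|P_kA_k\|_2$ with $P_k$ the orthogonal projection onto the random subspace $H_k^\perp$ of dimension $\ge m-n+1$, and since $P_k$ is independent of $A_k$, one conditions on $P_k$ and studies the small-ball behaviour of $\|P_kA_k\|_2$ near $\sqrt{m-n+1}$; this works provided $P_k$ is suitably spread, i.e. has no near-coordinate direction, and the technical core of \cite{RV} is that this ``non-spread'' event for $P_k$ is rare.

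That last point is where a spectral-norm bound for a product of a random matrix and an orthogonal projection is required (Proposition~7.3 of \cite{RV}): one needs $\|QA'\|\le C(\sqrt m+\sqrt n)$ with large probability, where $A'$ is a random matrix with independent entries --- a submatrix of $A$, or of an auxiliary independent copy --- and $Q$ is an orthogonal projection that is either deterministic or independent of $A'$, so that one may condition on it. For subgaussian entries this is immediate from concentration of measure and an $\e$-net of size $(C/\e)^{\mathrm{rank}\,Q}\le(C/\e)^m$; for merely $(4+\e)$-th moments that route is unavailable, because $QA'$ does not have independent entries and so the Seginer/Latala bound \eqref{non-asymptotic} does not apply. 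This is exactly what Theorem~\ref{main}, in the form \eqref{B}, supplies: conditioning on $Q$, one has $\E\|QA'\|\le C(\e)(\sqrt n+\|Q\|_\HS)=C(\e)(\sqrt n+\sqrt{\mathrm{rank}\,Q})\le C(\e)\sqrt m$, and Markov turns this into the required high-probability estimate. This is the only place where Theorem~\ref{main} enters, and obtaining this bound without exponential concentration is the main obstacle; the rest of \cite{RV} is either distribution-free or only mildly sensitive to the moment assumption.

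Finally I would re-run the remaining estimates of \cite{RV}, tracking what is lost in weakening ``subgaussian'' to ``$(4+\e)$-th moment bounded''. The small-ball and L\'evy-concentration inputs are no longer uniformly good: a Berry--Esseen type bound for linear forms $\sum_i v_i(A_k)_i$, and the companion small-ball estimate for compressible vectors, replace the $e^{-cm}$ term in \eqref{for subgaussian} by a term that decays only polynomially in the dimension. Hence one cannot conclude for every $n$. Instead, given $\d$ one first fixes the scale $t=t(\e,\d,M)$ so as to control the $(Ct)^{m-n+1}$-type term, and then takes $n_0=n_0(\e,\d,M)$ large enough that the sum of all the polynomially small error terms stays below $\d$ once $n\ge n_0$. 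This yields precisely the stated estimate, with its dimension threshold.
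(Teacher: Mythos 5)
Your proposal matches the paper's approach essentially point for point: the paper likewise defers the entire argument to \cite{RV}, observing that the only step there which genuinely used the subgaussian hypothesis is Proposition~7.3 (the spectral-norm bound for a product of a random matrix with a fixed orthogonal projection), and that Theorem~\ref{main}, in the form \eqref{B}, supplies exactly that bound under the $(4+\e)$-th moment assumption. Your explanation of why the $e^{-cm}$ tail in \eqref{for subgaussian} degrades into a dimension threshold $n \ge n_0$ under finite-moment hypotheses is also consistent with the shape of the statement, though the paper leaves this implicit.
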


This result follows by the argument in \cite{RV}, where one considers probability estimates
conditional on the event that the norm of a product $W$ of a random matrix
and a non-random orthogonal projection is small (see \cite[Proposition~7.3]{RV}).

\medskip

After this paper was written, two important related results appeared on the
universality of the smallest singular value in two extreme regimes --
for almost square matrices and for genuinely rectangular matrices. 
One of these results, 
by T.~Tao and V.~Vu \cite{TV universality}
works for square and almost square matrices where the the defect $m - n$ is constant.
It is valid for matrices with i.i.d. entries with mean zero, unit variance and bounded $C$-th moment 
where $C$ is a sufficiently large absolute constant. 
The result states that the smallest singular value of such $m \times n$ matrices $A$ is asymptotically 
the same as of the Gaussian matrix $G$ of the same dimensions and with i.i.d. standard
normal entries. Specifically, 
\begin{multline}
\P \big(m s_{\min}(G)^2 \le t - m^{-c} \big) - m^{-c}
\le \P \big(m s_{\min}(A)^2 \le t \big) \\
\le \P \big(m s_{\min}(G)^2 \le t + m^{-c} \big) + m^{-c}.
\end{multline}
This universality result, combined with the known asymptotic estimates 
of the smallest singular value of Gaussian matrices $s_{\min}(G)$
allows one to obtain bounds sharper than in Corollary~\ref{smin}. 
However, the universality result of \cite{TV universality} is only known in the almost square 
regime $m - n = O(1)$ (and under stronger moment assumptions), 
while Corollary~\ref{smin} is valid for all dimensions $m \ge n$.

Another recent universality result was obtained by O.~Feldheim and S.~Sodin \cite{FS}
for genuinely rectangular matrices, i.e. with aspect ratio $m/n$ separated from $1$ by a constant, 
and with subgaussian i.i.d. entries. In particular they proved the inequality
\begin{equation}										\label{FS}
\P \big(s_{\min}(A) \le (\sqrt{m}-\sqrt{n})^2 - t m \big)
\le \frac{C}{1-\sqrt{m/n}} \exp( -c n t^{3/2} ).
\end{equation}
Deviation inequalities \eqref{for subgaussian} and \eqref{FS} complement each other --
the former is multiplicative (and is valid for arbitrary dimensions) while the latter is 
additive (and is applicable for genuinely rectangular matrices). 
Each of these two inequalities clearly has the regime where it is stronger.

\medskip

\subsection{Outline of the argument}
%..............................

Let us sketch the proof of Theorem~\ref{main}. We can assume 
that $m=n$ by adding an appropriate number of zero columns to $A$ or rows to $B$.
Since the columns of $A$ are independent, the columns $X_1, \ldots, X_n$ of the matrix $W$ 
are independent random vectors in $\R^n$. 
We would like to bound the spectral norm of 
$WW^* = \sum_j X_j \otimes X_j$, which is a sum of independent random operators.
For random vectors $X_j$ uniformly 
distributed in convex bodies, deviation inequalities for sums $\sum_j X_j \otimes X_j$
were studied in \cite{KLS, Bo, R 99, GHT, Pa, Au, ALPT}. 
For general distributions, a sharp estimate for such sums has been 
proved by M.~Rudelson \cite{R 99}.
This approach, which we develop in Section~\ref{s: rudelson}, leads us to the bound
\begin{equation}						\label{intro up to log}
  \E \|W\| \le C \sqrt{n \log n}.
\end{equation}
This bound is already independent of the dimension $N$, but is off by $\sqrt{\log n}$ 
from being optimal. The logarithmic term is unfortunately a limitation of this method.
This term comes from M.~Rudelson's result, Theorem~\ref{rudelson} below, where it is
needed in full generality. It would be useful to understand the situations where the logarithmic 
term can be removed from M.~Rudelson's theorem. 
So far, only one such situation is known from \cite{ALPT}
where the independent random vectors $X_j$ are uniformly distributed in a 
convex body.

In absence of a suitable variant of M.~Rudelson's theorem without the logarithmic term, 
the rest of our argument will proceed to remove this term from \eqref{intro up to log}
using the rich independence structure, which is inherited by the vectors $X_j$ 
from the random matrix $A$. However, the independence structure is encoded nontrivially via the linear
transformation $B$, which makes the entries of $X_j$ dependent).
A more delicate application of M.~Rudelson's theorem allows one to transfer the logarithmic 
term from the conclusion to the assumption. Namely, Theorem~\ref{small columns} 
establishes the optimal bound $\E \|W\| \le C \sqrt{n}$ in the case when all columns of 
$B$ are logarithmically small, i.e. their Euclidean norm is at most $\log^{-O(1)} n$.
While some columns of a general matrix $B$ may be large, the boundedness of $B$ implies that 
most columns are always logarithmically small -- all but all but $n \log^{O(1)} n$ of them.
So, we can remove from $B$ the already controlled small columns, which will make $B$ 
an almost square matrix. In other words, we can assume hereafter 
that $N = n \log^{O(1)} n$.

The advantage of almost square matrices is that the magnitude of their entries is easy to control.
A simple consequence of the $(4+\e)$-th moment hypothesis and Markov's inequality yields that 
the entries of $A = (a_{ij})$ satisfy $\max_{i,j} |a_{ij}| \le \sqrt{n}$ with high probability.
Note that the same estimate holds for square matrices ($N=n$) under the fourth moment assumption. 
So, in regard to the magnitude of entries, almost square matrices are similar
to exactly square matrices, for which the desired bound follows from R.~Latala's result \eqref{non-asymptotic}. 

This prompts us to construct the proof of Theorem~\ref{main} for almost square matrices
similarly to R.~Latala's argument in \cite{La}, i.e. using fairly standard concentration of measure
results in the Gauss space, coupled with delicate constructions of nets. 
We first decompose $A$ into a sum of matrices which contain entries of similar magnitude. 
As the magnitude increases, these matrices become sparser. This quickly reduces the problem
to random sparse matrices, whose entries are i.i.d. random variables valued in $\{-1,0,1\}$.
The spectral norm of random sparse matrices was studied in \cite{Kh} as a development 
of the work of Z.~Furedi and J.~Komlos \cite{FK}.
However, we need to bound the spectral norm of the matrix $W = BA$ rather than $A$. 
Independence of entries is not available for $W$, 
which makes it difficult to use the known combinatorial methods based 
on the bounding trace of high powers of $W$.

To summarize, at this point we have an almost square random sparse matrix $A$, and we need to bound
the spectral norm of $W = BA$, which is $\|W\| = \sup_x \|Wx\|_2$, where the supremum is over all unit
vectors $x \in \R^n$. The well known method is to first fix $x$ and bound $\|Wx\|_2$ with high probability; 
then take a union bound over all $x$ in a sufficiently fine net of the unit sphere of $\R^n$.
However, a probability bound for every fixed vector $x$, which follows from standard concentration inequalities,
is not strong enough to make this method work. 
{\em Sparse vectors} -- those which have few but large nonzero coordinates --
produce worse concentration bounds than {\em spread vectors}, 
which have many but small nonzero coordinates.
What helps us is that there are fewer sparse vectors on the sphere than there are spread vectors.
This leads to a tradeoff between concentration and entropy, i.e. between 
the probability with which $\|Wx\|_2$ is nicely bounded, and the size of a net for the 
vectors $x$ which achieve this probability bound.
One then divides the unit Euclidean sphere in $\R^n$ into classes of vectors according 
to their ``sparsity'', and uses the entropy-concentration tradeoff for each class separately.
This general line is already present in Latala's argument \cite{La}, and it was developed extensively
in the recent years, see e.g. \cite{LPRT, R square, RV square, RV}.
This argument is presented in Section~\ref{s: concentration}, 
where it leads to a useful estimate for norms of sparse matrices, Corollary~\ref{norm sparse matrices}.
With this in hand, one can quickly finish the proof of Theorem~\ref{main}.

\subsection*{Acknowledgement}
%..............................
The author is grateful for the referee for careful reading of the manuscript,
and for many suggestions which greatly improved the presentation.

\section{Preliminaries}                         \label{s: prelims}
%----------------------------------------------

\subsection{Notation}
%..............................

Throughout the paper, the results are stated and proved over the field of real numbers. 
They are easy to generalize to complex numbers.

We denote by $C, C_1, c, c_1 \ldots$ positive absolute constants,
and by $C(\e), C_1(\e), \ldots$ positive quantities that may depend only on the parameter $\e$.
Their values can change from line to line.

The standard inner product in $\R^n$ is denoted $\< x, y \> $. For a vector $x \in \R^n$, we
denote the cardinality of its support by $\|x\|_0 = |\{ j : x_j \ne 0\}|$, the Euclidean norm by 
$\|x\|_2 = ( \sum_j x_j^2 )^{1/2}$, and the sup-norm by $\|x\|_\infty = \max_j |x_j|$.
The unit Euclidean ball in $\R^n$ is denoted by $B_2^n = \{ x : \|x\|_2 \le 1\}$, and 
the unit Euclidean sphere in $\R^n$ is denoted by $S^{n-1} = \{ x : \|x\|_2 = 1\}$.

The tensor product of vectors $x, y \in \R^n$ is the linear operator $x \otimes y$ 
on $\R^n$ defined as $(x \otimes y)(z) = \< x, z\> y$ for $z \in \R^n$.

\subsection{Concentration of measure}
%..............................

The method that we carry out in Section~\ref{s: concentration} uses concentration in the Gauss space
in combination with constructions of $\e$-nets. Here we recall some basic facts we need.

The standard Gaussian random vector $g \in \R^m$ is a random vector whose coordinates are
independent standard normal random variables. The following concentration inequality can be found
e.g. in \cite[inequality~(1.5)]{LT}.

\begin{theorem}[Gaussian concentration]         \label{gauss concentration}
  Let $f : \R^m \to \R$ be a Lipschitz function.
  Let $g$ be a standard Gaussian random vector in $\R^m$.
  Then for every $t > 0$ one has
  $$
  \P ( f(g) - \E f(g) > t ) \le \exp(-c_0 t^2 / \|f\|_\Lip^2) 
  $$
  where $c_0 \in (0,1)$ is an absolute constant.
\end{theorem}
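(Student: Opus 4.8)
The plan is to establish the stronger fact that $f(g)-\E f(g)$ is subgaussian at the level of its moment generating function, and then read off the tail bound from the exponential Markov inequality. First I would reduce to the case of a smooth function: convolving $f$ with a narrow centered Gaussian density produces functions $f_\delta \in C^\infty(\R^m)$ with $\|\nabla f_\delta(x)\|_2 \le \|f\|_\Lip$ for all $x$ and $f_\delta \to f$ uniformly on compact sets, so the general case follows by dominated convergence once one notes that a Lipschitz function has at most linear growth while $\|g\|_2$ is integrable with Gaussian tails. Rescaling, we may assume $\|f\|_\Lip = 1$ and $f \in C^\infty$ with $\|\nabla f\|_2 \le 1$.

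Fix $\l > 0$ and let $g'$ be an independent copy of $g$. Since $\E f(g') = \E f(g)$, Jensen's inequality gives
\[
  \E \exp\!\big( \l ( f(g) - \E f(g) ) \big) \le \E \exp\!\big( \l ( f(g) - f(g') ) \big),
\]
where the right-hand expectation is over the pair $(g,g')$. The engine of the proof is the Gaussian interpolation $g(\theta) = g\sin\theta + g'\cos\theta$, $\theta \in [0,\pi/2]$: one checks that $g(\theta)$ and $g'(\theta) := g\cos\theta - g'\sin\theta = \frac{d}{d\theta} g(\theta)$ are, for each fixed $\theta$, independent standard Gaussian vectors, while $g(\pi/2) = g$ and $g(0) = g'$. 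The fundamental theorem of calculus and the chain rule then give
\[
  f(g) - f(g') = \int_0^{\pi/2} \< \nabla f(g(\theta)), g'(\theta) \> \, d\theta,
\]
and applying Jensen's inequality with respect to the uniform probability measure on $[0,\pi/2]$ yields
\[
  \exp\!\big( \l (f(g) - f(g')) \big) \le \frac{2}{\pi} \int_0^{\pi/2} \exp\!\Big( \tfrac{\pi}{2}\l \< \nabla f(g(\theta)), g'(\theta) \> \Big) \, d\theta.
\]

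Now take expectations, use Fubini, and for each fixed $\theta$ condition on $g(\theta)$: then $\< \nabla f(g(\theta)), g'(\theta) \>$ is a centered Gaussian variable with variance $\|\nabla f(g(\theta))\|_2^2 \le 1$, so its exponential moment is at most $\exp(\pi^2 \l^2/8)$. Hence $\E \exp(\l(f(g)-\E f(g))) \le \exp(\pi^2\l^2/8)$ for every $\l > 0$, and the exponential Markov inequality gives $\P(f(g)-\E f(g) > t) \le \exp(-\l t + \pi^2\l^2/8)$; optimizing at $\l = 4t/\pi^2$ yields the claim with $c_0 = 2/\pi^2 \in (0,1)$.

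The only genuinely delicate point is the reduction to smooth $f$ — one must verify that mollification does not increase the Lipschitz constant and that all the relevant expectations pass to the limit — but this is routine. An alternative route, which sidesteps mollification entirely, is to invoke the Gaussian isoperimetric inequality: applied to the sublevel set $\{ f \le M \}$ at a median $M$ of $f(g)$, it gives, for $1$-Lipschitz $f$, $\P(f(g) > M + t) \le 1 - \Phi(t) \le \tfrac12 e^{-t^2/2}$, and then bounding $|M - \E f(g)|$ by an absolute multiple of $\|f\|_\Lip$ (for instance by integrating the median deviation estimate) converts this into the stated bound about the mean, with a different absolute constant $c_0$ of the same form.
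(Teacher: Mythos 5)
Your proof is correct. Note, however, that the paper does not prove this statement at all --- it is quoted directly from Ledoux--Talagrand, inequality~(1.5), so there is no in-paper argument to compare against. You have supplied the standard Maurey--Pisier interpolation proof: smooth by mollification, symmetrize via an independent copy and Jensen to reduce the mean to a difference, write that difference as an integral along the circular interpolation $g(\theta)=g\sin\theta+g'\cos\theta$, use that $g(\theta)$ and $\tfrac{d}{d\theta}g(\theta)$ are independent standard Gaussians, and bound the resulting conditional Gaussian MGF by $\exp(\pi^2\lambda^2/8)$ using $\|\nabla f\|_2\le 1$. All the steps check out, including the optimization giving $c_0=2/\pi^2$. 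Your alternative sketch via the Gaussian isoperimetric inequality applied to a sublevel set at the median, followed by a median-to-mean adjustment, is actually the route taken in the Ledoux--Talagrand reference the paper cites. The two routes buy different things: isoperimetry yields the sharp constant $1/2$ in the median form $\P(f(g)>M+t)\le \tfrac12 e^{-t^2/2}$ and is dimension-free by construction, but requires the deep isoperimetric theorem of Borell and Sudakov--Tsirelson; the interpolation argument you carried out is self-contained, elementary (only calculus, Fubini, Jensen, and the Gaussian rotation invariance), and gives a clean subgaussian MGF bound directly about the mean, at the cost of a worse absolute constant. Either is perfectly adequate here, since the paper only uses the inequality with an unspecified absolute constant $c_0$.
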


As a very restrictive but useful example, Theorem~\ref{gauss concentration} implies
the following deviation inequality for sums of independent exponential random variables $g_i^2$
(which can also be derived by the more standard approach via moment generating functions).

\begin{corollary}[Sums of exponential random variables]      \label{exponential deviation}
  Let $d = (d_1,\ldots,d_m)$ be a vector of real numbers,
  and let $g_1,\ldots,g_m$ be independent standard normal random variables.
  Then, for every $t >0$ we have
  $$
  \P \Big\{ \Big( \sum_{i=1}^m d_i^2 g_i^2 \Big)^{1/2} > \|d\|_2 + t \Big\}
  \le \exp(-c_0 t^2 / \|d\|_\infty^2).
  $$
\end{corollary}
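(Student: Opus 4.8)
The plan is to deduce Corollary~\ref{exponential deviation} directly from the Gaussian concentration inequality, Theorem~\ref{gauss concentration}, by choosing the right Lipschitz function. Define $f : \R^m \to \R$ by $f(x) = \big( \sum_{i=1}^m d_i^2 x_i^2 \big)^{1/2} = \|D x\|_2$, where $D = \mathrm{diag}(d_1,\ldots,d_m)$. Then $f$ is a norm composed with a linear map, hence convex, and for $x, y \in \R^m$ the reverse triangle inequality gives $|f(x) - f(y)| \le \|D(x-y)\|_2 \le \|D\| \, \|x-y\|_2 = \|d\|_\infty \|x-y\|_2$, so $\|f\|_{\Lip} \le \|d\|_\infty$. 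Applying Theorem~\ref{gauss concentration} to this $f$ and the standard Gaussian vector $g = (g_1,\ldots,g_m)$ yields, for every $s > 0$,
\[
  \P\big( f(g) - \E f(g) > s \big) \le \exp(-c_0 s^2 / \|d\|_\infty^2).
\]

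Next I would bound the expectation $\E f(g)$ from above by $\|d\|_2$. By Jensen's inequality (concavity of $\sqrt{\cdot}$),
\[
  \E f(g) = \E \Big( \sum_i d_i^2 g_i^2 \Big)^{1/2} \le \Big( \E \sum_i d_i^2 g_i^2 \Big)^{1/2} = \Big( \sum_i d_i^2 \Big)^{1/2} = \|d\|_2,
\]
using $\E g_i^2 = 1$. Combining the two displays: on the event $\{ f(g) > \|d\|_2 + t \}$ we have in particular $f(g) - \E f(g) > t$ (since $\E f(g) \le \|d\|_2$), so
\[
  \P\big\{ f(g) > \|d\|_2 + t \big\} \le \P\big\{ f(g) - \E f(g) > t \big\} \le \exp(-c_0 t^2 / \|d\|_\infty^2),
\]
which is exactly the claimed inequality. (If $\|d\|_\infty = 0$ the statement is trivial, so we may assume $\|d\|_\infty > 0$.)

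There is no real obstacle here; the only mild subtlety is making sure the two approximations point the same way, namely that replacing $\E f(g)$ by the larger quantity $\|d\|_2$ only weakens the event and hence is legitimate inside the probability bound. Everything else is a routine verification of the Lipschitz constant and a one-line Jensen estimate. The alternative route mentioned in the statement — computing the moment generating function of $\sum_i d_i^2 g_i^2$, a weighted sum of independent $\chi^2$ variables, and optimizing the Chernoff bound — would also work and gives essentially the same dependence on $\|d\|_2$ and $\|d\|_\infty$, but the concentration-of-measure argument above is shorter and keeps the constant $c_0$ consistent with the one appearing throughout Section~\ref{s: concentration}.
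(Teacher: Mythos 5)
Your proof is correct and follows exactly the same route as the paper: take $f(y)=(\sum_i d_i^2 y_i^2)^{1/2}$, observe $\|f\|_{\Lip}\le\|d\|_\infty$, bound $\E f(g)\le\|d\|_2$ by Jensen (the paper cites H\"older, same estimate), and apply Theorem~\ref{gauss concentration}. No discrepancies.
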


\begin{proof}
The function $f(y) =  (\sum_{i=1}^m d_i^2 y_i^2)^{1/2}$ is a Lipschitz function on $\R^m$
with $\|f\|_\Lip = \|d\|_\infty$. Moreover, H\"older's inequality implies that
$$
\E f(g) = \E \Big( \sum_{i=1}^m d_i^2 g_i^2 \Big)^{1/2}
\le \Big( \E \sum_{i=1}^m d_i^2 g_i^2 \Big)^{1/2}
= \|d\|_2.
$$
Theorem~\ref{gauss concentration} completes the proof.
\end{proof}

\medskip

Another classical deviation inequality we will need is Bennett's inequality,
see e.g. \cite[Theorem~2]{BBL}:

\begin{theorem}[Bennett's inequality]           \label{bennett}
  Let $X_1,\ldots,X_N$ be independent mean zero random variables 
  such that $|X_i| \le 1$ for all $i$.
  Consider the sum $S = X_1 + \cdots + X_N$ and let $\s^2 := \Var(S)$.
  Then, for every $t > 0$ we have
  $$
  \P (S > t) \le \exp \big( -\s^2 h(t/\s^2) \big)
  $$
  where $h(u) = (1+u) \log(1+u) - u$.
\end{theorem}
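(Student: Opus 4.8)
The plan is to run the classical Cram\'er--Chernoff exponential moment method. For a parameter $\lambda > 0$, Markov's inequality applied to $e^{\lambda S}$ gives $\P(S > t) \le e^{-\lambda t}\, \E e^{\lambda S}$, and by independence $\E e^{\lambda S} = \prod_{i=1}^N \E e^{\lambda X_i}$. Everything then reduces to a sharp bound on the single-variable moment generating functions $\E e^{\lambda X_i}$, followed by an optimization over $\lambda$.

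The key elementary fact I would use is that the function $\phi(s) = (e^s - 1 - s)/s^2$, with $\phi(0) := 1/2$, is nondecreasing on $\R$ (this follows by differentiating, or by observing $\phi(s) = \sum_{j \ge 0} s^j/(j+2)!$ and checking the sign of $\phi'$). Writing $e^{\lambda X_i} = 1 + \lambda X_i + (\lambda X_i)^2 \phi(\lambda X_i)$ and using $|\lambda X_i| \le \lambda$ (since $\lambda > 0$ and $|X_i| \le 1$), monotonicity of $\phi$ yields $e^{\lambda X_i} - 1 - \lambda X_i \le (e^\lambda - 1 - \lambda) X_i^2$. Taking expectations and using $\E X_i = 0$ gives $\E e^{\lambda X_i} \le 1 + (e^\lambda - 1 - \lambda)\, \E X_i^2 \le \exp\!\big( (e^\lambda - 1 - \lambda)\, \E X_i^2 \big)$. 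Multiplying over $i$ and using $\sum_i \E X_i^2 = \sum_i \Var(X_i) = \Var(S) = \s^2$, we obtain $\E e^{\lambda S} \le \exp\!\big( \s^2 (e^\lambda - 1 - \lambda) \big)$, and hence $\P(S > t) \le \exp\!\big( -\lambda t + \s^2(e^\lambda - 1 - \lambda) \big)$ for every $\lambda > 0$.

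It remains to choose $\lambda$ optimally. Differentiating the exponent $-\lambda t + \s^2(e^\lambda - 1 - \lambda)$ in $\lambda$ shows that the minimum is attained at $\lambda^\ast = \log(1 + t/\s^2) > 0$, where $e^{\lambda^\ast} - 1 = t/\s^2$. Substituting this value and writing $u = t/\s^2$, the exponent simplifies to $\s^2\big[ u - (1+u)\log(1+u) \big] = -\s^2 h(u)$ with $h(u) = (1+u)\log(1+u) - u$, which is exactly the asserted bound $\P(S > t) \le \exp(-\s^2 h(t/\s^2))$.

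The only ingredient that is not pure bookkeeping is the monotonicity of $\phi$, which is what delivers the \emph{exact} one-variable constant $e^\lambda - 1 - \lambda$; with that in hand, the Chernoff step, the tensorization, and the optimization are routine. Accordingly, the main point to be careful about is stating the per-variable exponential estimate sharply enough, so that the final optimization reproduces the function $h$ rather than a weaker Bernstein-type surrogate.
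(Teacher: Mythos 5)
Your proof is correct, and it is the classical Cram\'er--Chernoff derivation of Bennett's inequality: bound the moment generating function of each $X_i$ using the monotonicity of $\phi(s) = (e^s - 1 - s)/s^2$ (which is exactly what yields the sharp per-variable constant $e^\lambda - 1 - \lambda$ rather than a weaker Bernstein surrogate), tensorize by independence, and optimize the Chernoff bound at $\lambda^\ast = \log(1 + t/\sigma^2)$. The paper does not give its own proof of this statement---it cites it as a known result from Boucheron--Bousquet--Lugosi \cite{BBL}---and the argument you present is precisely the standard one found there and elsewhere, so this is ``the same approach'' to the extent a comparison is possible. All the individual steps check out: the monotonicity of $\phi$ (your differentiation or power-series argument both work), the inequality $\E e^{\lambda X_i} \le \exp((e^\lambda - 1 - \lambda)\E X_i^2)$, the identity $\sum_i \E X_i^2 = \Var(S) = \sigma^2$ from independence and mean zero, and the final optimization giving $-\sigma^2 h(t/\sigma^2)$. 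One tiny point of precision: what you actually need from $|X_i| \le 1$ to invoke monotonicity of $\phi$ is the one-sided bound $\lambda X_i \le \lambda$, i.e.\ $X_i \le 1$; writing $|\lambda X_i| \le \lambda$ is harmless but slightly obscures that only the upper bound is used (and indeed, Bennett's inequality in this one-sided form only requires $X_i \le 1$, not $|X_i| \le 1$).
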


\medskip

We will also need M.~Talagrand's concentration inequality for convex Lipschitz funcitons
from \cite[Theorem~6.6]{Ta}; see also \cite[Corollary~4.10]{Le} and the discussion below it. 

\begin{theorem}[Concentration of Lipschitz convex functions]								\label{talagrand}
  Let $X_1,\ldots,X_m$ be independent random variables such that 
  $|X_i| \le K$ for all $i$.
  Let $f : \R^m \to \R$ be a convex and $1$-Lipschitz function. 
  Then for every $t > 0$ one has
  $$
  \P \big( |f(X_1,\ldots,X_m) - \E f(X_1,\ldots,X_m)| > K t \big) \le 4 \exp(-t^2/4).
  $$
\end{theorem}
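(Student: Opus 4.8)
The plan is to derive Theorem~\ref{talagrand} from Talagrand's convex distance inequality, which is the combinatorial core of the statement. Equip the product space $[-K,K]^m$ with the law of $X=(X_1,\dots,X_m)$; for a point $x$ and a measurable set $A$ define the convex distance
\[
  d_T(x,A) \;=\; \sup_{\substack{\a \in \R^m,\ \a \ge 0 \\ \|\a\|_2 \le 1}}\ \inf_{y \in A}\ \sum_{i\,:\,x_i \ne y_i} \a_i .
\]
By linear programming duality this equals $\mathrm{dist}\big(0,\ \mathrm{conv}\,\{(\one_{x_i \ne y_i})_{i=1}^m : y \in A\}\big)$, the Euclidean distance from the origin to a convex hull of Hamming indicator vectors, which live in the finite set $\{0,1\}^m$. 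The inequality to establish is the exponential-moment bound
\[
  \E \exp\!\big( d_T(X,A)^2/4 \big) \;\le\; 1/\P(X \in A),
\]
from which Markov's inequality gives $\P(X\in A)\,\P\big(d_T(X,A)\ge t\big) \le e^{-t^2/4}$ for all $t>0$. I would prove this by induction on $m$. The base case $m=1$ reduces to the elementary scalar inequality $\inf_{0\le\theta\le1} r^{-\theta}\exp\big((1-\theta)^2/4\big) \le 2-r$ for $r\in(0,1]$, applied with $r=\P(X_1\in A)$. For the inductive step one slices $A$ along the last coordinate, writes a trial weight vector as $\a = ((1-\theta)\b,\theta)$ to interpolate between the slice $A_{x_m}$ and the projection of $A$ onto the first $m-1$ coordinates, applies the inductive hypothesis in dimension $m-1$, and combines the two factors via H\"older's inequality and the scalar inequality after optimizing over $\theta$. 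This inductive step is the part I expect to be the genuine obstacle, since the interpolation and the choice of $\theta$ must be arranged so that the two exponential moments multiply correctly.

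Next I would convert the convex distance into a bound on $f$, using that sublevel sets of a convex function are convex (hence closed and measurable). Fix $a\in\R$ and set $A=\{x : f(x)\le a\}$. Given $x$, let $v$ be the minimal-norm point of $\mathrm{conv}\,\{(\one_{x_i\ne y_i})_i : y\in A\}$, so $\|v\|_2 = d_T(x,A)$ and, by Carath\'eodory, $v=\sum_j\l_j(\one_{x_i\ne y^{(j)}_i})_i$ with $y^{(j)}\in A$, $\l_j\ge0$, $\sum_j\l_j=1$. Since $A$ is convex, $z:=\sum_j\l_j y^{(j)}\in A$, so $f(z)\le a$; and coordinatewise $|x_i-z_i|\le\sum_{j:\,x_i\ne y^{(j)}_i}\l_j\,|x_i-y^{(j)}_i|\le 2K\,v_i$, hence $\|x-z\|_2\le 2K\,d_T(x,A)$. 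Because $f$ is $1$-Lipschitz, $f(x)\le f(z)+\|x-z\|_2\le a+2K\,d_T(x,A)$. Thus, for every $s>0$, the event $\{f(X)>a+2Ks\}$ is contained in $\{d_T(X,A)>s\}$.

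Finally I would run this twice around a median $M$ of $f(X)$. Applying the last inclusion with $a=M$ and $\P(X\in A)\ge1/2$ gives $\P\big(f(X)>M+2Ks\big)\le 2e^{-s^2/4}$. Superlevel sets need not be convex, so for the lower tail I instead take $B=\{f\le M-2Ks\}$, which is convex: the lemma gives $f(y)\le M-2Ks+2K\,d_T(y,B)$ for all $y$, so $\{f(X)\ge M\}\subseteq\{d_T(X,B)\ge s\}$ and therefore $1/2\le\P\big(d_T(X,B)\ge s\big)\le e^{-s^2/4}/\P(X\in B)$, i.e.\ $\P\big(f(X)\le M-2Ks\big)\le 2e^{-s^2/4}$. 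Together these yield $\P\big(|f(X)-M|>2Ks\big)\le 4e^{-s^2/4}$. Replacing the median by the mean via $|\E f(X)-M|\le\E|f(X)-M|=\int_0^\infty\P(|f(X)-M|>u)\,du\le cK$ and absorbing this $O(K)$ shift then produces a sub-Gaussian bound $\P\big(|f(X)-\E f(X)|>Kt\big)\le C e^{-c t^2}$ of the shape asserted in Theorem~\ref{talagrand}; the precise numerical constants come from optimizing the scalar inequality (where the bound $2-r$ is essentially sharp) and the median-to-mean step, and only the Gaussian-type decay is used in the sequel.
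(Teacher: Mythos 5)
The paper does not prove this statement; it simply cites Talagrand \cite[Theorem~6.6]{Ta} and Ledoux \cite[Corollary~4.10]{Le}, so there is no internal proof to compare against. What you have written is a correct self-contained sketch of the standard proof of Talagrand's convex concentration inequality: establish the convex-distance inequality $\E \exp(d_T(X,A)^2/4) \le 1/\P(X\in A)$ by induction on dimension, use Carath\'eodory plus convexity of the sublevel set and the $1$-Lipschitz bound to convert $d_T(x,A)$ into control of $f(x)$ (the factor $2K$ from $|x_i - z_i| \le 2K\,v_i$ is right on $[-K,K]$), and then run the two-sided argument around a median. All of these steps are sound, including the observation that superlevel sets are not convex, which is why the lower tail must be handled through $B=\{f \le M-2Ks\}$ rather than directly.

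Two remarks on constants, since you flagged them yourself. First, Ledoux's Corollary~4.10 gives $\P(|f - m_f|\ge t)\le 4e^{-t^2/4}$ around the \emph{median} for variables valued in $[0,1]$; with $|X_i|\le K$ the coordinate diameter is $2K$, so your scheme gives $\P(|f-M|>2Ks)\le 4e^{-s^2/4}$, i.e.\ $4e^{-t^2/16}$ in the $Kt$ normalization of the statement, not $4e^{-t^2/4}$. Second, the median-to-mean step costs an additive $O(K)$ shift and further loosens the constants. So as written, and as you acknowledge, the argument delivers $\P(|f(X)-\E f(X)|>Kt)\le C e^{-ct^2}$ with unspecified absolute $C,c$, which is the shape the paper actually uses but not the literal $4e^{-t^2/4}$ printed in the statement. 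This is a discrepancy in the paper's own bookkeeping rather than a defect of your proof; for the applications in Section~\ref{s: concentration} (Corollary~\ref{small aij tail} and Corollary~\ref{almost square}) only Gaussian-type decay with some absolute constants is needed.

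One small point worth making precise if you were to write this out in full: when you invoke Sion's minimax theorem to identify $d_T(x,A)$ with $\mathrm{dist}(0,\mathrm{conv}\{(\one_{x_i\ne y_i})_i : y\in A\})$, you should note that $A$ need not be finite, so the convex hull is of a possibly infinite set of $\{0,1\}$-vectors; compactness still holds because this set lives inside the finite cube $\{0,1\}^m$, so the hull is a polytope and the minimum-norm point $v$ you use with Carath\'eodory exists.
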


\subsection{Nets}
%..............................

Consider a subset $U$ of a normed space $X$, and let $\e > 0$.
Recall that an {\em $\e$-net} of $U$ is a subset $\NN$ of $U$ such that
the distance from any point of $U$ to $\NN$ is at most $\e$.
In other words, for every $x \in U$ there exists $y \in \NN$ such that $\|x-y\|_X \le \e$.

The following estimate follows by a volumetric argument, see e.g. the proof of 
Lemma~9.5 in \cite{LT}.

\begin{lemma}[Cardinality of $\e$-nets]           \label{net cardinality}
  Let $\e \in (0,1)$. The unit Euclidean ball $B_2^n$ and the unit Euclidean sphere
  $S^{n-1}$ in $\R^n$ both have $\e$-nets of cardinality at most $(1 + 2/\e)^n$.
\end{lemma}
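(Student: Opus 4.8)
The plan is to establish the standard volumetric bound on the cardinality of $\e$-nets, treating both $B_2^n$ and $S^{n-1}$ at once. I will produce a net by a greedy maximal-separation argument: choose $\NN = \{y_1, y_2, \ldots\}$ to be a maximal subset of $U$ (where $U$ is either $B_2^n$ or $S^{n-1}$) with the property that $\|y_i - y_j\|_2 > \e$ for all $i \ne j$. Maximality immediately forces $\NN$ to be an $\e$-net: if some $x \in U$ had distance greater than $\e$ to every point of $\NN$, we could adjoin $x$ and contradict maximality.

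\emph{Key steps.} First, the separation property gives that the balls $y_i + (\e/2) B_2^n$ are pairwise disjoint. Second, since each $y_i \in U \subseteq B_2^n$, all these small balls are contained in the slightly enlarged ball $(1 + \e/2) B_2^n$. Comparing volumes, $|\NN| \cdot \mathrm{vol}\big((\e/2) B_2^n\big) \le \mathrm{vol}\big((1+\e/2) B_2^n\big)$, and since volume in $\R^n$ scales as the $n$-th power of the radius, this yields
$$
|\NN| \le \Big( \frac{1 + \e/2}{\e/2} \Big)^n = \Big( 1 + \frac{2}{\e} \Big)^n.
$$
For the sphere one uses the same argument verbatim, noting $S^{n-1} \subseteq B_2^n$, so the enclosing set $(1+\e/2)B_2^n$ and the volume comparison are identical.

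\emph{Main obstacle.} There is no serious obstacle here; this is a routine packing argument. The one point requiring a word of care is that for $S^{n-1}$ the net $\NN$ must itself lie on the sphere (as required by our definition of $\e$-net, where $\NN \subseteq U$), so one cannot simply quote the ball case and project; instead one runs the maximal-separation construction directly inside $S^{n-1}$, which works without change because the disjoint small balls $y_i + (\e/2)B_2^n$ still sit inside $(1+\e/2)B_2^n$. Alternatively one can cite the volumetric estimate in the proof of Lemma~9.5 of \cite{LT} as indicated. I would present the greedy argument explicitly since it is short and self-contained.
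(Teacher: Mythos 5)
Your proof is correct and is precisely the standard volumetric packing argument that the paper invokes by reference to Lemma~9.5 of \cite{LT}, so it matches the intended approach. The greedy maximal $\e$-separated set, the disjointness of the $(\e/2)$-balls, their containment in $(1+\e/2)B_2^n$, and the resulting volume comparison are all handled correctly, and your remark about running the construction directly inside $S^{n-1}$ so that $\NN \subseteq U$ is exactly the right point of care.
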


When computing norms of linear operators, $\e$-nets provide a convenient discretization
of the problem. We formalize it in the next proposition.

\begin{proposition}[Computing norms on nets]     \label{norm on nets}
  Let $A : X \to Y$ be a linear operator between normed spaces $X$ and $Y$,
  and let $\NN$ be an $\e$-net of either the unit sphere $S(X)$ or the unit ball $B(X)$ of $X$ 
  for some $\e \in (0,1)$.
  Then
  $$
  \|A\| \le \frac{1}{1-\e} \sup_{x \in \NN} \|Ax\|_Y.
  $$
\end{proposition}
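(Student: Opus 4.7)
The plan is to prove both cases (sphere and ball $\varepsilon$-net) by the standard approximation argument: write an arbitrary unit vector as a net point plus a small error, bound $A$ on the error by $\|A\|$ itself, and rearrange. Set $M := \sup_{x \in \NN} \|Ax\|_Y$, and note that since $\NN$ is contained in the unit ball of $X$, one trivially has $M \le \|A\|$; in particular if $\|A\| = 0$ there is nothing to do, and if $M = \infty$ the statement is vacuous, so we may assume $M < \infty$. In the finite-dimensional setting where this proposition is used, $\|A\| < \infty$ automatically.

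For the sphere case, fix any $x \in S(X)$ and choose $y \in \NN$ with $\|x - y\|_X \le \e$. By the triangle inequality,
\[
\|Ax\|_Y \le \|Ay\|_Y + \|A(x-y)\|_Y \le M + \|A\| \cdot \|x-y\|_X \le M + \e \|A\|.
\]
Taking the supremum over $x \in S(X)$ gives $\|A\| \le M + \e \|A\|$, and since $\e < 1$ and $\|A\|$ is finite we may rearrange to obtain $\|A\| \le M/(1-\e)$, as desired. The ball case is identical: any $x \in B(X)$ is approximated by some $y \in \NN$ with $\|x-y\|_X \le \e$, and the same two-line calculation applies after taking the supremum over $B(X)$ (which also equals $\|A\|$).

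There is no real obstacle here; the only subtlety is making sure the rearrangement step is legitimate, i.e. that $\|A\| < \infty$, which holds in the intended application because $X$ is finite-dimensional. If one wanted to state the proposition for general normed spaces, one could instead pick a sequence $x_n \in S(X)$ with $\|Ax_n\|_Y \to \|A\|$ and approximate each $x_n$ by a net element $y_n$; the inequality $\|Ax_n\|_Y \le M + \e \|A\|$ then passes to the limit and yields the same conclusion whenever $\|A\|$ is finite, with $\|A\| = \infty$ forcing $M = \infty$ as well.
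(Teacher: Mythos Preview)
Your proof is correct and follows essentially the same approach as the paper's: approximate an arbitrary unit vector by a net point, use the triangle inequality to get $\|A\| \le M + \e\|A\|$, and rearrange. Your additional remark about needing $\|A\| < \infty$ for the rearrangement is a point the paper leaves implicit.
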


\begin{proof}
We give the proof for an $\e$-net of the unit sphere; the case of the unit ball is similar.
Every $z \in S(X)$ has the form $z = x + h$, where $x \in \NN$ and $\|h\|_X \le \e$.
Since $\|A\| = \sup_{z \in S(X)} \|Az\|_Y$, the triangle inequality yields
$$
\|A\| \le \sup_{x \in \NN} \|Ax\|_Y + \sup_{\|h\|_X \le \e} \|Ah\|_Y.
$$
The last term in the right hand side is bounded by $\e \|A\|$. Thus we have
shown that
$$
(1-\e)\|A\| \le \sup_{x \in \NN} \|Ax\|_Y.
$$
This completes the proof.
\end{proof}

\subsection{Symmetrization}
%..............................

We will use the standard symmetrization technique as was done in \cite{La};
see more general inequalities in e.g. \cite[Section 6.1]{LT}.
To this end, let the matrices $A = (a_{ij})$ and $B$ be as in Theorem~\ref{main}.
Let $A' =(a'_{ij})$ be an independent copy of $A$, and let $\e_{ij}$ be independent symmetric
Bernoulli random variables.
Then, by Jensen's inequality,
\begin{align*}
\E \|BA\|
  &= \E \|B(A - \E A')\|
  \le \E \|B(A - A')\| \\
  &= \E \|B(\e_{ij} (a_{ij} - a'_{ij}))\|
  \le 2 \E \|B(\e_{ij} a_{ij})\|.
\end{align*}
Therefore, we can assume without loss of generality in Theorem~\ref{main}
that $a_{ij}$ are symmetric random variables.
Furthermore, let $g_{ij}$ be independent standard normal random variables.
Then, again by Jensen's inequality,
\begin{align*}
\E \|B(g_{ij} a_{ij})\|
  &= \E \|B(\e_{ij} |g_{ij}| a_{ij})\|
  \ge \E \|B(\e_{ij} \E(|g_{ij}|) a_{ij})\| \\
  &= (2/\pi)^{1/2} \E \|B(\e_{ij} a_{ij})\|.
\end{align*}
Therefore
\begin{equation}                                \label{making gaussian}
  \E \|BA\| \le  (2\pi)^{1/2} \E \|B(g_{ij} a_{ij})\|.
\end{equation}
Conditioning on $a_{ij}$, we thus reduce the problem to random {\em gaussian} matrices.

\medskip

We will use a similar symmetrization technique several times in our argument. 
In particular, in the proof of Lemma~\ref{max tensor sums}
we apply the following observation, which can be deduced from standard
symmetrization lemma (\cite{LT} Lemma~6.3) and the contraction principle (\cite{LT} Theorem~4.4).
For the reader's convenience we include a direct proof.

\begin{lemma}[Symmetrization]					\label{symmetrization}
  Consider independent mean zero random variables $Z_{ij}$ 
  such that $|Z_{ij}| \le 1$, independent symmetric Bernoulli random variables $\e_{ij}$,
  and vectors $x_{ij}$ in some Banach space, where both $i$ and $j$ range 
  in some finite index sets.
  Then 
  $$
  \E \max_j \Big\| \sum_i Z_{ij} x_{ij} \Big\| 
  \le 2 \E \max_j \Big\| \sum_i \e_{ij} x_{ij} \Big\|.
  $$  
\end{lemma}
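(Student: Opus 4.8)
The plan is to prove the Symmetrization Lemma by the familiar two-step route: first replace the $Z_{ij}$ by a symmetrized version using an independent copy, and then use the contraction principle (conditionally on the symmetrized variables) to replace the coefficients by Bernoulli signs. Throughout, the index $j$ and the outer $\max_j$ should be treated as a fixed "ambient" structure: the key point is that symmetrization and contraction can both be applied while the $\max_j$ sits on the outside, because both operations are justified by Jensen's inequality applied to the convex function $(\text{all coefficients}) \mapsto \max_j \|\sum_i (\text{coeff})_{ij} x_{ij}\|$.

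\begin{proof}
Let $(Z'_{ij})$ be an independent copy of $(Z_{ij})$, and let $(\e_{ij})$ be symmetric Bernoulli variables independent of everything. Since each $Z'_{ij}$ has mean zero, Jensen's inequality (conditioning on $(Z_{ij})$ and using convexity of the norm and of the maximum) gives
$$
\E \max_j \Big\| \sum_i Z_{ij} x_{ij} \Big\|
= \E \max_j \Big\| \sum_i \big( Z_{ij} - \E Z'_{ij} \big) x_{ij} \Big\|
\le \E \max_j \Big\| \sum_i \big( Z_{ij} - Z'_{ij} \big) x_{ij} \Big\|.
$$
The differences $Y_{ij} := Z_{ij} - Z'_{ij}$ are independent symmetric random variables with $|Y_{ij}| \le 2$, so $(Y_{ij})$ has the same distribution as $(\e_{ij} Y_{ij})$ for any independent choice of signs $\e_{ij}$. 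Hence the right-hand side equals $\E \max_j \| \sum_i \e_{ij} Y_{ij} x_{ij} \|$.

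Now condition on the $Y_{ij}$ and apply the contraction principle (\cite{LT}, Theorem~4.4) to the Bernoulli sum: since $|Y_{ij}/2| \le 1$, the map that scales each sign $\e_{ij}$ by the scalar $Y_{ij}/2$ is a contraction, and the contraction principle applies with the convex function on the sign vector being $(\e_{ij}) \mapsto \max_j \| \sum_i \e_{ij} (\cdot) \|$. This yields
$$
\E_{\e} \max_j \Big\| \sum_i \e_{ij} Y_{ij} x_{ij} \Big\|
= 2\, \E_{\e} \max_j \Big\| \sum_i \e_{ij} \tfrac{Y_{ij}}{2} x_{ij} \Big\|
\le 2\, \E_{\e} \max_j \Big\| \sum_i \e_{ij} x_{ij} \Big\|.
$$
Taking expectation over the $Y_{ij}$ as well, and noting the resulting bound no longer depends on the $Y_{ij}$, gives
$$
\E \max_j \Big\| \sum_i Z_{ij} x_{ij} \Big\|
\le 2\, \E \max_j \Big\| \sum_i \e_{ij} x_{ij} \Big\|,
$$
which is the claim.
\end{proof}

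The only point requiring a little care — and the place I would expect a referee to push — is the application of the contraction principle in the presence of the outer $\max_j$: the standard statement is usually phrased for a single Banach-space-valued sum, so one must either invoke the version for the supremum over a family of linear functionals, or simply observe that $(\e_{ij}) \mapsto \max_j \|\sum_i \e_{ij} v_{ij}\|$ is a convex function of the sign vector and apply the general form of the contraction principle (as in \cite{LT}, Theorem~4.4, which is stated for convex increasing functions of such sums). Since the $\max_j$ introduces no cross-terms between distinct $j$'s — the $i$-sum for each $j$ is handled independently once the signs are fixed — everything goes through coordinatewise. No other step is delicate; the two uses of Jensen's inequality are routine.
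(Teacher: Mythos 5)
Your proof is correct, and the overall strategy --- Jensen-symmetrize with an independent copy $Z'_{ij}$, then use the symmetry of $Y_{ij} = Z_{ij}-Z'_{ij}$ to insert Bernoulli signs, then strip off the bounded scalars $Y_{ij}/2$ --- matches the paper's. The only difference is in how the last step is justified. You invoke the contraction principle (\cite{LT}, Theorem~4.4) as a black box, extended to the $\max_j$ setting by viewing $\max_j \|\cdot\|$ as the $\ell^\infty$ norm on the product Banach space (so the double-indexed sum $\sum_{i,j}\e_{ij} a_{ij} v_{ij}$ is a single Rademacher sum to which the usual contraction principle applies). The paper explicitly notes, just before the lemma, that the statement ``can be deduced from standard symmetrization lemma (\cite{LT} Lemma~6.3) and the contraction principle (\cite{LT} Theorem~4.4),'' and then, for the reader's convenience, replaces the citation with a direct argument: the map $(a_{ij}) \mapsto \E \max_j \bigl\|\sum_i \e_{ij} a_{ij} x_{ij}\bigr\|$ is convex on $\R^{n^2}$, hence on the cube $[-1,1]^{n^2}$ it is maximized at an extreme point (all $a_{ij}=\pm 1$), and by the distributional symmetry of the $\e_{ij}$ every such extreme point gives the same value, namely $\E \max_j \bigl\|\sum_i \e_{ij} x_{ij}\bigr\|$. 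That elementary argument is exactly the instance of the contraction principle you need, so the two proofs are interchangeable; yours is a touch shorter at the cost of the external citation and the short justification (which you correctly flag) of why the contraction principle survives the outer $\max_j$.
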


\begin{proof}
To be specific, we can assume that both indices $i$ and $j$ range in the interval 
$\{1,\ldots,n\}$ for some integer $n$.
Let $(Z'_{ij})$ denote an independent copy of the sequence of random variables $(Z_{ij})$.
Then $Z_{ij} - Z'_{ij}$ are symmetric random variables. We have
\begin{align*}
\E \max_j \Big\| \sum_i Z_{ij} x_{ij} \Big\|
  &\le  \E \max_j \Big\| \sum_i (Z_{ij} - \E Z'_{ij}) x_{ij} \Big\|	\quad \text{(since $\E Z'_{ij} = 0$)} \\
  &\le  \E \max_j \Big\| \sum_i (Z_{ij} - Z'_{ij}) x_{ij} \Big\|	\quad \text{(by Jensen's inequality)} \\
  &=  \E \max_j \Big\| \sum_i \e_{ij} (Z_{ij} - Z'_{ij}) x_{ij} \Big\|	\quad \text{(by symmetry)} \\
  &\le 2 \max_{|a_{ij}| \le 1} \E \max_j \Big\| \sum_i \e_{ij} a_{ij} x_{ij} \Big\|
\end{align*}
where the last line follows because $|Z_{ij} - Z'_{ij}| \le |Z_{ij}| + |Z'_{ij}| \le 2$.
The function on $\R^{n^2}$
$$
(a_{ij})_{i,j=1}^n \mapsto \E \max_j \Big\| \sum_i \e_{ij} a_{ij} x_{ij} \Big\|
$$
is a convex function. Therefore, on the compact convex set $[-1,1]^{n^2}$ it attains its 
maximum on the extreme points, where all $a_{ij} = \pm 1$. By symmetry, the function 
takes the same value at each extreme point, which equals 
$$
\E \max_j \Big\| \sum_i \e_{ij} x_{ij} \Big\|.
$$
This completes the proof.
\end{proof}

\subsection{Truncation and conditioning}
%..............................

We will need some elementary observations related to truncation and conditioning of random variables.

\begin{lemma}[Truncation]               \label{truncation}
  Let $X$ be a non-negative random variable, and let $M > 0$, $p \ge 1$.
  Then
  $$
  \E X \one_{\{X \ge M\}} \le \frac{\E X^p}{M^{p-1}}.
  $$
\end{lemma}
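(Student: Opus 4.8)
The plan is to use a pointwise bound on the integrand and then integrate. The key observation is that on the event $\{X \ge M\}$ the ratio $X/M$ is at least $1$, so raising it to the nonnegative power $p-1$ only increases it.

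Concretely, I would first note that for every outcome in $\{X \ge M\}$ we have
$$
X = X \cdot 1 \le X \cdot \Big( \frac{X}{M} \Big)^{p-1} = \frac{X^p}{M^{p-1}},
$$
where the inequality uses $X/M \ge 1$ together with $p - 1 \ge 0$. Multiplying through by the indicator $\one_{\{X \ge M\}}$ gives the pointwise inequality
$$
X \one_{\{X \ge M\}} \le \frac{X^p}{M^{p-1}} \one_{\{X \ge M\}} \le \frac{X^p}{M^{p-1}},
$$
the last step because $X$ is non-negative, so $X^p \one_{\{X \ge M\}} \le X^p$. Taking expectations of both sides and using monotonicity of expectation yields $\E X \one_{\{X \ge M\}} \le \E X^p / M^{p-1}$, as claimed.

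There is essentially no obstacle here; the only thing to be mildly careful about is the degenerate case $p = 1$, where the bound reads $\E X \one_{\{X \ge M\}} \le \E X$, which is immediate and consistent with the convention $X^{0} = 1$ in the displayed manipulation. One should also implicitly assume $\E X^p < \infty$ for the statement to be non-vacuous, but the inequality holds in $[0,\infty]$ regardless.
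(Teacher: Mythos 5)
Your argument is correct and is essentially identical to the paper's proof: both bound $X \one_{\{X \ge M\}}$ pointwise by $X(X/M)^{p-1}\one_{\{X \ge M\}}$, drop the indicator, and take expectations. Nothing to add.
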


\begin{proof}
Indeed,
$$
\E X \one_{\{X \ge M\}}
\le \E X (X/M)^{p-1} \one_{\{X \ge M\}}
\le \E X^p / M^{p-1}.
$$
The Lemma is proved.
\end{proof}

\medskip

We will also need two elementary conditioning lemmas. In Section~\ref{s: concentration},
we will need to control the maximal magnitude of the entries $M_0 = \max_{ij} |a_{ij}|$ 
of the random matrix $A$. Conditioning on $M_0$ will unfortunately destroy the independence of the entries. 
So, we will instead condition on an event $\{ M_0 \le t \}$ for fixed $t$, which will clearly
preserve the independence. This conditional argument used in the proof of Corollary~\ref{almost square}
relies on the following two elementary lemmas.

\begin{lemma}					\label{exp reduces by conditioning}
  Let $X$ be a random variable and $K$ be a real number.
  Then 
  $$
  \E (X \,|\, X \le K) \le \E X.
  $$
\end{lemma}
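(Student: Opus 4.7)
The plan is to exploit the fact that the unconditional mean $\E X$ is a convex combination of the two conditional means $\E(X \mid X \le K)$ and $\E(X \mid X > K)$, weighted by $\P(X \le K)$ and $\P(X > K)$, and that these two conditional means are automatically separated by $K$. On $\{X \le K\}$ one has $X \le K$ almost surely, so $\E(X \mid X \le K) \le K$; analogously $\E(X \mid X > K) \ge K$. Hence $\E X$ lies between the two conditional means, and in particular dominates the smaller one, which is exactly the claim.

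To execute this, I would assume $\P(X \le K) > 0$ (otherwise the conditional expectation is undefined and the statement is vacuous), set $c := \E(X \mid X \le K)$, and record the inequality $c \le K$ noted above. Splitting the unconditional mean along the partition $\{X \le K\}, \{X > K\}$ gives
$$
\E X = \E(X \one_{\{X \le K\}}) + \E(X \one_{\{X > K\}}) = c\,\P(X \le K) + \E(X \one_{\{X > K\}}).
$$
Since $X > K \ge c$ on $\{X > K\}$, the second term is bounded below by $c\,\P(X > K)$. Adding yields $\E X \ge c\bigl(\P(X \le K) + \P(X > K)\bigr) = c = \E(X \mid X \le K)$.

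There is no substantial obstacle here; the lemma is really just the statement that truncating a random variable from above lowers its mean, and it falls out of the law of total expectation as soon as one observes that the two conditional means are separated by $K$. The only mild technicality is the degenerate case $\P(X \le K) = 0$, which is handled by the usual convention that the conclusion is vacuous.
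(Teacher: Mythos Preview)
Your proof is correct and follows essentially the same approach as the paper: both use the law of total expectation to express $\E X$ as a convex combination of $\E(X \mid X \le K)$ and $\E(X \mid X > K)$, observe that these are separated by $K$, and conclude. Your execution, bounding $\E(X\one_{\{X>K\}}) \ge c\,\P(X>K)$ directly, is a minor variant that gracefully sidesteps defining the second conditional mean when $\P(X>K)=0$, but the idea is identical.
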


\begin{proof}
By the law of total probability,
$$
\E X = \E (X \,|\, X \le K) \, \P(X \le K) +  \E (X \,|\, X > K) \, \P(X > K).
$$
Thus $\E X$ is a convex combination of the numbers 
$a = \E (X \,|\, X \le K)$ and $b = \E (X \,|\, X > K)$.
Since clearly $a \le K \le b$, we must have $a \le \E X \le b$.
\end{proof}

\begin{lemma}					\label{conditional exp}
  Let $X$, $Y$ be non-negative random variables. 
  Assume there exists $K, L > 0$ such that one has for every $t \ge 1$:
  \begin{equation}					\label{KL}
  \E (X^2 \,|\, Y \le t) \le K^2 t, \qquad
  \P (Y > L t) \le \frac{1}{t^2}.
  \end{equation}
  Then $\E X \le C K \sqrt{L}$.
\end{lemma}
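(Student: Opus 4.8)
The plan is to decompose the expectation $\E X$ according to a dyadic partition of the range of $Y$, and on each piece use Cauchy--Schwarz together with the two hypotheses in \eqref{KL}. First I would write $\E X = \E X \one_{\{Y \le L\}} + \sum_{k \ge 0} \E X \one_{\{L 2^k < Y \le L 2^{k+1}\}}$. For the first term, on the event $\{Y \le L\} \subseteq \{Y \le L\}$ we have (taking $t = L$, or more simply $t=1$ combined with scaling—here I would just apply the first inequality of \eqref{KL} with an appropriate $t \ge 1$, assuming $L \ge 1$ without loss of generality, or absorbing the $t=1$ case into the constant) the bound $\E (X^2 \mid Y \le L) \le K^2 L$, so by Cauchy--Schwarz $\E X \one_{\{Y \le L\}} \le \big(\E(X^2 \mid Y \le L)\big)^{1/2} \le K \sqrt{L}$.

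For the $k$-th dyadic term, I would use Cauchy--Schwarz in the form $\E X \one_{\{L 2^k < Y \le L 2^{k+1}\}} \le \big(\E X^2 \one_{\{Y \le L 2^{k+1}\}}\big)^{1/2} \big(\P(Y > L 2^k)\big)^{1/2}$. The first factor is at most $\big(\E(X^2 \mid Y \le L 2^{k+1})\big)^{1/2} \le (K^2 \cdot 2^{k+1})^{1/2} = K\sqrt{2}\,\sqrt{2^k}$ by the first inequality of \eqref{KL} with $t = 2^{k+1} \ge 1$; the second factor is at most $(2^{-2k})^{1/2} = 2^{-k}$ by the second inequality of \eqref{KL} with $t = 2^k \ge 1$. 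So the $k$-th term is bounded by $K\sqrt{2}\,\sqrt{L}\,\sqrt{2^k}\cdot 2^{-k} = K\sqrt{2L}\,2^{-k/2}$. Wait—I need the $\sqrt{L}$ to appear; it comes out because $\E(X^2 \mid Y \le L 2^{k+1}) \le K^2 (L 2^{k+1})$ if the hypothesis reads with $t$ in place of the full argument, but as stated \eqref{KL} gives $K^2 t$ with $t = 2^{k+1}$, so the $L$ is absent there and present only through the truncation level bookkeeping; in either reading the series $\sum_k 2^{-k/2}$ converges to an absolute constant, giving $\sum_{k\ge 0} \E X \one_{\{L 2^k < Y \le L 2^{k+1}\}} \le C K \sqrt{L}$. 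Summing the first term and the series yields $\E X \le C K \sqrt{L}$ as claimed.

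The only genuinely delicate point is the interface between the hypothesis, which is only assumed for $t \ge 1$, and the innermost piece $\{Y \le L\}$ (or the possibility $L < 1$): I would handle this by first reducing to $L \ge 1$ (replacing $L$ by $\max(L,1)$ only enlarges the right-hand side and the bound $\P(Y > Lt)\le t^{-2}$ still holds), and by noting that $\{Y \le L\} \subseteq \{Y \le L t\}$ for $t = 1$, which is exactly the boundary case permitted by \eqref{KL}. Everything else is a routine convergent geometric sum, so I do not anticipate a real obstacle beyond this bookkeeping.
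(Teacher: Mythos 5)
Your proof takes the same route as the paper's: the same dyadic decomposition of the range of $Y$, Cauchy--Schwarz on each shell, and a convergent geometric series, so in substance it is the paper's argument (the paper's shells $\{2^{k-1}L < Y \le 2^k L\}$ for $k\ge 1$ differ from yours only by an index shift). The one point worth straightening out is the ``Wait---I need the $\sqrt{L}$ to appear\dots'' passage: you substituted $t = 2^{k+1}$ into $\E(X^2 \mid Y\le t) \le K^2 t$, but $t$ there labels the conditioning threshold, so to bound $\E(X^2 \mid Y\le L2^{k+1})$ you should set $t := L2^{k+1}$, which is $\ge 1$ once you reduce to $L\ge 1$. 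That gives $\E(X^2\mid Y\le L2^{k+1}) \le K^2 L2^{k+1}$ directly, and the $k$-th shell is bounded by $(K^2 L 2^{k+1})^{1/2}\,(2^{-2k})^{1/2} = K\sqrt{2L}\,2^{-k/2}$ with no separate bookkeeping about where $L$ enters; with this substitution your computation coincides with the paper's and the conclusion $\E X \le CK\sqrt{L}$ follows.
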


\begin{proof}
Without loss of generality we can assume that $K = 1$ by rescaling $X$ to $X/K$.
Thus we have for every $t \ge 1$: 
\begin{equation}							\label{after scaling}
  \E X^2 \one_{\{ Y \le t \}} \le \E (X^2 \,|\, Y \le t) \le t.,
\end{equation}
We consider the decomposition
$$
\E X = \E X \one_{\{ Y \le L \}} 
  + \sum_{k=1}^\infty  \E X \one_{\{ 2^{k-1} L < Y \le 2^k L \}}.
$$
By \eqref{after scaling} and H\"older's inequality, the first term is bounded as
$$
\E X \one_{\{ Y \le L \}} 
\le \big( \E X^2 \one_{\{ Y \le L \}} \big)^{1/2}
\le \sqrt{L}.
$$
Further terms can be estimated by Cauchy-Schwarz inequality and using \eqref{after scaling}
and the second inequality in \eqref{KL}. Indeed, 
\begin{align*}
\E X \one_{\{ 2^{k-1} L < Y \le 2^k L \}}
  &= \E X \one_{\{ Y \le 2^k L \}} \one_{\{ Y > 2^{k-1} L \}} \\
  &\le \big( \E X^2 \one_{\{ Y \le 2^k L \}} \big)^{1/2}
    \big( \P \{ Y > 2^{k-1} L \} \big)^{1/2}\\
  &\le (2^k L)^{1/2} \cdot \frac{1}{2^{k-1}}
  = \sqrt{L} \, 2^{1-k/2}.
\end{align*}
Therefore 
$$
\E X \le \sqrt{L} + \sum_{k=1}^\infty \sqrt{L} \, 2^{1-k/2} \le C \sqrt{L}.
$$
This completes the proof.
\end{proof}

\subsection{On the deterministic matrix $B$ in Theorem~\ref{main}.} \label{s: reductions}
%..............................

We start with two initial observations that will make our proof of Theorem~\ref{main}
more transparent.
By adding an appropriate number of zero rows to $B$ or zero columns to $A$ we can 
assume without loss of generality that $n=m$, thus $B$ is an $n \times N$ matrix.

Throughout the proof of Theorem~\ref{main}, we shall denote the columns of 
such a matrix $B$ by $B_1,\ldots,B_N$. They are non-random vectors in $\R^n$, which satisfy
\begin{equation}                        \label{B HS}
\max_i \|B_i\|_2 \le \|B\| \le 1; \qquad
\sum_{i=1}^N \|B_i\|_2^2
= \|B\|_{\HS}^2
\le n \|B\|
\le n
\end{equation}
where $\|\cdot\|_\HS$ denotes the Hilbert-Schmidt norm.
Throughout the argument, we will only have access to the matrix $B$ through 
inequalities \eqref{B HS}. This explains Remark~2 following Theorem~\ref{main}, 
which states that the range space of $B$ is irrelevant as long as we control 
the spectral and Hilbert-Schmidt norms of $B$.

\section{Approach via M.~Rudelson's theorem}                \label{s: rudelson}
%----------------------------------------------

\subsection{M.~Rudelson's theorem}
%..............................

Our first approach, which will yield Theorem~\ref{main} up to a logarithmic factor, 
rests on the following result.
Here and thereafter, by $\e_1, \e_2, \ldots$ we denote independent symmetric 
Bernoulli random variables, i.e. independent random variables such that 
$\P (\e_i = \pm 1) = 1/2$.

\begin{theorem}[M.~Rudelson \cite{R 99}]                        \label{rudelson}
  Let $u_1,\ldots,u_M$ be vectors in $\R^m$.
  Then, for every $p \ge 1$, one has
  $$
  \Big( \E \Big\| \sum_{i=1}^M \e_i u_i \otimes u_i \Big\|^p \Big)^{1/p}
  \le C (\sqrt{p} + \sqrt{\log m})
  \cdot \max_i \|u_i\|_2
  \cdot \Big\| \sum_{i=1}^M u_i \otimes u_i \Big\|^{1/2}.
  $$
  In particular, for every $t > 0$, with probability at least $1 - 2 m e^{-ct^2}$ one has
  $$
  \Big\| \sum_{i=1}^M \e_i u_i \otimes u_i \Big\|
  \le t
  \cdot \max_i \|u_i\|_2
  \cdot \Big\| \sum_{i=1}^M u_i \otimes u_i \Big\|^{1/2}.
  $$
\end{theorem}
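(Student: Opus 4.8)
The plan is to replace the operator norm by a Schatten $S_q$-norm, where the non-commutative Khintchine inequality of Lust-Piquard and Pisier is available, and then to pass back to the operator norm at the negligible cost of a factor $m^{1/q}$, which is a constant once $q \gtrsim \log m$. Throughout write $S = \sum_{i=1}^M \e_i\, u_i \otimes u_i$ and $Q = \sum_{i=1}^M u_i \otimes u_i$, so that $Q$ is positive semidefinite of rank at most $m$ and the quantity on the right of the theorem is $R := \max_i \|u_i\|_2 \cdot \|Q\|^{1/2}$.

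For the moment inequality, fix the exponent $q = p + \lceil \log m \rceil$, so $q \ge p$ and $m^{1/q} \le e$. Since $\|S\| \le \|S\|_{S_q}$ pointwise and $L_p$-norms increase with the exponent, $\big( \E \|S\|^p \big)^{1/p} \le \big( \E \|S\|_{S_q}^q \big)^{1/q}$. The operators $A_i := u_i \otimes u_i$ are self-adjoint, so the Hermitian case of the non-commutative Khintchine inequality gives
$$
\big( \E \|S\|_{S_q}^q \big)^{1/q} \le C \sqrt{q}\, \Big\| \Big( \sum_{i=1}^M A_i^2 \Big)^{1/2} \Big\|_{S_q}.
$$
Now $A_i^2 = \|u_i\|_2^2\, u_i \otimes u_i$, hence $\sum_i A_i^2 = \sum_i \|u_i\|_2^2\, u_i \otimes u_i \preceq (\max_i \|u_i\|_2^2)\, Q$; operator monotonicity of the square root together with monotonicity of Schatten norms on positive operators then yields $\big\|(\sum_i A_i^2)^{1/2}\big\|_{S_q} \le \max_i \|u_i\|_2 \cdot \|Q^{1/2}\|_{S_q}$. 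Finally, since $Q$ has rank at most $m$,
$$
\|Q^{1/2}\|_{S_q} = \|Q\|_{S_{q/2}}^{1/2} \le m^{1/q}\, \|Q\|^{1/2} \le e\, \|Q\|^{1/2}.
$$
Combining these estimates and using $\sqrt{q} \le 2(\sqrt{p} + \sqrt{\log m})$, valid since $p \ge 1$, proves the claimed bound after absorbing absolute constants into $C$.

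For the ``in particular'' statement I would run the same chain with the exponent $p$ itself (not enlarged), which yields $\E \|S\|_{S_p}^p \le m\, (C\sqrt{p}\, R)^p$, the factor $m$ coming from the crude bound $\|Q^{1/2}\|_{S_p}^p \le m\, \|Q\|^{p/2}$. Markov's inequality then gives $\P(\|S\| > tR) \le \P(\|S\|_{S_p} > tR) \le m\,(C\sqrt{p}/t)^p$, and choosing $p$ of order $t^2$ makes $C\sqrt{p}/t \le 1/e$, so $\P(\|S\| > tR) \le m\, e^{-ct^2}$; this is the asserted conclusion, with the factor $2$ to spare. When $t$ is so small that the required $p \ge 1$ fails, the asserted probability bound already exceeds $1$ and there is nothing to prove.

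The one genuinely nontrivial ingredient is the non-commutative Khintchine inequality, with its sharp $\sqrt{q}$ dependence on the exponent; everything else — the comparison $\|\cdot\| \le \|\cdot\|_{S_q} \le m^{1/q}\|\cdot\|$, the identity $A_i^2 = \|u_i\|_2^2\, A_i$, operator monotonicity, and the rank bound — is routine. Thus the main obstacle is not inside the present argument but in having that inequality at one's disposal: it is precisely its $\sqrt{q}$ constant that produces the optimal $\sqrt{p} + \sqrt{\log m}$ factor, and no weaker form of Khintchine would give this dependence.
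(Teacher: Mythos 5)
Your argument is correct, and it is in essence the argument of \cite{R 99} itself: the moment bound is the non-commutative Khintchine inequality applied to the rank-one self-adjoint operators $u_i \otimes u_i$, with the Schatten exponent shifted by $\lceil\log m\rceil$ so that the rank-$m$ loss $m^{1/q}$ becomes $O(1)$. Be aware, though, that the paper does not reprove this part at all: it cites \cite{R 99} for the first display, and its only content in the proof of Theorem~\ref{rudelson} is the abstract moments-to-tails reduction, Lemma~\ref{int-tail}, used to produce the $2me^{-ct^2}$ tail. Your derivation of the tail --- running the same Khintchine chain at exponent $p$ with the crude $\|Q^{1/2}\|_{S_p}^p \le m\|Q\|^{p/2}$ bound, applying Markov, and optimizing $p \sim t^2$ --- is precisely what Lemma~\ref{int-tail} packages abstractly, so the two routes are equivalent in content; what your version buys is a self-contained treatment of the half the paper outsources. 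Two small technicalities worth tidying: the $\sqrt{q}$ constant in the non-commutative Khintchine inequality is usually stated for $q \ge 2$, so set $q = \max(2,\, p + \lceil\log m\rceil)$ rather than $q = p + \lceil\log m\rceil$ (which can equal $1$ when $m=1$), and likewise force the optimizing exponent in the tail argument to be at least $2$ rather than at least $1$; in both degenerate regimes the claimed bound is trivially true or vacuous, exactly as you already observe for small $t$. These are one-line patches, not gaps.
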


The first estimate is taken from \cite[inequality (3.4)]{R 99}. The second estimate can be easily derived from it
using the following elementary lemma:

\begin{lemma}[Moments and tails]                   \label{int-tail}
  Suppose a non-negative random variable $X$ satisfies for some $m \ge 1$ that
  $$
  (\E X^p)^{1/p} \le \sqrt{p} + \sqrt{\log m} \quad \text{for every } p \ge 1.
  $$
  Then
  $$
  \P (X \ge t) \le 2m e^{-ct^2} \quad \text{for every } t > 0.
  $$
\end{lemma}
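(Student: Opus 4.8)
The plan is to prove Lemma~\ref{int-tail} by the standard device of converting a uniform moment bound into a tail bound via Markov's inequality, optimizing the choice of the exponent $p$. Write $a := \sqrt{\log m}$, so that the hypothesis reads $(\E X^p)^{1/p} \le \sqrt{p} + a$ for all $p \ge 1$, and hence $\E X^p \le (\sqrt{p}+a)^p$. By Markov's inequality, for any $t > 0$ and any $p \ge 1$,
\[
\P(X \ge t) \le \frac{\E X^p}{t^p} \le \left( \frac{\sqrt{p} + a}{t} \right)^p.
\]
The goal is to choose $p$ as a function of $t$ so that the right-hand side is at most $2m e^{-ct^2}$.

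First I would dispose of the small-$t$ range: if $t \le 2a = 2\sqrt{\log m}$, then $e^{-ct^2} \ge e^{-4c\log m} = m^{-4c}$, so choosing $c$ small enough (say $c \le 1/4$) makes $2m e^{-ct^2} \ge 2m^{1-4c} \ge 2 \ge 1 \ge \P(X \ge t)$, and the bound is trivial. So assume $t > 2\sqrt{\log m} \ge 2a$; in particular $t \ge a$ and also $t > 0$. Now I would split further according to whether $t$ is large compared to $1$. The natural choice is $p = (t/2)^2 = t^2/4$, which is admissible ($p \ge 1$) once $t \ge 2$; handle $2\sqrt{\log m} < t < 2$ (which forces $\log m < 1$, i.e. $m$ bounded) as another easy regime where one can just take $p=1$ and absorb constants. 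With $p = t^2/4$ and using $a < t/2$, we get $\sqrt{p} + a = t/2 + a < t$, so actually I want a little room: instead take $p = t^2/c'$ for a suitable constant $c' $ so that $\sqrt p + a \le t/e$, say. Concretely, since $a \le t/2$, pick $p$ with $\sqrt p \le t/2 - $ (something), e.g. $p = t^2/16$ gives $\sqrt p + a \le t/4 + t/2 = 3t/4 < t$, and then $\left(\frac{\sqrt p + a}{t}\right)^p \le (3/4)^{t^2/16} = e^{-c t^2}$ with $c = \frac{1}{16}\log(4/3) > 0$. Combining the regimes and adjusting the absolute constant $c$ downward to cover all cases yields $\P(X \ge t) \le 2m e^{-ct^2}$ for all $t > 0$, with the extra factor $2m$ only needed to make the statement hold vacuously in the small-$t$ range.

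The only mildly delicate point — and the place I would be most careful — is the bookkeeping of the constant $c$ so that a \emph{single} absolute $c$ works uniformly in $t$ and $m$: the large-$t$ regime forces $c \le \frac{1}{16}\log(4/3)$ (or whatever the optimized split gives), while the small-$t$ regime forces $c \le 1/4$ so that $2m e^{-ct^2} \ge 1$ there; taking $c$ to be the minimum of these is harmless. There is no real obstacle here — the lemma is a routine Markov-plus-optimization argument — but one should state explicitly that $p$ is chosen as $\lceil \cdot \rceil$ or that one uses $\E X^p \le (\sqrt p + a)^p$ for the real number $p$ directly (the hypothesis is assumed for all real $p \ge 1$, so this is fine). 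I would also remark that the same computation shows more precisely $\P(X \ge e(\sqrt p + a)) \le e^{-p}$ for every $p \ge 1$, which is the cleaner statement and from which the displayed form follows by the change of variables $t \sim \sqrt p$.
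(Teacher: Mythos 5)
Your proof is correct and follows essentially the same route as the paper: apply Markov's inequality with $p$ on the order of $t^2$ for large $t$, and absorb the small-$t$ range into the prefactor $2m$ by choosing the absolute constant $c$ small enough. The only cosmetic difference is that the paper takes $p=t^2$ exactly and proves $\P(X\ge 2et)\le e^{-t^2}$, pushing the slack into the argument $2et$, whereas you shrink $p$ to $t^2/16$ so as to bound $\P(X\ge t)$ directly — both are the same Markov-plus-optimization computation.
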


\begin{proof}
Suppose first that $t \ge \max(1, \sqrt{\log m})$.
Let $p := t^2$. Then $\sqrt{p} \ge \sqrt{\log m}$, so the hypothesis gives
$(\E X^p)^{1/p} \le 2 \sqrt{p}$.
By Markov's inequality,
$$
\P (X \ge 2e t) = \P (X^p \ge (2e t)^p)
\le \frac{(2 \sqrt{p})^p}{(2et)^p}
= e^{-t^2}.
$$
Next, if $t < \max(1, \sqrt{\log m})$ then by choosing the absolute constant $c>0$ 
sufficiently small  right hand side of \eqref{X ge t} is larger than $1$
for a sufficiently small absolute constant $c$ .
Therefore, for every $t > 0$ one has
\begin{equation}                        \label{X ge t}
  \P (X \ge 2et) \le 2m e^{-t^2/2}
\end{equation}
because if $t < \max(1, \sqrt{\log m})$ then the right hand side of \eqref{X ge t} is larger than one, 
which makes the inequality trivial.
This completes the proof.
\end{proof}

\medskip

The next lemma is a consequence of M.~Rudelson's Theorem~\ref{rudelson}
and a standard symmetrization argument.

\begin{lemma}                               \label{rudelson arbitrary}
  Let $X_1,\ldots,X_n$ be independent random vectors in $\R^m$ such that
  \begin{equation}                          \label{Xj tensor Xj}
    \| \E X_j \otimes X_j \| \le 1 \qquad \text{for every } j.
  \end{equation}
  Then
  $$
  \E \Big\| \sum_{j=1}^n X_j \otimes X_j \Big\|
  \le C n + C \log(2m) \, \E \max_j \|X_j\|_2^2.
  $$
\end{lemma}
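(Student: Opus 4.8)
The plan is to combine the standard symmetrization trick with M.~Rudelson's Theorem~\ref{rudelson}, applied conditionally on the random vectors $X_j$. First I would symmetrize: replacing $X_j \otimes X_j$ by $X_j \otimes X_j - \E X_j \otimes X_j$ costs at most $n \max_j \|\E X_j \otimes X_j\| \le n$ by \eqref{Xj tensor Xj} and the triangle inequality, so it suffices to bound $\E \|\sum_j (X_j \otimes X_j - \E X_j \otimes X_j)\|$. Introducing an independent copy $X_j'$ and symmetric Bernoulli signs $\e_j$, Jensen's inequality gives
$$
\E \Big\| \sum_j (X_j \otimes X_j - \E X_j' \otimes X_j') \Big\|
\le \E \Big\| \sum_j \e_j (X_j \otimes X_j - X_j' \otimes X_j') \Big\|
\le 2 \E \Big\| \sum_j \e_j X_j \otimes X_j \Big\|.
$$

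Next I would condition on the realization of $X_1,\ldots,X_n$ and apply Theorem~\ref{rudelson} with $p=1$, $M=n$, and $u_j = X_j$. This yields
$$
\E_\e \Big\| \sum_j \e_j X_j \otimes X_j \Big\|
\le C \sqrt{\log(2m)} \cdot \max_j \|X_j\|_2 \cdot \Big\| \sum_j X_j \otimes X_j \Big\|^{1/2}.
$$
Taking expectation over the $X_j$ and applying the Cauchy--Schwarz inequality to split the last two factors gives
$$
\E \Big\| \sum_j \e_j X_j \otimes X_j \Big\|
\le C \sqrt{\log(2m)} \Big( \E \max_j \|X_j\|_2^2 \Big)^{1/2} \Big( \E \Big\| \sum_j X_j \otimes X_j \Big\| \Big)^{1/2}.
$$

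Finally, chaining the two reductions, and writing $E := \E \|\sum_j X_j \otimes X_j\|$ and $L := \E \max_j \|X_j\|_2^2$, I obtain an inequality of the form $E \le n + C\sqrt{L \log(2m)}\sqrt{E}$. This is a quadratic inequality in $\sqrt{E}$, and solving it (using $\sqrt{ab}\le \tfrac12(a+b)$ to absorb the $\sqrt{E}$ term, i.e. $C\sqrt{L\log(2m)}\sqrt{E} \le \tfrac12 E + \tfrac12 C^2 L \log(2m)$) yields $E \le 2n + C^2 L\log(2m)$, which is the claimed bound after adjusting constants. The only mildly delicate point is the bookkeeping in this last self-improving step; everything else is a routine combination of symmetrization, Rudelson's inequality, and Cauchy--Schwarz. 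I do not anticipate a genuine obstacle here — the lemma is essentially a packaging of Theorem~\ref{rudelson} into a form convenient for later use.
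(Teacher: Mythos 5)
Your proof is correct and follows essentially the same route as the paper: symmetrize at a cost of $n$ (via the triangle inequality and \eqref{Xj tensor Xj}), condition on the $X_j$ and invoke Theorem~\ref{rudelson} with $p=1$, take expectations with Cauchy--Schwarz, and then solve the resulting quadratic inequality in $\sqrt{E}$. The only thing you spell out that the paper leaves implicit (``the conclusion of the lemma follows'') is the final absorption step $C\sqrt{L\log(2m)}\sqrt{E} \le \tfrac12 E + \tfrac12 C^2 L\log(2m)$, which is indeed all that is needed.
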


\begin{proof}
Let $\e_1,\ldots,\e_n$ be independent symmetric Bernoulli random variables.
By the triangle inequality, the standard symmetrization argument (see e.g. \cite[Lemma~6.3]{LT}),
and the assumption, we have
\begin{align*}
  E
  &:= \E \Big\| \sum_{j=1}^n X_j \otimes X_j \Big\|
  \le \E \Big\| \sum_{j=1}^n (X_j \otimes X_j - \E X_j \otimes X_j) \Big\|
    + \Big\| \sum_{j=1}^n \E X_j \otimes X_j \Big\| \\
  &\le 2 \E \Big\| \sum_{j=1}^n \e_j X_j \otimes X_j \Big\| + n.
\end{align*}
Condition on the random variables $X_1,\ldots,X_n$, and apply Theorem~\ref{rudelson}.
Writing $\E_\e$ to denote the conditional expectation (i.e. the expectation
with respect to the random variables $\e_1,\ldots,\e_n$), we have
$$
\E_\e \Big\| \sum_{j=1}^n \e_j X_j \otimes X_j \Big\|
\le C \sqrt{\log(2m)} \cdot \max_j \|X_j\|_2 \cdot \Big\| \sum_{j=1}^n X_j \otimes X_j \Big\|^{1/2}.
$$
Now we take expectation with respect to $X_1,\ldots,X_n$ and use Cauchy-Schwarz inequality to get
$$
E \le C \sqrt{\log(2m)} \cdot \big( \E \max_j \|X_j\|_2^2 \big)^{1/2} \cdot E^{1/2} + n.
$$
The conclusion of the lemma follows.
\end{proof}

\subsection{Theorem~\ref{main} up to a logarithmic term}
%..............................

We now state a version of Theorem~\ref{main} with a logarithmic factor.

\begin{proposition}                         \label{up to log}
  Let $N,n$ be positive integers. 
  Consider an $N \times n$ random matrix $A$ whose entries 
  are independent random variables
  with mean zero and $4$-th moment bounded by $1$.
  Let $B$ be an $n \times N$ matrix such that $\|B\| \le 1$.
  Then
  $$
  \E \|BA\| \le C \sqrt{n \log (2n)}.
  $$
\end{proposition}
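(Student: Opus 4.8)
The plan is to combine Lemma~\ref{rudelson arbitrary} with a direct second-moment estimate for the Euclidean norms of the columns of $W=BA$. Since the statement already puts us in the square case, $W$ is an $n\times n$ matrix; I would set $M:=B^*B$, a positive semidefinite $N\times N$ matrix, and record that by \eqref{B HS} one has $\|M\|=\|B\|^2\le 1$ and $\operatorname{tr}(M)=\sum_i M_{ii}=\sum_i\|B_i\|_2^2=\|B\|_\HS^2\le n$. Denote the columns of $A$ by $A^{(1)},\dots,A^{(n)}$; they are independent, hence so are the columns $X_j:=BA^{(j)}$ of $W$, and since $WW^*=\sum_{j=1}^n X_j\otimes X_j$ we have $\E\|W\|^2=\E\big\|\sum_{j=1}^n X_j\otimes X_j\big\|$. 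I would apply Lemma~\ref{rudelson arbitrary} (with $m=n$) to this sum, which requires two inputs: the normalization $\|\E X_j\otimes X_j\|\le 1$ for each $j$, and a bound of the form $\E\max_j\|X_j\|_2^2\le Cn$.

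The normalization is easy: $X_j\otimes X_j=B\,(A^{(j)}\otimes A^{(j)})\,B^*$, and $\E\,A^{(j)}\otimes A^{(j)}$ is the diagonal matrix with entries $\E a_{ij}^2\le(\E a_{ij}^4)^{1/2}\le 1$, so $\|\E X_j\otimes X_j\|\le\|B\|^2\le 1$.

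The substance of the argument is the bound on $\E\max_j\|X_j\|_2^2$. Here I would write $\|X_j\|_2^2=\langle A^{(j)},M A^{(j)}\rangle=\sum_{i,i'}M_{ii'}a_{ij}a_{i'j}$ and treat it as a quadratic form in the independent, mean-zero entries $a_{ij}$ with fourth moments at most $1$. Taking expectation gives $\E\|X_j\|_2^2=\sum_i M_{ii}\E a_{ij}^2\le\operatorname{tr}(M)\le n$. For the variance I would split the form into its diagonal part $\sum_i M_{ii}a_{ij}^2$ and off-diagonal part $\sum_{i\ne i'}M_{ii'}a_{ij}a_{i'j}$, which are uncorrelated since every cross term $\E[a_{ij}^2 a_{kj}a_{lj}]$ with $k\ne l$ vanishes. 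The diagonal part has variance $\sum_i M_{ii}^2\Var(a_{ij}^2)\le\sum_i M_{ii}^2\le\|M\|\operatorname{tr}(M)\le n$ (using $M_{ii}\le\|M\|$ and $\E a_{ij}^4\le 1$), and the off-diagonal part has second moment $2\sum_{i\ne i'}M_{ii'}^2\,\E a_{ij}^2\,\E a_{i'j}^2\le 2\operatorname{tr}(M^2)\le 2\|M\|\operatorname{tr}(M)\le 2n$; hence $\Var(\|X_j\|_2^2)\le 3n$. Chebyshev's inequality then gives $\P(\|X_j\|_2^2>s)\le 3n/(s-n)^2\le 12n/s^2$ for $s\ge 2n$, and a union bound over the $n$ columns yields $\P(\max_j\|X_j\|_2^2>s)\le 12n^2/s^2$ for $s\ge 2n$; integrating, $\E\max_j\|X_j\|_2^2\le 2n+\int_{2n}^\infty 12n^2 s^{-2}\,ds\le Cn$.

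With both inputs in hand, Lemma~\ref{rudelson arbitrary} gives $\E\|W\|^2\le Cn+C\log(2n)\cdot\E\max_j\|X_j\|_2^2\le Cn\log(2n)$, and Jensen's inequality finishes: $\E\|W\|\le(\E\|W\|^2)^{1/2}\le C\sqrt{n\log(2n)}$. I expect the delicate step to be exactly the bound $\E\max_j\|X_j\|_2^2=O(n)$: the fourth-moment hypothesis on the entries of $A$ is used precisely to make $\Var(\|X_j\|_2^2)$ finite (indeed $O(n)$), and this bare second-moment information on $\|X_j\|_2^2$ is only just enough to survive the union bound over the $n$ columns. The extra $\sqrt{\log n}$ factor, by contrast, is inherited from M.~Rudelson's theorem through Lemma~\ref{rudelson arbitrary} and cannot be removed by this method.
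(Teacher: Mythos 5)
Your proposal is correct and follows essentially the same route as the paper: the paper also applies Lemma~\ref{rudelson arbitrary} to the columns $X_j$ of $BA$, verifies $\|\E X_j\otimes X_j\|\le 1$ from $\|B\|\le 1$, and (in Lemmas~\ref{exp var} and \ref{columns BA}) establishes exactly the bounds $\E\|X_j\|_2^2\le n$ and $\Var(\|X_j\|_2^2)\le 3n$ before finishing with Chebyshev, a union bound over the $n$ columns, and integration. Your variance computation via the Gram matrix $M=B^*B$ split into diagonal and off-diagonal parts is just a repackaging of the paper's computation of $\E\|X_j\|_2^4$ through the columns $B_i$, and arrives at the identical constant.
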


The proof will need two auxiliary lemmas. Recall that $B_1,\ldots,B_N$ denote the columns of the matrix $B$.

\begin{lemma}                               \label{exp var}
  Let $a_1,\ldots,a_N$ be independent random variables 
  with mean zero and $4$-th moment bounded by $1$.
  Consider the random vector $X$ in $\R^n$ defined as
  $$
  X = \sum_{i=1}^N a_i B_i.
  $$
  Then
  $$
  \E \|X\|_2^2 \le n, \qquad
  \Var(\|X\|_2^2) \le 3 n.
  $$
\end{lemma}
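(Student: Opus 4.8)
The plan is to compute the first two moments of $\|X\|_2^2$ directly, by expanding it as a quadratic form in $a_1,\ldots,a_N$ and using only the independence and mean-zero/fourth-moment hypotheses on the $a_i$ together with the inequalities \eqref{B HS} for $B$. Write $\|X\|_2^2=\langle X,X\rangle=\sum_{i,j=1}^N a_i a_j\,\langle B_i,B_j\rangle$. Since the $a_i$ are independent with mean zero, $\E(a_ia_j)=\delta_{ij}\E(a_i^2)$, and $\E(a_i^2)\le(\E a_i^4)^{1/2}\le1$. Hence $\E\|X\|_2^2=\sum_i\E(a_i^2)\|B_i\|_2^2\le\sum_i\|B_i\|_2^2=\|B\|_\HS^2\le n$, which is the first claim.

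For the variance I would expand
\[
\|X\|_2^4=\sum_{i,j,k,l}a_ia_ja_ka_l\,\langle B_i,B_j\rangle\langle B_k,B_l\rangle .
\]
Because the $a_i$ are independent and mean zero, a term survives in expectation only when each index value occurs an even number of times, i.e. when $i=j=k=l$ or when the four positions split into two equal pairs. This gives the exact identity $\E\|X\|_2^4=T_0+T_1+T_2+T_3$, where $T_0$ is the ``all equal'' sum, $T_1$ the sum over $\{i=j,\ k=l,\ i\ne k\}$, and $T_2,T_3$ the two ``cross'' sums over $\{i=k,\ j=l,\ i\ne j\}$ and $\{i=l,\ j=k,\ i\ne j\}$. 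I would bound $T_0=\sum_i\E(a_i^4)\|B_i\|_2^4\le\sum_i\|B_i\|_2^4\le(\max_i\|B_i\|_2^2)\sum_i\|B_i\|_2^2\le n$ using \eqref{B HS}; and each cross sum $T_2,T_3$ equals $\sum_{i\ne j}\E(a_i^2)\E(a_j^2)\langle B_i,B_j\rangle^2\le\sum_{i,j}\langle B_i,B_j\rangle^2=\|B^*B\|_\HS^2=\sum_k s_k(B)^4\le\|B\|^2\|B\|_\HS^2\le n$, using that the eigenvalues of the Gram matrix $B^*B$ are the squared singular values $s_k(B)^2$, which sum to $\|B\|_\HS^2\le n$ and are each at most $\|B\|^2\le1$.

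The point requiring care is $T_1$: estimating it by $n^2$ would destroy the bound. Instead, $T_1=\sum_{i\ne k}\E(a_i^2)\E(a_k^2)\|B_i\|_2^2\|B_k\|_2^2$ differs from $\big(\sum_i\E(a_i^2)\|B_i\|_2^2\big)^2=(\E\|X\|_2^2)^2$ only by the nonnegative diagonal sum $\sum_i\E(a_i^2)^2\|B_i\|_2^4$, so $T_1\le(\E\|X\|_2^2)^2$. Therefore $\Var(\|X\|_2^2)=\E\|X\|_2^4-(\E\|X\|_2^2)^2\le T_0+T_2+T_3\le 3n$. The only genuine work is the combinatorial bookkeeping of which index patterns contribute in expectation and the observation that $T_1$ is absorbed by the subtracted term $(\E\|X\|_2^2)^2$; the three remaining estimates are each a one-line application of \eqref{B HS}.
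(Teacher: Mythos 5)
Your proof is correct and follows essentially the same route as the paper: expand $\|X\|_2^2$ and $\|X\|_2^4$ as forms in the $a_i$, use independence and the mean-zero hypothesis to reduce to paired-index patterns, absorb the $i=j,\ k=l$ contribution into the subtracted $(\E\|X\|_2^2)^2$, and bound the remaining three pieces by $n$ each via \eqref{B HS} and the Gram-matrix estimate $\|B^*B\|_\HS^2\le\|B\|^2\|B\|_\HS^2$. If anything, your combinatorial bookkeeping is slightly cleaner than the paper's, whose first displayed identity for $\E\|X\|_2^4$ triple-counts the $i=j=k=l$ terms (harmlessly, since only an upper bound is needed).
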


\begin{proof}
The estimate on the expectation follows easily from \eqref{B HS}:
\begin{equation}                                \label{E X square}
  \E \|X\|_2^2 = \sum_{i=1}^N \E(a_i^2) \|B_i\|_2^2
  \le \sum_{i=1}^N \|B_i\|_2^2
  \le n.
\end{equation}
To estimate the variance, we need to compute
$$
\E \|X\|_2^4
= \E \< X, X\> ^2
= \sum_{i,j,k,l=1}^N \E (a_i a_j a_k a_l) \< B_i, B_j \> \< B_k, B_l \> .
$$
By independence and the mean zero assumption, 
the only nonzero terms in this sum are those for which $i=j; k=l$ or
$i=k; j=l$ or $i=l; j=k$.
Therefore
\begin{align*}
  \E \|X\|_2^4
  &= \sum_{i,j=1}^N \E(a_i^2 a_j^2) \|B_i\|_2^2 \|B_j\|_2^2
    + 2 \sum_{i,j=1}^N \E(a_i^2 a_j^2) \< B_i, B_j\> ^2 \\
  &=\sum_{i=1}^N \E(a_i^4) \|B_i\|_2^4
  +\sum_{\substack{i,j=1 \\ i \ne j}}^N \E(a_i^2) \E(a_j^2) \|B_i\|_2^2 \|B_j\|_2^2
  + 2 \sum_{i,j=1}^N \E(a_i^2 a_j^2) \< B_i, B_j\> ^2 \\
  &=: I_1 + I_2 + I_3.
\end{align*}
By the fourth moment assumption and using \eqref{B HS} we have
$$
I_1 \le \sum_{i=1}^N \|B_i\|_2^4
\le \max_{i} (\|B_i\|_2^2) \sum_{i=1}^N \|B_i\|_2^2
\le n
$$
Squaring the sum in \eqref{E X square}, we see that
$$
I_2 \le (\E \|X\|_2^2)^2.
$$
Finally, since by Cauchy-Schwarz inequality
$\E(a_i^2 a_j^2) \le \sqrt{\E(a_i^4) \E(a_j^4)} \le 1$, and
using \eqref{B HS} again, we obtain
$$
I_3
\le 2 \sum_{i,j=1}^N \< B_i, B_j\> ^2
= 2\|B^* B\|_\HS^2
\le 2\|B^*\|^2 \|B\|_\HS^2
= 2\|B\|^2 \|B\|_\HS^2
\le 2 n.
$$
Putting all this together, we obtain
$$
\Var(\|X\|_2^2)
= \E \|X\|_2^4 - (\E \|X\|_2^2)^2
\le I_1 + I_3 \le 3 n.
$$
This completes the proof.
\end{proof}

\begin{lemma}                   \label{columns BA}
  Let $A$ and $B$ be matrices as in Proposition~\ref{up to log}.
  Let $X_1,\ldots,X_n \in \R^n$ denote the columns of the matrix $BA$.
  Then
  $$
  \E \max_{j=1,\ldots,n} \|X_j\|_2^2 \le C n.
  $$
\end{lemma}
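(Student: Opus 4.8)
The plan is to combine the mean and variance bounds from Lemma~\ref{exp var} with a union bound over the $n$ columns. First I would observe that each column of $BA$ has the form treated in Lemma~\ref{exp var}: writing $X_j = \sum_{i=1}^N a_{ij} B_i$, the coefficients $a_{1j},\ldots,a_{Nj}$ are independent, mean zero, with fourth moment bounded by $1$, so Lemma~\ref{exp var} applies to each $j$ and gives
\[
\E \|X_j\|_2^2 \le n, \qquad \Var(\|X_j\|_2^2) \le 3n \qquad (j=1,\ldots,n).
\]

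Next I would pass from moments to a tail bound. Chebyshev's inequality applied to the random variable $\|X_j\|_2^2$ yields, for every $t>0$,
\[
\P\big(\|X_j\|_2^2 > n + t\big) \le \P\big(|\|X_j\|_2^2 - \E\|X_j\|_2^2| > t\big) \le \frac{3n}{t^2},
\]
and a union bound over $j=1,\ldots,n$ gives $\P\big(\max_j \|X_j\|_2^2 > n+t\big) \le 3n^2/t^2$. Finally I would integrate this tail: splitting $\E \max_j \|X_j\|_2^2 = \int_0^\infty \P(\max_j \|X_j\|_2^2 > s)\,ds$ at $s = 2n$, using the trivial bound $\P \le 1$ below $2n$ and the bound above for $s = n+t$ with $t > n$, one gets $\E \max_j \|X_j\|_2^2 \le 2n + \int_n^\infty 3n^2 t^{-2}\,dt = 5n$, which is the claim with $C=5$.

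I do not expect any serious obstacle here; the only point to watch is that Chebyshev is just barely enough. Because $\Var(\|X_j\|_2^2)$ is only of order $n$, the union bound over $n$ columns produces a tail of order $n^2/t^2$, which first becomes informative at scale $t \sim n$ — exactly the scale we can afford. Had the variance been of larger order one would need higher moments of $\|X_j\|_2^2$ or a concentration inequality (e.g. Corollary~\ref{exponential deviation} after the Gaussian symmetrization), but in the present range of parameters this refinement is unnecessary.
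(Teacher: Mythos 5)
Your proof is correct and follows essentially the same route as the paper: invoke Lemma~\ref{exp var} columnwise to get mean and variance bounds, apply Chebyshev's inequality, take a union bound over the $n$ columns, and integrate the resulting tail. The paper parametrizes the Chebyshev deviation slightly differently (using $k = t\sqrt{n}$ so the union-bounded tail is simply $1/t^2$), but this is a cosmetic difference and both give the same constant up to bookkeeping; your observation that the method is ``just barely'' enough at scale $t \sim n$ is exactly the right way to see why the argument closes.
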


\begin{remark}
  This result says that all columns of the matrix $BA$ have norm $O(\sqrt{n})$
  with high probability. Since the spectral norm of a matrix is bounded
  below by the norm of any column, this result is a necessary step in
  proving our desired estimate $\|BA\| = O(\sqrt{n})$.
\end{remark}

\begin{proof}
Let, as usual, $B_1,\ldots,B_N \in \R^n$ denote the columns of the matrix $B$,
and let $a_{ij}$ denote the entries of the matrix $A$.
Then
\begin{equation}                            \label{Xj}
  X_j = \sum_{i=1}^N a_{ij} B_i, \qquad j=1,\ldots,n.
\end{equation}
Let us fix $j \in \{1,\ldots,n\}$ and use Lemma~\ref{exp var}. This gives
\begin{equation}                            \label{E Xj Var Xj}
  \E \|X_j\|_2^2 \le n, \qquad
  \Var(\|X_j\|_2^2) \le 3 n.
\end{equation}
Now we use Chebychev's inequality, which states that for a random variable
$Z$ with $\s^2 = \Var(Z)$ and for an arbitrary $k > 0$, one has
$$
\P( |Z - \E Z| > k \s) \le \frac{1}{k^2}.
$$
Let $t > 0$ be arbitrary.
Using Chebychev's inequality along with \eqref{E Xj Var Xj}
for $Z = \|X_j\|_2^2$, $k = t \sqrt{n}$, we obtain
$$
\P \big( \|X_j\|_2^2 > (1+\sqrt{3} \, t) n \big) \le \frac{1}{t^2 n}.
$$
Taking the union bound over all $j=1,\ldots,n$, we conclude that
$$
\P \big( \max_{j=1,\ldots,n} \|X_j\|_2^2 > (1+\sqrt{3} \, t) n \big)
\le n \cdot \frac{1}{t^2 n} = \frac{1}{t^2}.
$$
Integration completes the proof.
\end{proof}

\medskip

\begin{proof}[Proof of Proposition~\ref{up to log}.]
Let $X_1,\ldots,X_n \in \R^n$ denote the columns of the matrix $BA$.
We are going to apply Lemma~\ref{rudelson arbitrary}. 
In order to check that condition \eqref{Xj tensor Xj} holds,
we consider an arbitrary vector $x \in S^{n-1}$ and use
representation \eqref{Xj} to compute
\begin{align*}
  \E \< X_j, x \> ^2
  &= \E \Big( \sum_{i=1}^N a_{ij} \< B_i, x \> \Big)^2
  = \sum_{i=1}^N \E(a_{ij}^2) \< B_i, x \> ^2
  \le \sum_{i=1}^N \< B_i, x \> ^2 \\
  &= \|B^* x\|_2^2
  \le \|B^*\|^2
  = \|B\|^2 \le 1.
\end{align*}
This shows that condition \eqref{Xj tensor Xj} holds. Lemma~\ref{rudelson arbitrary} then gives
$$
\E \|BA\|^2
= \E \Big\| \sum_{j=1}^n X_j \otimes X_j \Big\|
\le C n + C \log(2n) \, \E \max_{j=1,\ldots,n} \|X_j\|_2^2.
$$
Estimating the maximum in the right hand side using Lemma~\ref{columns BA},
we conclude that
$$
\E \|BA\|^2 \le C_1 n \log(2n).
$$
This completes the proof.
\end{proof}

\subsection{Tradeoff between the matrix norm and the magnitude of entries}
%..............................

We would like now to gain more control over the logarithmic factor than we have
in Proposition~\ref{up to log}.
Our next result establishes a tradeoff between the logarithmic factor and the
magnitude of the matrices $A$, $B$. It will be used in the proof of Theorem~\ref{small columns}.

\begin{proposition}                 \label{controlled columns}
  Let $a, b \ge 0$ and $N,n$ be positive integers.
  Let $A$ be an $N \times n$ matrix whose entries are random independent variables $a_{ij}$ with mean zero
  and such that
  $$
  \E a_{ij}^2 \le 1, \qquad |a_{ij}| \le a \qquad \text{for every } i,j.
  $$
  Let $B$ be an $n \times N$ matrix such that $\|B\| \le 1$, and whose columns satisfy
  $$
  \|B_i\|_2 \le b \qquad \text{for every } i.
  $$
  Then
  $$
  \E \|BA\| \le C (1 + a b^{1/2} \log^{1/4}(2n)) \sqrt{n}.
  $$
\end{proposition}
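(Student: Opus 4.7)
The plan is to apply Lemma~\ref{rudelson arbitrary} to the independent columns $X_1,\dots,X_n\in\R^n$ of $BA$, and then to sharpen the resulting bound on $\E\max_j\|X_j\|_2^2$ using Bennett-type concentration estimates that make essential use of both $|a_{ij}|\le a$ and $\|B_i\|_2\le b$.

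The setup is exactly as in Proposition~\ref{up to log}: the vectors $X_j=\sum_i a_{ij}B_i$ are independent and satisfy $\|\E X_j\otimes X_j\|\le\|B\|^2\le 1$, so Lemma~\ref{rudelson arbitrary} applies with $m=n$. I would revisit its proof to preserve the sharper intermediate inequality obtained before Cauchy--Schwarz is applied, namely
\[
\E\|BA\|^2\le n+C\sqrt{\log(2n)}\,\bigl(\E\max_j\|X_j\|_2^2\bigr)^{1/2}\bigl(\E\|BA\|^2\bigr)^{1/2}.
\]
Solving this as a quadratic in $\sqrt{\E\|BA\|^2}$ shows that a bound of the form $\E\max_j\|X_j\|_2^2\le Cn+Ca^2b\sqrt{n\log(2n)}$ will yield $\E\|BA\|^2\le C'n\bigl(1+a^2b\sqrt{\log(2n)}\bigr)$, and hence the claim by taking square roots and using the elementary inequality $(1+y)^{1/2}\le 1+y^{1/2}$.

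To produce the required bound on $\E\max_j\|X_j\|_2^2$, I would decompose
\[
\|X_j\|_2^2=\sum_i a_{ij}^2\|B_i\|_2^2+\sum_{i\ne k}a_{ij}a_{kj}\langle B_i,B_k\rangle.
\]
The diagonal sum has independent nonnegative summands bounded by $a^2b^2$ (using $\|B_i\|_2\le b$) with total variance at most $a^4b^2n$, so Bennett's inequality (Theorem~\ref{bennett}) gives sub-Gaussian concentration around its mean $\le n$ at scale $a^2b\sqrt n$ together with a sub-exponential tail at scale $a^2b^2$. The off-diagonal part is a diagonal-free quadratic form in bounded random variables which can be controlled by a standard decoupling plus Bennett argument using $\|B^TB\|_{\HS}^2\le\|B\|^2\|B\|_{\HS}^2\le n$ and $\|B^TB\|_{\mathrm{op}}\le 1$. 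Taking a union bound over $j=1,\dots,n$ at tail level $\log(2n)$ and integrating should then yield $\E\max_j\|X_j\|_2^2\le Cn+Ca^2b\sqrt{n\log(2n)}+\text{(lower-order)}$.

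The main obstacle is the delicate accounting of logarithmic factors. A naive use of Lemma~\ref{rudelson arbitrary} in its stated form only gives $\E\|BA\|\lesssim\sqrt{n\log(2n)}$ and misses the $\log^{1/4}(2n)$ improvement entirely. To recover the correct exponent I must both keep the Rudelson step in its multiplicative form (so that the $\sqrt{\E\|BA\|^2}$ factor survives on the right-hand side and can be absorbed) and ensure that the Bennett correction to $\E\max_j\|X_j\|_2^2$ contributes only $\sqrt{\log(2n)}$ (not a full $\log(2n)$) in its leading correction term. The interaction $\sqrt{\log(2n)}\cdot\sqrt{\log(2n)}$ then collapses into a single $\sqrt{\log(2n)}$ inside $\E\|BA\|^2$, whose square root is exactly the claimed $\log^{1/4}(2n)$.
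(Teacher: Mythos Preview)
Your approach has a genuine gap. Applying Rudelson's inequality at the level of the columns $X_j$ of $BA$ cannot beat the $\sqrt{n\log(2n)}$ bound of Proposition~\ref{up to log}, regardless of how well you estimate $M:=\E\max_j\|X_j\|_2^2$. Your intermediate inequality $E\le n+C\sqrt{\log(2n)}\,\sqrt{M}\,\sqrt{E}$ (with $E=\E\|BA\|^2$) resolves to $E\le 2n+C'\log(2n)\,M$. But $M\ge\E\|X_1\|_2^2=\sum_i\E a_{i1}^2\|B_i\|_2^2$, which equals $\|B\|_{\HS}^2$ when the entries have unit variance, and $\|B\|_{\HS}^2$ can equal $n$ irrespective of $a$ and $b$ (take $B$ with orthonormal rows and all column norms equal to $b$, which forces $N\sim n/b^2$). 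Thus the term $\log(2n)\cdot M$ already contributes $n\log(2n)$, and your claimed implication ``$M\le Cn+Ca^2b\sqrt{n\log(2n)}\Rightarrow E\le C'n(1+a^2b\sqrt{\log(2n)})$'' is false: you only recover $E\lesssim n\log(2n)$. In the regime $a^2b\sqrt{\log(2n)}=O(1)$, which is exactly the one used in Theorem~\ref{small columns}, this gives nothing beyond Proposition~\ref{up to log}.

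The paper's proof avoids this by \emph{not} using Rudelson on the $X_j$'s. It first Gaussianizes the entries (replacing $a_{ij}$ by $g_{ij}a_{ij}$ via \eqref{making gaussian}), conditions on $(a_{ij})$, and uses a direct $\e$-net argument with Gaussian concentration to obtain $\E_g\|BA\|\le CK\sqrt{n}$, where $K^2=\max_j\bigl\|\sum_i a_{ij}^2\,B_i\otimes B_i\bigr\|$. Rudelson's theorem is then invoked only to bound $\E_a K^2\le C(1+a^2b\sqrt{\log(2n)})$ (Lemma~\ref{max tensor sums}). The crucial difference is that in \emph{this} application of Rudelson the ``max norm'' factor is $\max_i\|B_i\|_2\le b$ rather than $\max_j\|X_j\|_2\sim\sqrt{n}$; that is precisely where the column hypothesis on $B$ buys the $\log^{1/4}$ improvement.
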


The proof will again be based on M.~Rudelson's Theorem~\ref{rudelson},
although this time we use Rudelson's theorem in a more delicate way:

\begin{lemma}                       \label{max tensor sums}
  Under the assumptions of Proposition~\ref{controlled columns}, we have
  $$
  \E \max_{j=1,\ldots,n} \Big\| \sum_{i=1}^N a_{ij}^2 B_i \otimes B_i \Big\|
  \le C (1 + a^2 b \sqrt{\log(2n)}).
  $$
\end{lemma}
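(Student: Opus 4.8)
The plan is to bound $\max_j \|\sum_i a_{ij}^2 B_i \otimes B_i\|$ by combining a symmetrization step with M.~Rudelson's Theorem~\ref{rudelson} applied conditionally. First I would write $a_{ij}^2 = \E a_{ij}^2 + (a_{ij}^2 - \E a_{ij}^2)$ and split the sum into a deterministic part $\sum_i (\E a_{ij}^2) B_i \otimes B_i$ and a fluctuation part $\sum_i Z_{ij} B_i \otimes B_i$ where $Z_{ij} := a_{ij}^2 - \E a_{ij}^2$ are independent, mean zero, and bounded by $|Z_{ij}| \le a^2$ (since $|a_{ij}| \le a$ and $a \ge 1$ WLOG, as the bound is trivial otherwise). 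The deterministic part is handled directly by \eqref{B HS}: $\|\sum_i (\E a_{ij}^2) B_i \otimes B_i\| \le \|\sum_i B_i \otimes B_i\| = \|BB^*\| = \|B\|^2 \le 1$, contributing the additive constant $1$ and costing no $j$-dependence at all.

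The main work is the fluctuation part. Here I would apply the symmetrization Lemma~\ref{symmetrization} with the Banach space of self-adjoint operators on $\R^n$ (spectral norm), the bounded variables $Z_{ij}/a^2$, and vectors $x_{ij} = a^2 B_i \otimes B_i$, to get
$$
\E \max_j \Big\| \sum_i Z_{ij} B_i \otimes B_i \Big\|
\le 2 \E \max_j \Big\| \sum_i \e_{ij} a^2 \cdot \frac{|Z_{ij}|}{a^2} B_i \otimes B_i \Big\|
$$
--- actually cleaner: since $|Z_{ij}| \le a^2$, symmetrization followed by the contraction principle (which is folded into the proof of Lemma~\ref{symmetrization} via the convexity/extreme-point argument) reduces matters to $2 a^2 \, \E \max_j \|\sum_i \e_{ij} B_i \otimes B_i\|$, with plain Bernoulli signs $\e_{ij}$. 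Now I condition on nothing (the $B_i$ are deterministic) and apply the moment form of Rudelson's Theorem~\ref{rudelson}: for each fixed $j$,
$$
\Big( \E_\e \Big\| \sum_i \e_{ij} B_i \otimes B_i \Big\|^p \Big)^{1/p}
\le C(\sqrt{p} + \sqrt{\log(2n)}) \cdot \max_i \|B_i\|_2 \cdot \Big\| \sum_i B_i \otimes B_i \Big\|^{1/2}
\le C(\sqrt{p} + \sqrt{\log(2n)}) \cdot b,
$$
using $\max_i \|B_i\|_2 \le b$ and $\|\sum_i B_i\otimes B_i\| = \|B\|^2 \le 1$. To pass from this per-$j$ moment bound to a bound on the expected maximum over $j \in \{1,\ldots,n\}$, take $p = \log(2n)$: then each $\|\sum_i \e_{ij}B_i\otimes B_i\|$ has its $L^p$ norm at most $Cb\sqrt{\log(2n)}$, and by the standard estimate $\E \max_{j \le n} \xi_j \le n^{1/p} \max_j (\E \xi_j^p)^{1/p} \le e \cdot C b \sqrt{\log(2n)}$ for nonnegative $\xi_j$. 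Multiplying by the $2a^2$ from symmetrization and adding the deterministic contribution $1$ gives $\E \max_j \|\sum_i a_{ij}^2 B_i \otimes B_i\| \le C(1 + a^2 b \sqrt{\log(2n)})$, as claimed.

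The step I expect to be the main obstacle --- or at least the one requiring care --- is the reduction via Lemma~\ref{symmetrization} to the Bernoulli sum with the clean bound $\max_i\|B_i\|_2 \le b$ in front, rather than something like $\max_i\|B_i\|_2^2$ or a worse power. The subtlety is that symmetrization naturally produces a factor $\max_{ij}|Z_{ij}|$, which we want to be exactly $a^2$ (not $a^2 b$ or $a^4$), so that after Rudelson's inequality supplies the extra factor $\max_i\|B_i\|_2 \le b$ (and not $b^2$, thanks to the square root structure $\|u_i\|_2 \cdot \|\sum u_i\otimes u_i\|^{1/2}$ with $u_i = B_i$), the total is $a^2 \cdot b \cdot \sqrt{\log(2n)}$. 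One must also be slightly careful that Lemma~\ref{symmetrization} is stated for general Banach spaces with the $\max_j$ inside the norm, so it applies verbatim here with the space of symmetric operators; the only nonroutine point is confirming that the $2^{1-k/2}$-type convexity argument in its proof really does let us replace $Z_{ij}$ by its sup bound without losing the $j$-structure, which it does. The rest --- the deterministic term, the choice $p=\log(2n)$, and the $n^{1/p}$ trick for the maximum --- is routine.
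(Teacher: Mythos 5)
Your proof is correct and follows essentially the same route as the paper's: decompose $a_{ij}^2$ into its mean plus a bounded mean-zero fluctuation, bound the deterministic term by $\|B\|^2 \le 1$, symmetrize via Lemma~\ref{symmetrization} to pick up the factor $2a^2$, and invoke Rudelson's Theorem~\ref{rudelson} with $u_i = B_i$ to extract $b\sqrt{\log(2n)}$. The only (cosmetic) divergence is in how you handle the maximum over $j$: the paper applies the tail form of Theorem~\ref{rudelson}, takes a union bound over the $n$ indices, and integrates, whereas you use the moment form with $p = \log(2n)$ and the elementary bound $\E\max_j \xi_j \le n^{1/p}\max_j(\E\xi_j^p)^{1/p}$ — two standard and equivalent ways of producing the same $\sqrt{\log n}$ factor.
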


\begin{proof}
Fix $j \in \{1,\ldots,n\}$.
Let $\mu_{ij}^2 := \E a_{ij}^2$.
By the triangle inequality,
\begin{equation}                                \label{making mean zero}
  \Big\| \sum_{i=1}^N a_{ij}^2 B_i \otimes B_i \Big\|
  \le  \Big\| \sum_{i=1}^N (a_{ij}^2 - \mu_{ij}^2) B_i \otimes B_i \Big\|
    + \Big\| \sum_{i=1}^N \mu_{ij}^2 B_i \otimes B_i \Big\|.
\end{equation}
Since $0 \le \mu_{ij}^2 \le 1$ and
\begin{equation}                                \label{Bj tensor Bj}
  \Big\| \sum_{i=1}^N B_i \otimes B_i \Big\| \le \|B\|^2 \le 1,
\end{equation}
we have
\begin{equation}                                \label{residual}
  \Big\| \sum_{i=1}^N \mu_{ij}^2 B_i \otimes B_i \Big\| 
  \le \Big\| \sum_{i=1}^N B_i \otimes B_i \Big\|
  \le 1.
\end{equation}
Next, clearly $\mu_{ij}^2 \le a^2$, so
$$
\E (a_{ij}^2 - \mu_{ij}^2) = 0, \qquad |a_{ij}^2 - \mu_{ij}^2| \le 2a^2.
$$
Symmetrization Lemma~\ref{symmetrization} yields 
\begin{equation}                                \label{symmetrized}
  \E \max_{j=1,\ldots,n} \Big\| \sum_{i=1}^N (a_{ij}^2 - \mu_{ij}^2) B_i \otimes B_i \Big\|
  \le 2 a^2 \, \E \max_{j=1,\ldots,n} \Big\| \sum_{i=1}^N \e_{ij} B_i \otimes B_i \Big\|
\end{equation}
where $\e_{ij}$ denote independent symmetric Bernoulli random variables.

Let $t > 0$. By the second part of M.~Rudelson's Theorem~\ref{rudelson}
and taking the union bound over $n$ random variables, we conclude that,
with probability at least $1 - 2n^2 e^{-ct^2}$, we have
$$
\max_{j=1,\ldots,n} \Big\| \sum_{i=1}^N \e_{ij} B_i \otimes B_i \Big\|
\le t \cdot \max_{i=1,\ldots,N} \|B_i\|_2 \cdot \Big\| \sum_{i=1}^N B_i \otimes B_i \Big\|^{1/2}
\le t b
$$
The second estimate follows from \eqref{Bj tensor Bj} and since
$\max_i \|B_i\|_2 \le b$ by the hypothesis.

Let $s > 0$ be arbitrary. We apply the above estimate for $t$ chosen so that
$2n^2 e^{-ct^2} = e^{-s^2}$. This shows that, with probability at least $1 - e^{-s^2}$,
one has
$$
\max_{j=1,\ldots,n} \Big\| \sum_{i=1}^N \e_{ij} B_i \otimes B_i \Big\|
\le t b \le C_1 b (\sqrt{\log(2n)} + s).
$$
Integration implies that
$$
\E \max_{j=1,\ldots,n} \Big\| \sum_{i=1}^N \e_{ij} B_i \otimes B_i \Big\|
\le C_2 b \sqrt{\log(2n)}.
$$
Putting this into \eqref{symmetrized} and, together with \eqref{residual},
back into \eqref{making mean zero}, we complete the proof.
\end{proof}

\medskip

\begin{proof}[Proof of Proposition~\ref{controlled columns}.]
By the symmetrization argument (see \eqref{making gaussian}), we can assume that
the entries of the matrix $A$ are $g_{ij} a_{ij}$, where $a_{ij}$ are random variables
satisfying the assumptions of the proposition, and $g_{ij}$ are independent standard
normal random variables. We will write $\E_g$, $\P_g$ when we take expectations and
probability estimates with respect to $(g_{ij})$ (i.e. conditioned on $(a_{ij})$),
and we write $\E_a$ to denote the expectation with respect to $(a_{ij})$.

By Lemma~\ref{max tensor sums}, the random variable
$$
K^2 := \max_{j=1,\ldots,n} \Big\| \sum_{i=1}^N a_{ij}^2 B_i \otimes B_i \Big\|
$$
which does not depend on the random variables $(g_{ij})$, has expectation
\begin{equation}                        \label{exp K}
  \E_a (K^2) \le C (1 + a^2 b \sqrt{\log(2n)}).
\end{equation}
We condition on the random variables $(a_{ij})$; this fixes a value of $K$.

Let $X_1,\ldots,X_n \in \R^n$ denote the columns of the matrix $BA$; then
$$
X_j = \sum_{i=1}^N g_{ij} a_{ij} B_i, \qquad j=1,\ldots,n.
$$
Consider a $(1/2)$-net $\NN$ of the unit Euclidean sphere $S^{n-1}$ of cardinality
$|\NN| \le 5^n$, which exists by Lemma~\ref{net cardinality}.
Using Proposition~\ref{norm on nets}, we have
\begin{equation}                            \label{BA on nets}
  \|BA\|^2  = \|(BA)^*\|^2
  \le 4 \max_{x \in \NN} \|(BA)^* x\|_2^2
  =  4 \max_{x \in \NN} \sum_{j=1}^n \< X_j, x \> ^2.
\end{equation}

Fix $x \in \NN$. For every $j=1,\ldots,n$, the random variable
$$
\< X_j, x \> = \sum_{i=1}^N g_{ij} \< a_{ij} B_i, x\>
$$
is a Gaussian random variable with mean zero and variance
$$
\sum_{i=1}^N \< a_{ij} B_i, x\> ^2
\le \Big\| \sum_{i=1}^N a_{ij}^2 B_i \otimes B_i \Big\|
\le K^2.
$$
(To obtain the first inequality, take the supremum over $x \in S^{n-1}$).
Therefore, by Corollary~\ref{exponential deviation} with 
$d_i = (\Var \< X_i, x \> )^{1/2} \le K$, we have
for every $t > 0$:
$$
\P_g \Big\{ \Big(\sum_{j=1}^n \< X_j, x \> ^2 \Big)^{1/2} > K \sqrt{n} + t \Big\}
\le e^{-c_0 t^2 / K^2}.
$$
Let $s > 0$ be arbitrary. The previous estimate for $t = s K \sqrt{n}$ gives
$$
\P_g \Big\{ \Big(\sum_{j=1}^n \< X_j, x \> ^2 \Big)^{1/2} > (1+s) K \sqrt{n} \Big\}
\le e^{-c_0 s^2 n}.
$$
Taking the union bound over $x \in \NN$ and using \eqref{BA on nets}, we obtain
$$
\P_g \big\{ \|BA\| > 2 (1+ s) K \sqrt{n} \big\}
\le |\NN| e^{-c_0 s^2 n}
= 5^n e^{-c_0 s^2 n}
\le e^{(2 - c_0 s^2)n}.
$$
Integration yields
$$
\E_g \|BA\| \le C K \sqrt{n}.
$$
Finally, we take expectation with respect to the random variables $(a_{ij})$
and use \eqref{exp K} to conclude that
$$
\E \|BA\| \le C \E_a(K) \sqrt{n}
\le C_1 \big(1 + a^2 b \sqrt{\log(2n)} \big)^{1/2} \sqrt{n}.
$$
This completes the proof.
\end{proof}

\subsection{Theorem~\ref{main} for logarithmically small columns}
%..............................

Our next step is to combine Propositions~\ref{up to log} and \ref{controlled columns}
and obtain a weaker version of the main Theorem~\ref{main} -- this time with the correct
bound $O(\sqrt{n})$ on the norm,
but under the additional assumption that the columns of the matrix $B$ are
logarithmically small.

\begin{theorem}                 \label{small columns}
  Let $\e \in (0,1)$ and let $N,n$ be positive integers.
  Consider an $N \times n$ random matrix $A$ whose entries are 
  independent random variables with mean zero and $(4+\e)$-th moment bounded by $1$.
  Let $B$ be an $n \times N$ matrix such that $\|B\| \le 1$, and whose columns satisfy
  for some $M \ge 1$ that 
  $$
  \|B_i\|_2 \le M \log^{-\frac{1}{2}-\frac{1}{\e}} (2n) \qquad \text{for every } i.
  $$
  Then
  $$
  \E \|BA\| \le C M^{1/2} \sqrt{n}.
  $$
\end{theorem}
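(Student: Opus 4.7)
The plan is to combine a truncation of $A$ with both Propositions~\ref{up to log} and \ref{controlled columns} developed earlier in this section. First, by the symmetrization argument of Section~\ref{s: prelims}, I may assume that each entry $a_{ij}$ is a symmetric random variable, so that any magnitude truncation of $a_{ij}$ remains mean-zero with independent entries.

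Next I would set $a := c\log^{1/(2\e)}(2n)$ (with $c$ depending only on $\e$) and decompose $A = A^{<} + A^{>}$, where $A^{<}_{ij} = a_{ij}\one_{|a_{ij}|\le a}$ and $A^{>}_{ij} = a_{ij}\one_{|a_{ij}| > a}$. The bounded part satisfies $|A^{<}_{ij}|\le a$ and $\E(A^{<}_{ij})^2 \le 1$; with $b = M\log^{-1/2-1/\e}(2n)$, the product $a\, b^{1/2}\log^{1/4}(2n)$ equals $M^{1/2}$ by direct computation, so Proposition~\ref{controlled columns} yields $\E\|BA^{<}\| \le C M^{1/2}\sqrt{n}$. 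By Lemma~\ref{truncation} (applied with $p=4+\e$) the tail satisfies $\E(A^{>}_{ij})^2 \le a^{-(2+\e)}$ and $\E(A^{>}_{ij})^4 \le a^{-\e}$.

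The main obstacle is bounding $\E\|BA^{>}\|$. A direct application of Proposition~\ref{up to log}, after rescaling $A^{>}$ to unit fourth moment, produces $\E\|BA^{>}\| \le C\, a^{-\e/4}\sqrt{n\log(2n)} = C\log^{3/8}(2n)\sqrt{n}$, which exceeds the target $CM^{1/2}\sqrt{n}$ when $M$ is close to $1$; closing this logarithmic gap is the delicate part of the proof. To do so I would iterate the truncation: decompose the tail dyadically as $A^{>} = \sum_{k\ge 1} A^{(k)}$ with $A^{(k)}_{ij} = a_{ij}\one_{2^{k-1}a < |a_{ij}| \le 2^k a}$, rescale each shell by its own standard deviation $(2^{k-1}a)^{-(2+\e)/2}$ to unit variance, and apply Proposition~\ref{controlled columns} shell by shell; the very far tail (entries beyond a second threshold $a_K$) would then be handled by the rescaled Proposition~\ref{up to log}. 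The sparsity of heavy entries together with the smallness of the columns of $B$ should make the shell contributions summable, and the exponent $-\tfrac{1}{2}-\tfrac{1}{\e}$ in the column-smallness hypothesis is precisely tuned so that this balance between the bounded part and the iterated tail produces the desired bound $CM^{1/2}\sqrt{n}$.
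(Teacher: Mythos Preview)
Your single truncation at level $a=\log^{1/(2\e)}(2n)$ and the treatment of $A^{<}$ via Proposition~\ref{controlled columns} match the paper's proof exactly. For the tail $A^{>}$ the paper does \emph{not} iterate: it notes $\E\tilde{a}_{ij}^{4}\le a^{-\e}$, rescales, and applies Proposition~\ref{up to log} once, writing $\E\|B\tilde A\|\le C a^{-\e}\sqrt{n\log(2n)}=C\sqrt{n}$. Your suspicion of this step is well founded: the correct rescaling factor is $a^{\e/4}$ (so that the fourth moment of $a^{\e/4}\tilde{a}_{ij}$ is at most $1$), and a black-box application of Proposition~\ref{up to log} then only gives $\E\|B\tilde A\|\le Ca^{-\e/4}\sqrt{n\log(2n)}=C\log^{3/8}(2n)\sqrt n$, precisely the gap you identify.

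Your dyadic repair, however, does not close this gap. Proposition~\ref{controlled columns} is blind to sparsity; it sees shell $k$ only through the sup-norm bound $|A^{(k)}_{ij}|\le 2^k a$ and a second-moment bound, and whether or not you rescale to unit variance, the decisive term contributes $(2^{k}a)\,b^{1/2}\log^{1/4}(2n)\,\sqrt n \asymp 2^kM^{1/2}\sqrt n$ to $\E\|BA^{(k)}\|$. These grow geometrically, so no cutoff $K$ balances the near shells against the far tail (which, via Proposition~\ref{up to log}, needs $2^{K}\gtrsim\log^{3/(2\e)}(2n)$, making the near-shell sum already of order $\log^{3/(2\e)}(2n)\,M^{1/2}\sqrt n$). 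The simplest repair keeps the paper's two-piece split but moves the threshold to $a=\log^{2/\e}(2n)$: then Proposition~\ref{up to log} honestly gives $\E\|BA^{>}\|\le Ca^{-\e/4}\sqrt{n\log(2n)}=C\sqrt n$, and Proposition~\ref{controlled columns} still gives $\E\|BA^{<}\|\le CM^{1/2}\sqrt n$ provided the column-smallness exponent is strengthened from $-\tfrac12-\tfrac1\e$ to $-\tfrac12-\tfrac4\e$, which is harmless for the downstream application in the proof of Theorem~\ref{main}.
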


\begin{proof}
By the symmetrization argument described in Section~\ref{s: prelims}, we can assume without loss
of generality that all entries $a_{ij}$ of the matrix $A = (a_{ij})$ are symmetric random variables.
Let
$$
a := \log^{\frac{1}{2\e}} (2n).
$$
We decompose every entry of the matrix $A$ according to its absolute value as
$$
\bar{a}_{ij} := a_{ij} \one_{\{ |a_{ij}| \le a \}}, \qquad
\tilde{a}_{ij} := a_{ij} \one_{\{ |a_{ij}| > a \}}.
$$
Then all random variables $\bar{a}_{ij}$ and $\tilde{a}_{ij}$ have mean zero,
and we have the following decomposition of matrices:
$$
BA = B \bar{A} + B \tilde{A}, \qquad
\text{where }
\bar{A} = (\bar{a}_{ij}), \;
\tilde{A} = (\tilde{a}_{ij}).
$$

The norm of $B \tilde{A}$ can be bounded using Proposition~\ref{up to log}.
Indeed, by the Truncation Lemma~\ref{truncation} with $p = 1 + \e/4$, we have
$$
\E \tilde{a}_{ij}^4
= \E a_{ij}^4 \one_{\{a_{ij}^4 > a^4\}}
\le \frac{\E a_{ij}^{4+\e}}{a^\e}
\le a^{-\e},
$$
where the last inequality follows from the moment hypothesis.
Therefore, the matrix $a^\e \tilde{A}$ satisfies the hypothesis of Proposition~\ref{up to log},
which then yields
$$
\E \|B \tilde{A}\| \le C a^{-\e} \sqrt{n \log(2n)} = C \sqrt{n}.
$$

The norm of $B \bar{A}$ can be bounded using Proposition~\ref{controlled columns},
which we can apply with $a$ as above and $b = M \log^{-\frac{1}{2}-\frac{1}{\e}}(2n)$. This gives
$$
\E \|B \bar{A}\| \le C (1 + a b^{1/2} \log^{1/4}(2n)) \sqrt{n}
\le 2 C M^{1/2} \sqrt{n},
$$
where the last inequality follows by our choice of $a$ and $b$.

Putting the two estimates together, we conclude by the triangle inequality that
$$
\E \|BA\| \le \E \|B \bar{A}\| + \E \|B \tilde{A}\|
\le C' M^{1/2} \sqrt{n}.
$$
This completes the proof.
\end{proof}

\begin{remark}
The factor $M^{1/2}$ in the conclusion of Theorem~\ref{small columns}
can easily be improved to about $M^{\e/2}$ by choosing 
$a = t \log^{\frac{1}{2\e}} (2n)$ in the proof and optimizing in $t$. 
We will not need this improvement in our argument.
\end{remark}

\section{Approach via concentration}                        \label{s: concentration}
%----------------------------------------------

In this section, we develop an alternative way to bound the norm of $BA$,
which rests on Gaussian concentration inequalities and elaborate
choice of $\e$-nets. 
The main technical result of this section is the following theorem, which, like
Theorem~\ref{small columns}, gives the correct bound $O(\sqrt{n})$ under
some boundedness assumptions on the entries of $A$. 

\begin{theorem}                         \label{small aij}
  Let $\e \in (0,1)$, $M \ge 1$ and let $N \ge n$ be positive integers such that $\log(2N) \le M n$.
  Consider an $N \times n$ random matrix $A$
  whose entries are independent random variables
  $a_{ij}$ with mean zero and such that
  $$
  \E |a_{ij}|^{2+\e} \le 1, \qquad
  |a_{ij}| \le \Big( \frac{Mn}{\log(2N)} \Big)^{\frac{1}{2+\e}} \qquad
  \text{for every } i,j.
  $$
  Let $B$ be an $n \times N$ matrix such that $\|B\| \le 1$.
  Then
  $$
  \E \|BA\| \le C(\e) \sqrt{M n}
  $$
  where $C(\e)$ depends only on $\e$.
\end{theorem}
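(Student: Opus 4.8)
The plan is to follow the outline sketched in the introduction: after the Gaussian symmetrization step \eqref{making gaussian} we reduce to matrices with entries $g_{ij}a_{ij}$, and then decompose $A$ dyadically according to the magnitude of its entries, writing $A = \sum_{k} A_k$ where $A_k$ collects the entries $a_{ij}$ with $|a_{ij}|$ in a dyadic window $[2^{k-1},2^k)$ (together with a bottom level for $|a_{ij}| < 1$, say). The key point is that the $(2+\e)$-th moment bound forces the $k$-th level matrix to be \emph{sparse}: by Markov's inequality, $\P(|a_{ij}| \ge 2^{k-1}) \le 2^{-(k-1)(2+\e)}$, so the sign pattern of $A_k$ is (stochastically dominated by) a random $\{-1,0,1\}$ matrix of density $p_k \asymp 2^{-k(2+\e)}$, scaled by the magnitude $\asymp 2^k$. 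The number of relevant levels is $O(\log(Mn/\log(2N)))= O(\log n)$ because of the truncation hypothesis $|a_{ij}| \le (Mn/\log(2N))^{1/(2+\e)}$.

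The main work is then to bound $\E\|B A_k\|$ for each level. The plan is to invoke the ``norm of sparse matrices'' machinery, i.e. Corollary~\ref{norm sparse matrices} referred to at the end of the introduction (obtained via the entropy--concentration tradeoff over classes of sparse/spread vectors on $S^{n-1}$). Concretely: for each fixed unit vector $x$, the column sums $\langle (BA_k)^* \!\ast, x\rangle$ are, conditionally on $A_k$, Gaussian with variance controlled by $\|\sum_i a_{ij}^2 B_i\otimes B_i\|$, so Corollary~\ref{exponential deviation} gives a concentration bound for $\|(BA_k)^*x\|_2$; one then pays an entropy cost for the net, using Lemma~\ref{net cardinality} for spread vectors and a sparser net for sparse vectors, and optimizes the tradeoff. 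The density $p_k$ of $A_k$ enters the concentration estimate favorably (sparser matrices concentrate better), and this is exactly what compensates for the magnitude $2^k$ and the $\log(2N)$-many columns that must be union-bounded over. Summing the geometric-type series over $k$ and using $\sum_k 2^k p_k^{1/2} = \sum_k 2^{k}2^{-k(1+\e/2)} < \infty$ produces the bound $C(\e)\sqrt{Mn}$; the factor $\sqrt M$ appears through the truncation level and the hypothesis $\log(2N) \le Mn$, which guarantees the truncation level is at least $1$ and keeps the number of dyadic levels $O(\log n)$.

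The main obstacle, as the authors themselves flag in Remark~4, is that none of this is available by a naive net argument: the entries of $W = BA_k$ are \emph{not} independent, so the combinatorial trace method for sparse matrices does not apply to $W$, and the only independence we have is that of the $g_{ij}$ (after conditioning on $A_k$) and of the entries of $A_k$ themselves. The delicate part is therefore the entropy--concentration bookkeeping for $(BA_k)^*x$ when $x$ is sparse: one must split $x$ according to its support size, note that a sparse $x$ only ``sees'' the corresponding columns of $B$ and hence a Hilbert--Schmidt-type quantity $\|\sum_{i\in S} a_{ij}^2 B_i\otimes B_i\|$ that is controlled via \eqref{B HS} and Lemma~\ref{max tensor sums}, and choose the net scale per class so that $(\text{net cardinality})\cdot(\text{tail probability})$ stays summable. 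Executing this requires Talagrand's inequality (Theorem~\ref{talagrand}) or Bennett's inequality (Theorem~\ref{bennett}) to control the random Hilbert--Schmidt norms $\|\sum_i a_{ij}^2 (B_i)_S\otimes (B_i)_S\|$ uniformly over the $\binom{n}{|S|}$ coordinate subsets $S$, and this uniform control over subsets is where the $(2+\e)$-moment hypothesis (as opposed to a mere second moment) is genuinely needed.
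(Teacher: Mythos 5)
Your proposal follows essentially the same route as the paper: symmetrize, decompose $A$ dyadically by entry magnitude into pieces $A^{(k)}$, use Markov and the $(2+\e)$-th moment to obtain sparsity $p_k\asymp 2^{-(2+\e)k}$ at the $k$-th level, invoke Corollary~\ref{norm sparse matrices} level by level, and sum the resulting geometric series; the $\sqrt{M}$ emerges exactly as you say, because $\log(2N)\le Mn$ forces $np_k+\log(2N)\lesssim Mnp_k$ once $p_k$ is bounded below at the top truncation level. Your closing paragraph, which speculates about what goes into proving Corollary~\ref{norm sparse matrices}, is a bit off: the paper does not need any control of subset-restricted operator norms $\|\sum_{i\in S}a_{ij}^2 B_i\otimes B_i\|$ uniformly over $S$ (the sparse direction $x\in\R^n$ restricts columns of $A$, not columns of $B$, so the relevant variance is still $\sum_{i=1}^N a_{ij}^2x_j^2$ with the sum over all $i$), and Lemma~\ref{max tensor sums} belongs to the Rudelson-theorem approach of Section~3, not to this argument; instead, a single scalar random parameter $K$ from Lemma~\ref{rows cols} (via Bennett) controls conditions~\eqref{aij}, and the subset structure enters only through the union bound over supports in Proposition~\ref{sparse}. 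Since Corollary~\ref{norm sparse matrices} is being invoked rather than reproved, this confusion does not create a gap in your proposed proof of the theorem itself.
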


\begin{remarks}

  {\bf 1.} If the entries $a_{ij}$ have bounded $(4+\e)$-th moment, it is easy to check that 
  $\max_{ij} a_{ij}\sim (nN)^\frac{1}{4+\e}$ holds with high probability. Therefore, 
  under the $(4+\e)$-th moment assumption, the hypotheses of 
  Theorem~\ref{small aij} are satisfied for almost square matrices, i.e. those for which 
  $N \le n^{1+c\e}$. This will quickly yield the main Theorem~\ref{main} for almost 
  square matrices, see Corollary~\ref{almost square} below.

  \medskip

  {\bf 2.} The hypotheses of Theorem~\ref{small aij} are almost sharp when $N \sim n$.  
  Indeed, let us assume for simplicity that the random variables $a_{ij}$ are identically distributed
  and $B$ is the identity matrix.
  The $(2+\e)$-th moment hypothesis is almost sharp: if $\E a_{ij}^2 \gg 1$ then 
  $(\E \|A\|^2)^{1/2} \ge \big( \frac{1}{n} \|A\|_\HS^2 \big)^{1/2} \gg \sqrt{n}$.
  Also, the boundedness hypothesis is almost sharp, since 
  $\|A\| \ge \max_{i,j} |a_{ij}|$.

  \medskip

  {\bf 3. } Using M.~Talagrand's concentration result, Theorem~\ref{talagrand}, 
  one can also obtains tail bounds for the norm $\|BA\|$:

\end{remarks}

\begin{corollary}							\label{small aij tail}
  Under the assumptions of Theorem~\ref{small aij}, one has for every $t > 0$:
  $$
  \P \big( \|BA\| > (C(\e) + t) \sqrt{M n} \big) \le 4e^{-t^2/4}.
  $$
  In particular, one has for every $q \ge 1$:
  $$
  (\E \|BA\|^{q})^{1/q} \le C_0(\e) \sqrt{q M n}.
  $$
\end{corollary}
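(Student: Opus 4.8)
The plan is to obtain the tail bound by applying M.~Talagrand's concentration inequality for convex Lipschitz functions (Theorem~\ref{talagrand}) to the map $A \mapsto \|BA\|$, and then to deduce the moment estimate by integrating the tail. To set this up, regard $\|BA\|$ as a function of the $Nn$ entries of $A$: let $f : \R^{Nn} \to \R$ be defined by $f\big((x_{ij})\big) = \|BX\|$, where $X$ is the matrix with entries $x_{ij}$. First I would verify the two structural properties Talagrand's theorem requires. Convexity is immediate: $X \mapsto BX$ is linear and $\|\cdot\|$ is a norm, so $f$ is a norm composed with a linear map. For the Lipschitz constant, note that for matrices $X, X'$ one has
$$
\big| f(X) - f(X') \big| \le \|B(X-X')\| \le \|B\| \, \|X-X'\| \le \|X - X'\|_\HS,
$$
and $\|X - X'\|_\HS$ is exactly the Euclidean norm of the vector of entrywise differences; thus $f$ is $1$-Lipschitz on $\R^{Nn}$ with its standard Euclidean structure.

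Next I would record that the entries are bounded by $K := \big( \frac{Mn}{\log(2N)} \big)^{1/(2+\e)}$ and that $K \le C\sqrt{Mn}$ for an absolute constant $C$: indeed $Mn \ge 1$ and $\log(2N)$ is bounded below by an absolute constant, while $1/(2+\e) \le 1/2$, so $K \le C (Mn)^{1/(2+\e)} \le C\sqrt{Mn}$ — this is essentially the only place the standing hypothesis $\log(2N) \le Mn$ is used, namely to keep $K$ below $\sqrt{Mn}$. Applying Theorem~\ref{talagrand} to $f$ and the independent, $K$-bounded variables $a_{ij}$ gives for every $t > 0$
$$
\P\big( \big| \|BA\| - \E\|BA\| \big| > K t \big) \le 4 e^{-t^2/4},
$$
and since $K t \le C\sqrt{Mn}\,t$ the same estimate holds with $C\sqrt{Mn}\,t$ in place of $Kt$. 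Combining this with the expectation bound $\E\|BA\| \le C(\e)\sqrt{Mn}$ supplied by Theorem~\ref{small aij} — so that the event $\{\|BA\| > (C(\e)+t)\sqrt{Mn}\}$ is contained in $\{\|BA\| - \E\|BA\| > t\sqrt{Mn}\}$ — and renaming constants yields the asserted inequality $\P\big(\|BA\| > (C(\e)+t)\sqrt{Mn}\big) \le 4e^{-t^2/4}$.

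For the moment bound I would integrate this tail in the standard way. Writing $\E\|BA\|^q = \int_0^\infty q v^{q-1}\,\P(\|BA\| > v)\,dv$ and splitting at $v_0 = 2C(\e)\sqrt{Mn}$, the portion $v \le v_0$ contributes at most $v_0^q = \big(2C(\e)\big)^q (Mn)^{q/2}$, while for $v > v_0$ one has $t := v/\sqrt{Mn} - C(\e) \ge v/\big(2\sqrt{Mn}\big)$, so $\P(\|BA\| > v) \le 4 e^{-v^2/(16 Mn)}$; the Gaussian integral $\int_0^\infty q v^{q-1} e^{-v^2/(16Mn)}\,dv$ evaluates to a constant times $q\,(16Mn)^{q/2}\Gamma(q/2)$. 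Using $\Gamma(q/2)^{1/q} \le C\sqrt{q}$ and taking $q$-th roots of the sum of the two contributions gives $(\E\|BA\|^q)^{1/q} \le C_0(\e)\sqrt{qMn}$.

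I do not anticipate a genuine obstacle: the argument is the routine pattern ``concentration $+$ expectation bound $\Rightarrow$ tail, then integrate $\Rightarrow$ moments''. The only points needing a little care are the elementary estimate $K \le C\sqrt{Mn}$ (which is exactly where $\log(2N)\le Mn$ is invoked) and the bookkeeping of absolute constants when passing from deviations on the $K$-scale to deviations on the $\sqrt{Mn}$-scale, and when bounding $\Gamma(q/2)$ via Stirling.
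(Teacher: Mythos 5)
Your proof is correct and follows the paper's own route exactly: apply Talagrand's Theorem~\ref{talagrand} to the convex $1$-Lipschitz function $A \mapsto \|BA\|$ on $\R^{Nn}$ (Lipschitz via $\|BA\| \le \|B\|\,\|A\|_{\HS}$), combine with the expectation bound of Theorem~\ref{small aij} to obtain the tail, and integrate for the moments. The one cosmetic difference is that the paper observes directly that the standing hypotheses $\log(2N) \le Mn$ and $\tfrac{1}{2+\e} \le \tfrac12$ give $|a_{ij}| \le \sqrt{Mn}$, so Talagrand applies at scale exactly $\sqrt{Mn}$ and yields the stated exponent $-t^2/4$ on the nose, whereas your looser estimate $K \le C\sqrt{Mn}$ would perturb the absolute constant in the exponent unless tightened in the same way.
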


\begin{proof}
We can consider the $N \times n$ matrix $A$ as a vector in $\R^{Nn}$. 
The Euclidean norm of such a vector equals the Hilbert-Schmidt norm $\|A\|_\HS$.
Since $\|BA\| \le \|B\| \|A\| \le 1 \cdot \|A\|_\HS$,
the function $f :\; \R^{Nn} \to \R$ defined by $f(A) = \|BA\|$ is $1$-Lipschitz
and convex. Since we have $|a_{ij}| \le \sqrt{Mn}$ for all $i,j$ by the assumptions,
M.~Talagrand's Theorem~\ref{talagrand} gives
$$
\P \big( \|BA\| - \E \|BA\| > t \sqrt{Mn} \big) \le 4 e^{-t^2/4}, \qquad t > 0.
$$
The estimate for $\E \|BA\|$ in Theorem~\ref{small aij} completes the proof.
\end{proof}

\subsection{Sparse matrices: rows and columns}
%........................................

Theorem~\ref{small aij} will follow from our analysis of sparse matrices.
We will decompose the entries $a_{ij}$ according to their magnitude.
As the magnitude increases, the moment assumptions will ensure that there will
be fewer such entries, i.e. the resulting matrix becomes sparser.

We start with an elementary lemma, which will help us analyze
the magnitude of the rows and columns of the
matrix $BA$ when $A$ is a sparse matrix.

\begin{lemma}                       \label{rows cols}
  Let $N, n$ be positive integers. 
  Consider independent random variables $a_{ij}$, $i=1,\ldots,N$, $j=1,\ldots,n$.
  Let $p \in (0,1]$, and suppose that
  $$
  \E a_{ij}^2 \le p, \qquad
  |a_{ij}| \le 1 \qquad
  \text{for every } i,j.
  $$
  Let $B$ be an $n \times N$ matrix such that $\|B\| \le 1$, whose columns
  are denoted $B_i$. Then
  \begin{align}
    &\E \max_{i=1,\ldots,N} \sum_{j=1}^n a_{ij}^2 \le C(np + \log(2N)),      \label{max i} \\
    &\E \max_{j=1,\ldots,n} \sum_{i=1}^N a_{ij}^2 \|B_i\|_2^2 \le C(np + \log(2n)).    \label{max j}
  \end{align}
\end{lemma}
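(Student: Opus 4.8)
Both inequalities have the same shape, so the plan is to treat them in parallel by an abstract statement. Write $S_k = \sum_\ell Z_{k\ell}$, where for each fixed $k$ the variables $Z_{k\ell}$ (with $\ell$ ranging over the inner index) are independent, nonnegative, and bounded by $1$. For \eqref{max i} I would take $Z_{ij}=a_{ij}^2$ with $j=1,\dots,n$: then $0\le Z_{ij}\le 1$, $\sum_j \E Z_{ij}\le np$, and $\sum_j \Var Z_{ij}\le \sum_j \E a_{ij}^4 \le \sum_j \E a_{ij}^2 \le np$, using $a_{ij}^4\le a_{ij}^2$. For \eqref{max j} I would take $Z_{ij}=a_{ij}^2\|B_i\|_2^2$ with $i=1,\dots,N$: here $0\le Z_{ij}\le 1$ because $\|B_i\|_2\le\|B\|\le 1$, and by \eqref{B HS} one has $\sum_i \E Z_{ij}\le p\sum_i\|B_i\|_2^2=p\|B\|_\HS^2\le pn$ and likewise $\sum_i \Var Z_{ij}\le p\sum_i\|B_i\|_2^4\le pn$. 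Thus, writing $\mu$ for the common bound $np$ on the mean and on the variance and $\nu\in\{N,n\}$ for the number of values of $k$, everything reduces to showing $\E\max_k S_k\le C(\mu+\log(2\nu))$.

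For a fixed $k$ I would center: $S_k-\E S_k=\sum_\ell (Z_{k\ell}-\E Z_{k\ell})$ is a sum of independent mean-zero variables with $|Z_{k\ell}-\E Z_{k\ell}|\le 1$ and total variance at most $\mu$. Since $\s^2\mapsto \s^2 h(t/\s^2)$ is nonincreasing — so Bennett's bound may be used with the variance proxy $\mu$ in place of the true variance — Bennett's inequality (Theorem~\ref{bennett}) gives, for $s>\mu$,
$$
\P(S_k>s)\le \P\Big(\sum_\ell (Z_{k\ell}-\E Z_{k\ell})>s-\mu\Big)\le \exp\big(-\mu\,h((s-\mu)/\mu)\big).
$$
Using the elementary bound $h(u)\ge c_1 u$ for $u\ge 1$ (valid with $c_1=2\log 2-1$, since $h$ and $u\mapsto h(u)/u$ are both nondecreasing on $[0,\infty)$; alternatively one may invoke $h(u)\ge u^2/(2+2u/3)$), this yields $\P(S_k>s)\le e^{-c_1(s-\mu)}\le e^{-c_1 s/2}$ whenever $s\ge 2\mu$.

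Finally, a union bound over the $\nu$ indices gives $\P(\max_k S_k>s)\le \nu e^{-c_1 s/2}$ for $s\ge 2\mu$. I would set $L:=2\mu+(4/c_1)\log(2\nu)$; for $s\ge L$ the prefactor $\nu$ is absorbed and $\P(\max_k S_k>s)\le e^{-c_1(s-L)/2}$. Integrating the layer-cake formula for the nonnegative variable $\max_k S_k$, $\E\max_k S_k=\int_0^\infty \P(\max_k S_k>s)\,ds\le L+\int_L^\infty e^{-c_1(s-L)/2}\,ds\le L+2/c_1\le C(\mu+\log(2\nu))$, where the additive constant is absorbed using $\log(2\nu)\ge\log 2$. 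Feeding back the two choices $(\mu,\nu)=(np,N)$ and $(np,n)$ proves \eqref{max i} and \eqref{max j} respectively.

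The argument is essentially routine; the two points needing a little care are (a) legitimizing the use of Bennett's inequality with only an \emph{upper} bound on the variance, which is exactly the monotonicity of $\s^2 h(t/\s^2)$, and (b) the calibration of the cutoff $L$: splitting the integral into the range $s\le 2\mu$ (where one just bounds the probability by $1$, contributing the $np$ term) and the range $s\ge 2\mu$ (where the exponential tail, after killing the union-bound factor $\nu$, contributes the $\log(2\nu)$ term) is what produces the \emph{additive} bound $np+\log(2N)$ rather than a multiplicative one.
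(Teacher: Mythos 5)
Your proof is correct and follows essentially the same route as the paper: bound the mean and variance of each row/column sum using $|a_{ij}|\le 1$ and \eqref{B HS}, apply Bennett's inequality to get an exponential tail for $s\gtrsim np$, take a union bound over the $N$ (resp. $n$) indices, and integrate the tail. The only cosmetic differences are that the paper applies Bennett directly with the true variance $\sigma^2\le np$ (so it needs only $h(u)\ge cu$ for $u\ge 1$, not the monotonicity of $\sigma^2\mapsto\sigma^2 h(t/\sigma^2)$) and inserts a harmless factor $\tfrac12$ in the centered summands.
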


\begin{remark}
  The test case for this lemma, as well as for most of the results that follow, 
  is the random variables $a_{ij}$ with values in $\{-1,0,1\}$ and such that 
  $\P (a_{ij} \ne 0) = p$. The $N \times n$ random matrix $A = (a_{ij})$ will then become sparser as
  we decrease $p$; it will have on average $np$ nonzero entries per row. 
  Estimate \eqref{max i} gives a bound on the Euclidean norm of all rows of $A$.
\end{remark}

\begin{proof}
We will only prove inequality \eqref{max j}; the proof of inequality \eqref{max i} is similar.
By the assumptions, we have
$$
\Var(a_{ij}^2) \le \E a_{ij}^4 
\le \E a_{ij}^2
\le p \qquad
\text{for every } i,j.
$$
Also, recall that \eqref{B HS} gives
$$
\sum_{i=1}^N \|B_i\|_2^2 \le n, \qquad
\sum_{i=1}^N \|B_i\|_2^4 \le \max_i \|B_i\|_2^2 \cdot \sum_{i=1}^N \|B_i\|_2^2 \le n.
$$
Consider the sums of independent random variables
$$
S_j := \sum_{i=1}^N a_{ij}^2 \|B_i\|_2^2, \qquad
j=1,\ldots,n.
$$
The above estimates show that for every $j$ we have
$$
\E S_j = \sum_{i=1}^N \E(a_{ij}^2) \|B_i\|_2^2 \le np, \qquad
\Var(S_j) = \sum_{i=1}^N \Var(a_{ij}^2) \|B_i\|_2^4 \le np.
$$

We apply Bennett's inequality, Theorem~\ref{bennett}, for
$X_i = \frac{1}{2} \big( a_{ij}^2 - \E a_{ij}^2 \big) \|B_i\|_2^2$,
which clearly satisfy $|X_j| \le 1$ because $|a_{ij}| \le 1$
and $\|B_i\|_2 \le 1$ by \eqref{B HS}. We obtain
\begin{equation}                        \label{from bennett}
  \P \big\{ \frac{1}{2} (S_j - \E S_j) > t \big\}
   \le \exp \big( -\s^2 h(t/\s^2) \big)
\end{equation}
where $\E (\frac{1}{2}S_j) \le np$ and $\s^2 = \Var(\frac{1}{2}S_j) \le np$.
Note that $h(x) \ge cx$ for $x \ge 1$, where $c$ is some positive absolute constant.
Therefore, if $t \ge np$, then $\s^2 h(t/\s^2) \ge ct$, so
\eqref{from bennett} yields
$$
\P \{ S_j > 2t \} \le e^{-ct}  \qquad \text{for } t \ge np.
$$
Taking the union bound over all $j$, we conclude that
$$
\P \big\{ \max_{j=1,\ldots,n} S_j > 2t \big\}
\le n e^{-ct}  \qquad \text{for } t \ge np.
$$
Now let $s \ge 1$ be arbitrary, and use the last inequality for
$t = (np + \log(2n)) s$. We obtain
$$
\P \big\{ \max_{j=1,\ldots,n} S_j > 2(np + \log(2n)) s \big\}
\le n e^{- c \log(2n) s}
= 2^{-cs} n^{1-cs}.
$$
Integration yields
$$
\E \max_{j=1,\ldots,n} S_j \le C (np + \log(2n)).
$$
This completes the proof of \eqref{max j}.
\end{proof}

The estimates in Lemma~\ref{rows cols} motivate us to consider the class of
$N \times n$ matrices $A = (a_{ij})$ whose entries satisfy the following inequalities
for some parameters $p \in (0,1]$ and $K \ge 1$:
\begin{equation}                            \label{aij}
  \begin{aligned}
    &\max_{i,j} |a_{ij}| \le 1; \\
    &\max_{i=1,\ldots,N} \Big( \sum_{j=1}^n a_{ij}^2 \Big)^{1/2} \le K \sqrt{np + \log(2N)}; \\
    &\max_{j=1,\ldots,n} \Big( \sum_{i=1}^N a_{ij}^2 \|B_i\|_2^2 \Big)^{1/2} \le K \sqrt{np + \log(2n)}.
  \end{aligned}
\end{equation}
We have proved that for random matrices whose entries satisfy $|a_{ij}| \le 1$
and $E a_{ij}^2 \le p$, conditions \eqref{aij} hold 
with a random parameter $K$ that satisfies $\E K \le C$.

\subsection{Concentration for a fixed vector}
%........................................

Our goal will be to estimate the magnitude of $\|BA\|$ for matrices of the form 
$A = (g_{ij} a_{ij})$,
where $g_{ij}$ are independent standard normal random variables, and
$a_{ij}$ are fixed numbers that satisfy conditions \eqref{aij}. Such an estimate
will be established in Proposition~\ref{norm sparse gauss matrices} below.
By the standard symmetrization, the same estimate will hold true if $A = (a_{ij})$
is a random matrix with entries as in Lemma~\ref{rows cols}.
This will be done in Corollary~\ref{norm sparse matrices}.
Finally, Theorem~\ref{small aij} will be deduced from this by decomposing the entries
of a random matrix according to their magnitude.

Our first step toward this goal is to check the magnitude of $\|BAx\|_2$ for a fixed
vector $x$.

\begin{lemma}                               \label{Ave}
  Let $N,n$ be positive integers. Consider an $N \times n$ random matrix 
  $A = (g_{ij} a_{ij})$
  where $g_{ij}$ are independent standard normal random variables and
  $a_{ij}$ are numbers that satisfy conditions \eqref{aij}.
  Let $B$ be an $n \times N$ matrix such that $\|B\| \le 1$.
  Then, for every vector $x \in B_2^n$ we have
  $$
  \E \|BAx\|_2 \le K \sqrt{np + \log(2n)}.
  $$
\end{lemma}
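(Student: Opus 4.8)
The plan is to compute $\E \|BAx\|_2$ by passing to the square, using Jensen's inequality, and then reading off the answer from conditions \eqref{aij}. Fix $x = (x_1,\ldots,x_n) \in B_2^n$. First I would write out the vector $BAx \in \R^n$. Denoting the columns of $B$ by $B_1,\ldots,B_N$ and the entries of the underlying matrix by $a_{ij}$, the $j$-th entry of $Ax$ is $\sum_{j'} (Ax)$... more carefully: $BAx = B(Ax)$, and the $i$-th coordinate of $Ax$ is $\sum_{j=1}^n g_{ij} a_{ij} x_j$. Hence
$$
BAx = \sum_{i=1}^N \Big( \sum_{j=1}^n g_{ij} a_{ij} x_j \Big) B_i,
$$
so $BAx$ is a Gaussian random vector in $\R^n$ (linear in the $g_{ij}$).

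The key step is to bound $\E \|BAx\|_2^2$, since then Jensen's inequality (concavity of $\sqrt{\cdot}$) gives $\E \|BAx\|_2 \le (\E \|BAx\|_2^2)^{1/2}$. To compute the second moment, I would use that the $g_{ij}$ are independent with mean zero and variance $1$, so cross terms vanish:
$$
\E \|BAx\|_2^2 = \sum_{i=1}^N \sum_{j=1}^n a_{ij}^2 x_j^2 \, \|B_i\|_2^2
= \sum_{j=1}^n x_j^2 \Big( \sum_{i=1}^N a_{ij}^2 \|B_i\|_2^2 \Big).
$$
Now I invoke the third inequality in \eqref{aij}, which says precisely that $\sum_{i=1}^N a_{ij}^2 \|B_i\|_2^2 \le K^2 (np + \log(2n))$ for every $j$. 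Since $\sum_j x_j^2 \le 1$, this yields $\E \|BAx\|_2^2 \le K^2 (np + \log(2n))$, and taking square roots completes the proof.

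I do not expect any serious obstacle here: the only point requiring mild care is correctly expanding $BAx$ so that the relevant quantity is the column-side bound in \eqref{aij} (the one weighted by $\|B_i\|_2^2$) rather than the row-side bound \eqref{max i}; the first inequality $\max_{ij}|a_{ij}|\le 1$ and the row bound play no role in this particular lemma. The computation is otherwise a routine orthogonality argument for Gaussians plus one application of Jensen's inequality.
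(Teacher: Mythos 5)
Your proposal is correct and is essentially the proof in the paper: expand $BAx=\sum_i (\sum_j g_{ij}a_{ij}x_j)B_i$, compute $\E\|BAx\|_2^2=\sum_j x_j^2\sum_i a_{ij}^2\|B_i\|_2^2$ using independence of the $g_{ij}$, bound by the column-side condition in \eqref{aij}, and finish with Jensen. Nothing to add.
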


\begin{proof}
Denoting as usual the columns of $B$ by $B_i$, we have
$$
BAx = \sum_{i=1}^N \Big( \sum_{j=1}^n g_{ij} a_{ij} x_j \Big) B_i.
$$
Since $\|x\|_2 \le 1$ and using the last condition in \eqref{aij}, we have
\begin{align*}
  \E \|BAx\|_2^2
  &= \sum_{i=1}^N \sum_{j=1}^n a_{ij}^2 x_j^2 \|B_i\|_2^2 \\
  &= \sum_{j=1}^n \Big( \sum_{i=1}^N a_{ij}^2 \|B_i\|_2^2 \Big) x_j^2 \\
  &\le \max_{j=1,\ldots,n} \sum_{i=1}^N a_{ij}^2 \|B_i\|_2^2
  \le K^2 (np + \log(2n)).
\end{align*}
This completes the proof.
\end{proof}

We will now strengthen Lemma~\ref{Ave} into a deviation inequality for $\|BAx\|_2$.
This is a simple consequence of the Gaussian concentration, Theorem~\ref{gauss concentration}.
This deviation inequality is universal in that it holds for any vector $x$; in the sequel we will
need more delicate inequalities that depend on the distribution of the coordinates in $x$.

\begin{lemma}[Universal deviation]              \label{universal deviation}
  Let $A$ and $B$ be matrices as in Lemma~\ref{Ave}.
  Then, for every vector $x \in B_2^n$ and every $t > 0$ we have
  \begin{equation}                      \label{eq universal deviation}
    \P \big\{ \|BAx\|_2 > K \sqrt{np + \log(2n)} + t \big\} \le e^{-c_0 t^2}.
  \end{equation}
\end{lemma}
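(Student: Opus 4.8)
The statement to prove is Lemma~\ref{universal deviation}, the deviation inequality
$$
\P \big\{ \|BAx\|_2 > K \sqrt{np + \log(2n)} + t \big\} \le e^{-c_0 t^2}
$$
for a fixed $x \in B_2^n$, where $A = (g_{ij} a_{ij})$ with Gaussian $g_{ij}$ and fixed coefficients $a_{ij}$ obeying \eqref{aij}. The plan is to view the scalar $\|BAx\|_2$ as a Lipschitz function of the Gaussian vector $(g_{ij}) \in \R^{Nn}$ and apply Gaussian concentration, Theorem~\ref{gauss concentration}, with Lemma~\ref{Ave} supplying the bound on the expectation.

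First I would set $g = (g_{ij})_{i \le N, j \le n}$, regarded as a standard Gaussian vector in $\R^{Nn}$, and define $f(g) = \|BAx\|_2$ where $A = A(g) = (g_{ij} a_{ij})$. Since $A$ depends linearly on the entries $g_{ij}$, and $x$ and $B$ are fixed, $f$ is a composition of a linear map $g \mapsto BA(g)x$ with the Euclidean norm, hence $f$ is Lipschitz. Next I would compute $\|f\|_\Lip$. Writing $BA(g)x = \sum_i \big(\sum_j g_{ij} a_{ij} x_j\big) B_i$, one sees that replacing $g$ by $g'$ changes $f$ by at most the Euclidean norm of $\sum_i \big(\sum_j (g_{ij}-g'_{ij}) a_{ij} x_j\big) B_i$, which is $\|BD\|$ applied appropriately; more cleanly, the linear map $g \mapsto BA(g)x$ has operator norm (from $\ell_2^{Nn}$ to $\ell_2^n$) equal to the square root of the largest eigenvalue of the associated Gram form, and a direct estimate using $\|B\| \le 1$, $\|x\|_2 \le 1$ and $|a_{ij}| \le 1$ (the first line of \eqref{aij}) gives $\|f\|_\Lip \le 1$. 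Then Theorem~\ref{gauss concentration} yields $\P(f(g) - \E f(g) > t) \le \exp(-c_0 t^2/\|f\|_\Lip^2) \le \exp(-c_0 t^2)$, and Lemma~\ref{Ave} gives $\E f(g) = \E\|BAx\|_2 \le K\sqrt{np + \log(2n)}$, so $\P\{\|BAx\|_2 > K\sqrt{np+\log(2n)} + t\} \le \P(f(g) - \E f(g) > t) \le e^{-c_0 t^2}$, which is exactly \eqref{eq universal deviation}.

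The only real point requiring care is the Lipschitz estimate $\|f\|_\Lip \le 1$. One way to see it: for $z \in \R^{Nn}$,
$$
\Big\| \sum_{i=1}^N \Big(\sum_{j=1}^n z_{ij} a_{ij} x_j\Big) B_i \Big\|_2
\le \|B\| \cdot \Big( \sum_{i=1}^N \Big(\sum_{j=1}^n z_{ij} a_{ij} x_j\Big)^2 \Big)^{1/2}
\le \Big( \sum_{i=1}^N \Big(\sum_{j=1}^n z_{ij}^2\Big)\Big(\sum_{j=1}^n a_{ij}^2 x_j^2\Big) \Big)^{1/2},
$$
using $\|B\| \le 1$ and Cauchy--Schwarz in $j$; since $|a_{ij}| \le 1$ and $\|x\|_2 \le 1$ the factor $\sum_j a_{ij}^2 x_j^2 \le 1$, so the whole expression is at most $\|z\|_2$. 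Applying this to $z = g - g'$ gives $|f(g) - f(g')| \le \|g - g'\|_2$ by the triangle inequality for $\|\cdot\|_2$, i.e. $\|f\|_\Lip \le 1$. I expect this to be the main (and essentially only) obstacle, and it is routine; everything else is a direct invocation of the two cited results.
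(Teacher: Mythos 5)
Your proof is correct and follows essentially the same route as the paper: view $\|BAx\|_2$ as a $1$-Lipschitz function of the Gaussians, apply Theorem~\ref{gauss concentration}, and use Lemma~\ref{Ave} for the mean. The only cosmetic difference is that you work directly with $f$ on $\R^{Nn}$, while the paper first observes the distributional identity $BAx \stackrel{d}{=} \sum_i g_i \lambda_i B_i$ with $\lambda_i = (\sum_j a_{ij}^2 x_j^2)^{1/2}$ to reduce to a function on $\R^N$; both yield the same Lipschitz bound $\le 1$ via $\|B\| \le 1$, $|a_{ij}|\le 1$, $\|x\|_2\le 1$.
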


\begin{proof}
As in the proof of Lemma~\ref{Ave}, we write
$$
BAx = \sum_{i=1}^N \Big( \sum_{j=1}^n g_{ij} a_{ij} x_j \Big) B_i
$$
where $B_i$ are the columns of the matrix $B$.
Therefore, the random vector $BAx$ is distributed identically with the random vector
$$
\sum_{i=1}^N g_i \l_i B_i,  \qquad
\text{where }
\l_i = \Big( \sum_{j=1}^n a_{ij}^2 x_j^2 \Big)^{1/2}
$$
and where $g_i$ are independent standard normal random variables.
Since all $|a_{ij}| \le 1$  by conditions \eqref{aij}, and $\|x\|_2 \le 1$  by the assumptions,
we have
$$
0 \le \l_i \le 1, \qquad i=1,\ldots,N.
$$
Consider the map $f : \R^N \to \R$ given by
$$
f(y) = \Big\| \sum_{i=1}^N y_i \l_i B_i \Big\|_2.
$$
Its Lipschitz norm equals
$$
\|f\|_\Lip
= \Big\| \sum_{i=1}^N \l_i^2 B_i \otimes B_i \Big\|^{1/2}
\le \max_i |\l_i| \cdot \Big\| \sum_{i=1}^N B_i \otimes B_i \Big\|^{1/2}
\le 1 \cdot \|B\| \le 1.
$$
Then the Gaussian concentration, Theorem~\ref{gauss concentration}, gives for every $t >0$:
$$
\P ( f(g) - \E f(g) > t ) \le \exp(-c_0 t^2),
$$
where $g = (g_1,\ldots,g_N)$.
Since as we noted above, $f(g)$ is distributed identically with $\|BAx\|_2$,
Lemma~\ref{Ave} completes the proof.
\end{proof}

\subsection{Control of sparse vectors}
%........................................

Since the spectral norm of $BA$ is the supremum of $\|BAx\|_2$ over all $x \in S^{n-1}$,
the result of Lemma~\ref{universal deviation} suggests that
$\E \|BA\| \lesssim \sqrt{np + \log N}$ should be true.
However, the deviation inequality in Lemma~\ref{universal deviation} is not
strong enough to prove this bound.
This is because the metric entropy of the sphere,
measured e.g. as the cardinality of its $\frac{1}{2}$-net, is $e^{cn}$. If we are to make the
bound on $\|BAx\|_2$ uniform over the net, we would need the probability estimate in
\eqref{eq universal deviation} at most $e^{-cn}$ (to allow a room for the union bound over $e^{cn}$
points $x$ in the net). This however would force us to make $t \sim \sqrt{n}$ or larger,
so the best bound we can get this way is $\E \|BA\|_2 \lesssim \sqrt{n}$.
This bound is too weak as it ignores the last two assumptions in \eqref{aij}.

Nevertheless, the bound in Lemma~\ref{universal deviation} can be made
uniform over a set of sparse vectors, whose metric entropy is smaller
than that of the whole sphere:

\begin{proposition}[Sparse vectors]                 \label{sparse}
  Let $A$ and $B$ be matrices as in Lemma~\ref{Ave}.
  There exists an absolute constant $c>0$ such that the following holds.
  Consider the set of vectors
  $$
  B_{2,0} := \Big\{ x \in \R^n, \; \|x\|_2 \le 1, \; \|x\|_0 \le cnp/\log(e/p) \Big\}.
  $$
  Then
  $$
  \E \sup_{x \in B_{2,0}} \|BAx\|_2 \le 3K\sqrt{np + \log(2n)}.
  $$
\end{proposition}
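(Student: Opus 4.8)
The plan is to prove the bound by a union bound over an $\e$-net of the sparse set $B_{2,0}$, combining the cardinality estimate for sparse nets with the universal deviation inequality of Lemma~\ref{universal deviation}. The key observation is that while the full sphere $S^{n-1}$ has metric entropy $e^{cn}$, the sparse vectors with $\|x\|_0 \le s$ for $s := cnp/\log(e/p)$ live on a union of $\binom{n}{s}$ coordinate subspaces, each of dimension $s$, so the whole set $B_{2,0}$ admits a $\frac{1}{2}$-net $\NN$ of cardinality at most $\binom{n}{s} 5^s \le (en/s)^s 5^s = \exp\big(s \log(C n /s)\big)$. By the choice of $s$ we have $n/s \asymp \log(e/p)/(cp)$, so $\log(Cn/s) \le C' \log(e/p)$ (absorbing the $\log\log$ terms), and hence $\log|\NN| \le C'' s \log(e/p) \le C''' np$ after plugging in the definition of $s$ with $c$ chosen small enough.

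First I would fix such a net $\NN \subset B_{2,0}$; every $x \in B_{2,0}$ lies within distance $\frac{1}{2}$ of some net point in the same coordinate subspace, so by a version of Proposition~\ref{norm on nets} adapted to the set $B_{2,0}$ (which is star-shaped and closed under restriction to its supporting subspaces) one gets
$$
\sup_{x \in B_{2,0}} \|BAx\|_2 \le 2 \max_{x \in \NN} \|BAx\|_2.
$$
Next, for each fixed $x \in \NN$ apply Lemma~\ref{universal deviation}: with $R := K\sqrt{np+\log(2n)}$, for every $t>0$ one has $\P\{\|BAx\|_2 > R + t\} \le e^{-c_0 t^2}$. Taking a union bound over $\NN$,
$$
\P\Big\{ \max_{x \in \NN} \|BAx\|_2 > R + t \Big\} \le |\NN|\, e^{-c_0 t^2} \le \exp\big(C''' np - c_0 t^2\big).
$$
For $t = A\sqrt{np}$ with $A$ a large absolute constant (so that $c_0 A^2 > C''' + 1$, say), and noting $\sqrt{np} \le R/K \le R$ since $K \ge 1$, this probability is at most $e^{-np}$, uniformly small; more precisely, integrating the tail for general $t \ge A\sqrt{np}$ yields $\E \max_{x\in\NN}\|BAx\|_2 \le R + C\sqrt{np} \le (1 + C)R$ (again using $\sqrt{np}\le R$). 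Doubling for the net approximation gives $\E \sup_{x\in B_{2,0}}\|BAx\|_2 \le 2(1+C)R$; tracking constants and choosing $c$ in the definition of $B_{2,0}$ small enough makes the final constant $3$.

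The main obstacle I anticipate is the entropy bookkeeping: one must verify that $\log|\NN| \lesssim np$ rather than merely $\lesssim np\log(e/p)$ or $\lesssim s$, and this is exactly where the precise form $s = cnp/\log(e/p)$ (with the $\log(e/p)$ in the \emph{denominator}) is essential — it is engineered so that $s \log(n/s)$, which naively would carry an extra logarithmic factor, collapses back to order $np$. One has to be slightly careful that $\log(n/s) \asymp \log(e/p)$: this holds as long as $p$ is bounded away from $1$ (for $p$ close to $1$ the set $B_{2,0}$ is essentially all of $B_2^n$ and $np \asymp n$, so the bound $\sqrt{n}$ is trivially fine), and for small $p$ one checks $\log(n/s) = \log(\log(e/p)/(cp)) \le \log(e/p) + \log\log(e/p) + \log(1/c) \le C\log(e/p)$. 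A secondary, minor point is justifying the net-reduction inequality on $B_{2,0}$ rather than on the sphere; this is routine since $B_{2,0} = \bigcup_{|I|\le s} (B_2^n \cap \R^I)$ and one nets each piece separately, of which there are $\binom{n}{s}$.
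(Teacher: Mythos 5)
Your argument is essentially the paper's proof: both decompose $B_{2,0}$ as a union of coordinate balls $B_2^J$ over $|J|\le \lfloor\lambda n\rfloor$ with $\lambda = cp/\log(e/p)$, net each piece, apply Lemma~\ref{universal deviation} pointwise, and take a union bound whose total cardinality $\binom{n}{\lfloor\lambda n\rfloor}5^{\lfloor\lambda n\rfloor}$ is at most $e^{C\lambda n\log(e/\lambda)}\le e^{c_0 np}$ for $c$ small, precisely the entropy bookkeeping you highlight. The only cosmetic difference is that the paper first inflates the threshold to $(K+1)\sqrt{np+\log(2n)}+t$ so that the probability bound becomes $e^{-c_0(np+t^2)}$, which absorbs the $e^{c_0 np}$ entropy term and leaves a tail $e^{-c_0 t^2}$ that integrates cleanly, whereas you instead set $t\sim\sqrt{np}$ and integrate the shifted tail; both are correct and give the same order.
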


\begin{proof}
Let $c>0$ be a constant to be determined later, and let
$\l := cp/\log(e/p)$.
Then
$$
B_{2,0} = \bigcup_{|J| = \lfloor \l n \rfloor} B_2^J,
$$
where the union is over all subsets $J \subset \{1,\ldots,n\}$
of cardinality $ \lfloor \l n \rfloor$, and where
$B_2^J = \{ x \in \R^J: \; \|x\|_2 \le 1\}$ denotes the unit Euclidean ball in $\R^J$.
By Lemma~\ref{net cardinality}, $B_2^J$ has a $\frac{1}{2}$-net $\NN_J$ of cardinality
at most $e^{2 \l n}$.
Let $t \ge 1$. For a fixed $x \in \NN_J$, Lemma~\ref{universal deviation} gives
$$
\P \big\{ \|BAx\|_2 > (K+1)\sqrt{np + \log(2n)} + t \big\}
\le \exp \big( -c_0 (np + t^2) \big).
$$
Using Proposition~\ref{norm on nets} and
taking the union bound over all $x \in \NN_J$, we obtain
\begin{align*}
  \P &\big\{ \frac{1}{2} \sup_{x \in B_2^J} \|BAx\|_2 > (K+1)\sqrt{np + \log(2n)} + t \big\} \\
  &\le \P \big\{ \sup_{x \in \NN_J} \|BAx\|_2 > (K+1)\sqrt{np + \log(2n)} + t \big\} \\
  &\le |\NN_J| \exp \big( -c_0 (np + t^2) \big)
  \le \exp \big( 2 \l n - c_0 (np + t^2) \big).
\end{align*}
Since there are $\binom{n}{\lfloor \l n \rfloor} \le (e/\l)^{\l n}$ ways to choose the subset $J$,
by taking the union bound over all $J$ we conclude that
\begin{multline}                            \label{exp long}
  \P \big\{ \frac{1}{2} \sup_{x \in B_{2,0}} \|BAx\|_2 > 2(K+1)\sqrt{np + \log(2n)} + t \big\} \\
  \le \exp \big( \l \log(e/\l) n + 2 \l n - c_0 (np + t^2) \big).
\end{multline}
Finally, if the absolute constant $c>0$ in the definition of $\l$ is chosen sufficiently
small, we have $\l \log(e/\l) n + 2 \l n \le c_0 n p$. Thus the right hand side of \eqref{exp long}
is at most
$$
\exp(-c_0 t^2).
$$
Integration completes the proof.
\end{proof}

\subsection{Control of spread vectors}
%........................................

Although we now have a good control of sparse vectors,
they unfortunately comprise a small part of the unit ball $B_2^n$.
More common but harder to deal with are ``spread vectors''
-- those having many coordinates that are not close to zero.
The next result gains control of the spread vectors.

\begin{proposition}[Spread vectors]                 \label{spread}
  Let $A$ and $B$ be matrices as in Lemma~\ref{Ave} with $N \ge n$.
  Let $M \ge 2$.
  Consider the set of vectors
  $$
  B_{2,\infty} := \Big\{ x \in \R^n, \; \|x\|_2 \le 1, \; \|x\|_\infty \le \frac{M}{\sqrt{n}} \Big\}.
  $$
  Then
  $$
  \E \sup_{x \in B_{2,\infty}} \|BAx\|_2 \le C \log^{3/2}(M) \cdot K \sqrt{np + \log(2N)}.
  $$
\end{proposition}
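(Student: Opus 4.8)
The plan is to proceed by a chaining-type argument that exploits the extra structure of the spread set $B_{2,\infty}$: every vector with $\|x\|_\infty \le M/\sqrt n$ can be decomposed into a bounded number of pieces, each of which is (after rescaling) a \emph{flat} vector supported on a set of controlled size, and flat vectors are close to sparse vectors in the sense already handled by Proposition~\ref{sparse}. First I would reduce the supremum over $B_{2,\infty}$ to a supremum over a suitable net: by Lemma~\ref{net cardinality} the sphere has a $\frac12$-net of cardinality $e^{cn}$, and by Proposition~\ref{norm on nets} it suffices to bound $\max_{x\in\NN}\|BAx\|_2$. The key observation is that, for a fixed spread $x$, the random variable $\|BAx\|_2$ already has the good mean $K\sqrt{np+\log(2n)}$ from Lemma~\ref{Ave} and the Gaussian concentration from Lemma~\ref{universal deviation}; the only issue is making this uniform over $e^{cn}$ vectors, which by itself is too many. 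So the real content is that the spread assumption lets one replace the crude $e^{cn}$ bound by a genuinely smaller entropy at each dyadic scale.

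The main step is a dyadic peeling of the coordinates. Fix $x\in B_{2,\infty}$ and for $k=0,1,2,\dots$ let $J_k=\{j: 2^{-k-1}<|x_j|\le 2^{-k}\}$, so $x=\sum_k x|_{J_k}$ and, since $\|x\|_2\le 1$, we have $|J_k|\le 2^{2k+2}$; moreover $\|x\|_\infty\le M/\sqrt n$ forces $J_k=\emptyset$ for $2^{-k}>M/\sqrt n$, i.e.\ only $k\lesssim \log(M)+\tfrac12\log n$ scales are nonempty. On each scale, $2^{k}x|_{J_k}$ is a vector in $B_2^{J_k}$ with $\|2^k x|_{J_k}\|_\infty\le 1$ supported on a set of size at most $2^{2k+2}$. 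When $2^{2k+2}\le cnp/\log(e/p)$ the piece lies (after scaling by $2^{-k}$) in the sparse set $B_{2,0}$ of Proposition~\ref{sparse}, so $\|BA(x|_{J_k})\|_2\le 2^{-k}\cdot 3K\sqrt{np+\log(2n)}$ on the good event there, and summing the geometric series $\sum_k 2^{-k}$ over the $O(\log M + \log n)$ relevant scales costs only a constant factor. For the \emph{large} scales, where $2^{2k+2}> cnp/\log(e/p)$, the support $J_k$ is large enough that one can afford the full union bound: there are at most $\binom{n}{|J_k|}\le e^{|J_k|\log(en/|J_k|)}$ choices of support and on each an $e^{c|J_k|}$-net, and since $|J_k|\gtrsim np/\log(e/p)$ while the Gaussian tail in Lemma~\ref{universal deviation} with $t\sim 2^{-k}s\sqrt{np}$ is $e^{-c 2^{-2k}np\, s^2}\cdot$(something) — here one has to be a little careful because $2^{-2k}np$ can be small, so instead one uses the tail $e^{-c_0(np+t^2)}$ directly with $t$ a constant multiple of $\sqrt{np+\log(2N)}$ and checks that $|J_k|\log(en/|J_k|)$ together with the support-counting entropy is dominated by $c_0(np+\log(2N))$, at which point $\log(2N)\le Mn$ and the spread bound $|J_k|\le 2^{2k+2}$ keep everything under control. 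Union-bounding over the (at most $\log M+\log n$) scales and integrating the resulting tail then yields the claimed bound with the $\log^{3/2}(M)$ factor — the $\log M$ from the number of scales, and the remaining $\log^{1/2}$ or so from optimizing the constants in the union bound at each scale.

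The hard part, and where I expect the $\log^{3/2}(M)$ (rather than $\log M$) to come from, is balancing the two regimes at the \emph{threshold} scale $k^\ast$ where $2^{2k^\ast}\sim cnp/\log(e/p)$: there the support is neither small enough to be ``automatically sparse'' nor large enough that the naive union bound over supports is free, so one must run a more careful entropy-versus-concentration tradeoff à la the sparse/spread dichotomy in \cite{RV square, RV}, splitting $B_2^{J_{k^\ast}}$ itself into sparse and spread parts and iterating, or else absorbing the deficit into the probability with an extra logarithmic factor in $t$. Handling this threshold cleanly, and keeping the dependence on $p$ (through $\log(e/p)$) and on $N$ (through $\log(2N)$ rather than $\log(2n)$) consistent across all scales, is the technical heart of the argument; everything else is bookkeeping of geometric sums and union bounds.
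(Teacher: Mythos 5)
Your dyadic peeling of the coordinates by magnitude ($J_k = \{j: 2^{-k-1}<|x_j|\le 2^{-k}\}$) is the same basic organizing idea the paper uses, but the proposal has two genuine gaps, one of which is the essential missing ingredient.

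First, for the small-support (large-coordinate) scales you claim that $\|BA(x|_{J_k})\|_2 \le 2^{-k}\cdot 3K\sqrt{np+\log(2n)}$ and that the geometric series $\sum_k 2^{-k}$ saves the factor. This is not correct: $x|_{J_k}$ is already a unit-ball vector, not one of norm $\sim 2^{-k}$, so Proposition~\ref{sparse} gives a bound of order $K\sqrt{np+\log(2n)}$ for each of the $\sim\log M$ scales with no geometric gain. (Best you can do here is Cauchy--Schwarz on $\sum_k\|x|_{J_k}\|_2$ against $\|x\|_2\le1$, which costs $\sqrt{\log M}$, not a constant.) Also note that $2^k x|_{J_k}$ has Euclidean norm up to $2^k$, so it is not a unit vector either, as you write.

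Second, and more seriously, the large-support scales cannot be handled with the universal deviation bound of Lemma~\ref{universal deviation} alone, no matter how you parametrize $t$. The tail there is $e^{-c_0 t^2}$ because the Lipschitz constant of the Gaussian map is bounded only by $1$; for support size $m$, the entropy $\binom{n}{m}e^{cm}$ then forces $t\gtrsim\sqrt{m\log(en/m)}$, which for $m$ comparable to $n$ and small $p$ is of order $\sqrt n$, not $\sqrt{np+\log(2N)}$. Your fallback --- the $e^{-c_0(np+t^2)}$ tail used in Proposition~\ref{sparse} --- only works when the support entropy is $\lesssim np$, which is exactly what fails for the large scales; so you correctly identify a ``threshold'' obstruction, but it is not an isolated boundary case, it is the whole problem. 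What resolves it in the paper is a sharper recomputation of the Lipschitz constant on each level set: for $x$ with $\|x\|_\infty = h_k = 1/\sqrt m$ and $\|x\|_2\le 1$, the coefficients $\l_i=(\sum_j a_{ij}^2 x_j^2)^{1/2}$ satisfy $\l_i\le \|x\|_\infty\,\bigl(\sum_j a_{ij}^2\bigr)^{1/2}\le K\sqrt{(np+\log(2N))/m}$ by the second condition in \eqref{aij}, so the Gaussian map has Lipschitz norm $\le K\sqrt{(np+\log(2N))/m}$ rather than $1$. This turns the tail into $e^{-c_0 u^2 m}$ for deviations $t=uK\sqrt{np+\log(2N)}$, which beats the entropy $e^{Cm\log M}$ at $u\sim\sqrt{\log M}$ uniformly in $m$, with no threshold case. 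The factor $\log^{3/2}M$ then appears cleanly: one $\sqrt{\log M}$ from the entropy-versus-concentration tradeoff at each level, and one $\log M$ from summing over the $\sim\log M$ levels. Without this sharpened Lipschitz estimate your argument does not close.

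A smaller structural point: the paper does not route the spread vectors through Proposition~\ref{sparse} at all. It builds a $\tfrac12$-net $\NN$ of $B_{2,\infty}$ directly in the $B_{2,\infty}$-norm (not a Euclidean net of cardinality $e^{cn}$, which you start with) whose vectors have every coordinate exactly on a dyadic level, and decomposes each net vector by level into pieces $\NN_k$ on which $\|x\|_\infty$ is \emph{exactly} $h_k$. This single-track construction avoids the sparse/large dichotomy and the boundary problem it creates.
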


\begin{proof}
This time we will need to work with multiple nets to account for
different possible distributions of the magnitude of the coordinates
of vectors $x \in B_{2,\infty}$. Since $\|x\|_\infty \le \|x\|_2$, without loss
of generality we can assume that $M \le \sqrt{n}$.

\smallskip

{\em Step 1: construction of nets.}
Let
$$
h_k := \frac{2^k}{\sqrt{n}}, \qquad
k = -2, -1, 0, 1, 2, \ldots, \log_2 M
$$
and let
$$
\NN := \{ x \in B_{2,\infty} :\; \forall j \; \exists k \text{ such that } |x_j| = h_k\}.
$$
A standard calculation shows that $\NN$ is an $\frac{1}{2}$-net of $B_{2,\infty}$
in the $B_{2,\infty}$-norm, i.e. for every $x \in B_{2,\infty}$ there exists $y \in \NN$
such that $x-y \in \frac{1}{2} B_{2,\infty}$.
Therefore, by Proposition~\ref{norm on nets},
$$
\sup_{x \in B_{2,\infty}} \|BAx\|_2
\le 2 \sup_{x \in \NN} \|BAx\|_2.
$$

Fix $x \in \NN$. 
Since $\|x\|_2 \le 1$, the number of coordinates of $x$ that satisfy
$|x_j| = h_k$ is at most $\lfloor h_k^{-2} \rfloor$, for every $k$.
Decomposing $x$ according to the coordinates whose absolute value is $h_k$,
we have by the triangle inequality
that
\begin{equation}                                        \label{sum log M}
  \sup_{x \in B_{2,\infty}} \|BAx\|_2
  \le 2 \sum_{k=-2}^{\log_2 M} \sup_{x \in \NN_k} \|BAy\|_2,
\end{equation}
where
$$
\NN_k = \big\{ x \in B_2^n :\;
\|x\|_0 \le \lfloor h_k^{-2} \rfloor;
\text{ all nonzero coordinates of $x$ satisfy } |x_j| = h_k
\big\}.
$$

Fix $k$ and assume that $\NN_k \ne \emptyset$. Since $h_k \le M/\sqrt{n}$, we have 
\begin{equation}							\label{m}
  m := \lfloor h_k^{-2} \rfloor 
  \ge \lfloor n/M^2 \rfloor 
  \ge 1.
\end{equation}
To estimate the cardinality of $\NN_k$, note that there are at most
$\min(m,n)$ ways to choose $\|x\|_0 := l$; there are $\binom{n}{l}$
ways to choose the support of $x$; and there are $2^l$
ways to choose the (signs of) nonzero coordinates of $x$.
Hence by Stirling's approximation and using \eqref{m}, we have
\begin{equation}                                    \label{Nk cardinality}
  |\NN_k|
  \le \sum_{l=1}^{\min(m,n)} \binom{n}{l} 2^l
  \le \min \big\{ \Big( \frac{2en}{m} \Big)^m, 4^n \big\}
  \le (4eM^2)^m
  \le \exp(C m \log M)
\end{equation}
where $C \ge 1$ is an absolute constant.

\smallskip

{\em Step 2: control of a fixed vector.}
Fix $m$ and fix $x \in \NN_k$.
As we saw in the proof of Lemma~\ref{universal deviation},
$$
\|BAx\|_2 \text{ is distributed identically with } \Big\| \sum_{i=1}^N g_i \l_i B_i \Big\|_2
$$
where
$$
\l_i = \Big( \sum_{j=1}^n a_{ij}^2 x_j^2 \Big)^{1/2}
$$
and where $g_i$ are independent standard normal random variables.
Since $x \in \NN_k$, we have $\|x\|_\infty = h_k \le \frac{1}{\sqrt{m}}$.
This and the second condition in \eqref{aij} yield
$$
\l_i \le \Big( \frac{1}{m} \sum_{j=1}^n a_{ij}^2 \Big)^{1/2}
\le K \sqrt{ \frac{np + \log(2N)}{m} }.
$$
We consider the map $f : \R^N \to \R$ given by
$$
f(y) = \Big\| \sum_{i=1}^N y_i \l_i B_i \Big\|_2.
$$
Repeating the estimate in the proof of Lemma~\ref{universal deviation},
we bound the Lipschitz norm as
$$
\|f\|_\Lip
\le \max_i |\l_i|
\le K \sqrt{ \frac{np + \log(2N)}{m} }.
$$
Then the Gaussian concentration, Theorem~\ref{gauss concentration}, gives for every $t >0$:
$$
\P ( f(g) - \E f(g) > t )
\le \exp \Big( -\frac{c_0 t^2 m}{K^2 (np + \log(2N))} \Big),
$$
where $g = (g_1,\ldots,g_N)$.
Since as we noted above, $f(g)$ is distributed identically with $\|BAx\|_2$,
Lemma~\ref{Ave} yields that
$$
\P ( \|BAx\|_2 > K \sqrt{np + \log(2n)} + t )
\le \exp \Big( -\frac{c_0 t^2 m}{K^2 (np + \log(2N))} \Big),
$$
Let $u > 0$ be arbitrary. Applying the above estimate for
$t = u K \sqrt{np + \log(2N)}$ and using $N \ge n$ we conclude that
\begin{equation}                                \label{fixed x}
  \P \big( \|BAx\|_2 > (1+u) K \sqrt{np + \log(2N)} \big)
  \le \exp(-c_0 u^2 m).
\end{equation}

\smallskip

{\em Step 3: union bound.}
Taking the union bound in \eqref{fixed x} over all $x \in \NN_k$
and using estimate \eqref{Nk cardinality} on the cardinality of $\NN_k$, we have
for all $u > 0$:
\begin{align*}
\P \big( \sup_{x \in \NN_k} \|BAx\|_2 > (1+u) K \sqrt{np + \log(2N)} \big)
  &\le |\NN_k| \exp(-c_0 u^2 m) \\
  &\le \exp(C m \log M - c_0 u^2 m).
\end{align*}
Let $s \ge 1$. We choose $u = C_1 s \sqrt{\log M}$, where $C_1 := \sqrt{C/c_0}$.
Since $u \ge 1$ and $m \ge 1$, $M \ge 2$, we obtain from the above estimate that
\begin{align*}
\P \big( \sup_{x \in \NN_k} \|BAx\|_2 > 2 C_1 s K \sqrt{\log(M) (np + \log(2N))}  \big)
  &\le \exp(C (1-s^2) m \log M) \\
  &\le \exp(c(1-s^2)).
\end{align*}
Integrating yields that 
$$
\E \sup_{x \in \NN_k} \|BAx\|_2 \le C_2 K \sqrt{\log(M) (np + \log(2N))}.
$$
Putting this back in \eqref{sum log M}, we conclude that 
$$
\E \sup_{x \in B_{2,\infty}} \|BAx\|_2
\le 2 (3 + \log M) \cdot C_2 K \sqrt{\log(M) (np + \log(2N))}.
$$
This completes the proof.
\end{proof}

\subsection{Norms of sparse matrices, and proof of Theorem~\ref{small aij}}
%........................................

Propositions~\ref{sparse} and \ref{spread} together handle all vectors in the 
unit ball, and yield the following norm estimate:

\begin{proposition}						\label{norm sparse gauss matrices} 
  Let $A$ and $B$ be matrices as in Lemma~\ref{Ave} with $N \ge n$.
  Then 
  $$
  \E \|BA\| \le C \log^{3/2} \Big( \frac{e}{p} \Big) \cdot K \sqrt{np + \log(2N)}.
  $$
\end{proposition}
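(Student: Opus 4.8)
The plan is to combine the two vector-class estimates already established --- Proposition~\ref{sparse} for sparse vectors and Proposition~\ref{spread} for spread vectors --- by partitioning the unit ball $B_2^n$ into these two classes and controlling $\|BA\| = \sup_{x \in B_2^n} \|BAx\|_2$ on each piece. The first step is to fix the threshold that separates ``sparse'' from ``spread'': set $M := \sqrt{n/s_0}$ where $s_0 := \lfloor cnp/\log(e/p) \rfloor$ is (essentially) the sparsity level from Proposition~\ref{sparse}, so that a vector $x \in B_2^n$ with $\|x\|_\infty > M/\sqrt{n}$ must have fewer than $s_0$ coordinates of size exceeding $M/\sqrt{n}$. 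More precisely, I would split an arbitrary unit vector $x = x' + x''$, where $x'$ carries the coordinates $j$ with $|x_j| > M/\sqrt{n}$ and $x''$ carries the rest. Then $\|x'\|_0 \le n/M^2 = s_0$, so after rescaling $x' \in \|x'\|_2 \cdot B_{2,0} \subset B_{2,0}$, while $\|x''\|_\infty \le M/\sqrt{n}$ and $\|x''\|_2 \le 1$, so $x'' \in B_{2,\infty}$. By the triangle inequality,
$$
\|BA\| \le \sup_{x \in B_{2,0}} \|BAx\|_2 + \sup_{x \in B_{2,\infty}} \|BAx\|_2.
$$

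The second step is simply to apply the two propositions and add. Proposition~\ref{sparse} gives $\E \sup_{x \in B_{2,0}} \|BAx\|_2 \le 3K\sqrt{np+\log(2n)}$, with no logarithmic-in-$p$ loss. Proposition~\ref{spread}, applied with this value of $M$, gives $\E \sup_{x \in B_{2,\infty}} \|BAx\|_2 \le C\log^{3/2}(M) \cdot K\sqrt{np+\log(2N)}$. Taking expectations in the displayed inequality above and using $\log(2n) \le \log(2N)$ (since $N \ge n$) yields
$$
\E \|BA\| \le C \log^{3/2}(M) \cdot K\sqrt{np + \log(2N)}.
$$
It remains only to bound $\log M$ in terms of $\log(e/p)$. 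Since $M^2 = n/s_0 \approx \log(e/p)/(cp) \le C'/p \cdot \log(e/p)$, we get $\log M \le C\log(e/p)$ (absorbing constants, using $p \in (0,1]$ so $\log(e/p) \ge 1$), whence $\log^{3/2}(M) \le C\log^{3/2}(e/p)$. That gives exactly the claimed bound.

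One subtlety I would want to handle carefully is the edge case $s_0 = 0$, i.e.\ when $cnp/\log(e/p) < 1$: then the sparse part $x'$ is forced to be zero, so every unit vector lies in $B_{2,\infty}$ with $M = \sqrt{n}$, and one checks directly that $\log^{3/2}(\sqrt{n})$ is still dominated by $C\log^{3/2}(e/p)\sqrt{np+\log(2N)}$ using $\log(2N) \ge \log(2n)$ --- so Proposition~\ref{spread} alone suffices in that regime. A second point is that Proposition~\ref{spread} requires $M \ge 2$; if the natural choice of $M$ is smaller than $2$ one just replaces it by $2$, which only weakens the bound by an absolute constant and is harmless since $\log^{3/2}(e/p) \ge 1$. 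I do not anticipate a genuine obstacle here --- all the real work (the net constructions, the Gaussian concentration arguments, the entropy-concentration tradeoff) has already been done in Propositions~\ref{sparse} and \ref{spread}; this proposition is the bookkeeping step that glues them together, and the only mildly delicate part is choosing the split threshold $M$ so that the sparsity budget of Proposition~\ref{sparse} exactly matches the $\ell_\infty$ budget of Proposition~\ref{spread}.
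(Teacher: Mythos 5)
Your proposal is correct and follows essentially the same route as the paper: decompose each $x \in B_2^n$ into a sparse part (coordinates exceeding $M/\sqrt{n}$, at most $n/M^2$ of them by Markov) and a spread part (the rest, with $\ell_\infty$-norm at most $M/\sqrt{n}$), then apply Propositions~\ref{sparse} and \ref{spread} and choose $M$ so that $\log^{3/2}(M) \lesssim \log^{3/2}(e/p)$. The only cosmetic difference is that the paper sets $M = \sqrt{\log(e/p)/(cp)}$ directly (automatically making $M \ge 2$ since $c \le 1/4$, and letting $B_{2,0} = \{0\}$ degenerate harmlessly when $cnp/\log(e/p) < 1$), which sidesteps the floor-function and $s_0 = 0$ bookkeeping that your parameterization $M = \sqrt{n/s_0}$ forces you to handle by hand.
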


\begin{proof}
Let $c$ be the absolute constant as in Proposition~\ref{sparse}; 
we can clearly assume that $c \le 1/4$.
We define
$$
M = \sqrt{ \frac{1}{cp} \log \frac{e}{p} }.
$$
Note that $M \ge 2$ as required in Proposition~\ref{sparse}.

Fix a vector $x \in B_2^n$. We decompose it according to the magnitude of the coordinates, as follows: 
$$
x = y + z, \qquad
y := x \, \one_{\{j : \; |x_j| > M/\sqrt{n} \}}, \qquad
z := x \, \one_{\{j : \; |x_j| \le M/\sqrt{n} \}}.
$$
Clearly, $\|y\|_2 \le \|x\|_2 \le 1$, $\|z\|_2 \le \|x\|_2 \le 1$.
By Markov's inequality, we have
$$
\|y\|_0 = \big| \{j : \; |x_j| > M/\sqrt{n} \} \big|
\le \frac{n}{M^2}
= \frac{cnp}{\log(e/p)}.
$$
Then $y \in B_{2,0}$ as in Proposition~\ref{sparse}.
On the other hand, $\|z\|_\infty \le M/\sqrt{n}$ by definition, so 
$z \in B_{2,\infty}$ as in Proposition~\ref{spread}.
Therefore, by Propositions~\ref{sparse} and \ref{spread} we have
\begin{align*}
\E \|BA\|
  &= \E \sup_{x \in B_2^n} \|BAx\|_2 
  \le \E \sup_{y \in B_{2,0}} \|BAy\|_2 
    + \E \sup_{z \in B_{2,\infty}} \|BAz\|_2 \\
  &\le 3K\sqrt{np + \log(2n)} 
    + C \log^{3/2}(M) \cdot K \sqrt{np + \log(2N)}.
\end{align*}
Our choice of $M$ and the assumption $N \ge n$ completes the proof.
\end{proof}

Finally, a standard symmetrization argument yields the following norm estimate, 
which we shall use for sparse random matrices. 

\begin{corollary}						\label{norm sparse matrices}
  Let $p \in (0,1]$ and let $N \ge n$ be positive integers. 
  Consider an $N \times n$ random matrix $A$ whose entries are independent random variables
  $a_{ij}$ with mean zero and such that
  $$
  \E |a_{ij}|^2 \le p, \qquad
  |a_{ij}| \le 1 \qquad
  \text{for every } i,j.
  $$
  Let $B$ be an $n \times N$ matrix such that $\|B\| \le 1$.
  Then
  $$
  \E \|BA\| \le C \log^{3/2} \Big( \frac{e}{p} \Big) \sqrt{np + \log(2N)}.
  $$
\end{corollary}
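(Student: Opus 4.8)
The plan is to derive Corollary~\ref{norm sparse matrices} from Proposition~\ref{norm sparse gauss matrices} by the standard Gaussian symmetrization recorded in Section~\ref{s: prelims}, together with the observation that the structural hypotheses \eqref{aij} hold (with a random constant $K$ of bounded expectation) for the random matrix at hand, thanks to Lemma~\ref{rows cols}.

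First I would symmetrize. Exactly as in \eqref{making gaussian}, since the entries $a_{ij}$ have mean zero, replacing $A=(a_{ij})$ by $A'=(\e_{ij}a_{ij})$ with independent symmetric Bernoulli $\e_{ij}$ only costs a factor of $2$, and then inserting independent standard normals $g_{ij}$ costs a further factor $(2\pi)^{1/2}$; so it suffices to bound $\E\|B(g_{ij}a_{ij})\|$. Now condition on the $(a_{ij})$. The array of fixed numbers $(a_{ij})$ satisfies $|a_{ij}|\le 1$ by hypothesis, and by Lemma~\ref{rows cols} (applied to the original random variables $a_{ij}$, whose squares have $\E a_{ij}^2\le p$) the quantities
$$
\max_i \Big(\sum_j a_{ij}^2\Big)^{1/2}, \qquad
\max_j \Big(\sum_i a_{ij}^2\|B_i\|_2^2\Big)^{1/2}
$$
are bounded by $K\sqrt{np+\log(2N)}$ and $K\sqrt{np+\log(2n)}$ respectively, where $K$ is a random variable (a function of the $a_{ij}$ only) with $\E K\le \E K^2{}^{1/2}\le C$ — more precisely, defining $K$ as the maximum of the two ratios forces $\E K^2\le C$ by \eqref{max i}, \eqref{max j}, hence $\E K\le C$. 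Thus, for almost every realization of $(a_{ij})$, conditions \eqref{aij} hold with this value of $K$, and Proposition~\ref{norm sparse gauss matrices} applies to the Gaussian matrix $(g_{ij}a_{ij})$ with that same $K$:
$$
\E_g \|B(g_{ij}a_{ij})\| \le C\log^{3/2}(e/p)\, K\sqrt{np+\log(2N)}.
$$

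Finally I would take expectation over $(a_{ij})$. Since $K$ depends only on the $a_{ij}$ and $\E K\le C$, Fubini and the previous display give
$$
\E\|B(g_{ij}a_{ij})\| \le C\log^{3/2}(e/p)\,\sqrt{np+\log(2N)}\,\E K \le C'\log^{3/2}(e/p)\,\sqrt{np+\log(2N)},
$$
and combining with the symmetrization bound $\E\|BA\|\le (2\pi)^{1/2}\E\|B(g_{ij}a_{ij})\|$ finishes the proof. The only mild subtlety — hardly an obstacle — is to make sure $K$ is genuinely $\sigma(a_{ij})$-measurable so that the conditioning is legitimate and $\E_g$ can be taken first; this is automatic from the definition of $K$ as a maximum over deterministic functionals of the array $(a_{ij})$. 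The rest is bookkeeping of absolute constants.
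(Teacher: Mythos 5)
Your proposal is correct and follows essentially the same route as the paper: symmetrize via \eqref{making gaussian}, condition on $(a_{ij})$, invoke Lemma~\ref{rows cols} to produce a random $K$ with $\E_a K \le C$ for which \eqref{aij} holds, apply Proposition~\ref{norm sparse gauss matrices} conditionally, and then integrate out $(a_{ij})$. Your extra care in defining $K$ explicitly as the maximum of the two normalized ratios and passing from $\E K^2 \le C$ to $\E K \le C$ by Jensen is a welcome elaboration of a step the paper states more tersely, but it is not a different argument.
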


\begin{remark}
  It would be interesting to remove the logarithmic term from this estimate.
\end{remark}

\begin{proof}
Let $g_{ij}$ be independent standard normal random variables. Consider the random matrix
$\tilde{A} = (g_{ij}a_{ij})$. By \eqref{making gaussian}, we have
\begin{equation}							\label{B tilde A}
 \E \|BA\| \le  (2\pi)^{1/2} \, \E \|B \tilde{A}\|.
\end{equation}
By Lemma~\ref{rows cols}, conditions \eqref{aij} hold with some random parameter $K \ge 1$
which only depends on the random variables $(a_{ij})$ and not on $(g_{ij})$, and 
which satisfies 
\begin{equation}							\label{EK}
  \E_a K \le C_1
\end{equation}
where $C_1$ is an absolute constant.
Here and below we write $\E_a$ when the expectation is with respect to $(a_{ij})$, 
and $\E_g$ if the expectation is with respect to $(g_{ij})$.

Condition on the random variables $(a_{ij})$. Proposition~\ref{norm sparse gauss matrices} 
then yields
$$
\E_g \|B \tilde{A}\| \le C \log^{3/2} \Big( \frac{e}{p} \Big) \cdot K \sqrt{np + \log(2N)}.
$$
Therefore, when we remove the conditioning, we obtain by \eqref{EK} that
$$
\E \|B \tilde{A}\| = \E_a \E_g \|B \tilde{A}\| 
\le C \log^{3/2} \Big( \frac{e}{p} \Big) \cdot C_1 \sqrt{np + \log(2N)}.
$$
This and \eqref{B tilde A} complete the proof.
\end{proof}

\bigskip

\begin{proof}[Proof of Theorem~\ref{small aij}.]
By the standard symmetrization technique described in Section~\ref{s: prelims}, 
we can assume without loss of generality that all $a_{ij}$ are symmetric random variables. 
We decompose the matrix $A$ according to the magnitude of its entries as follows. 
Given a subset $I \subset \R$, we define the truncated matrix 
$$
\trunc(A, I) = (a_{ij} \one_{\{ |a_{ij}| \in I \}}).
$$
Consider 
\begin{align*}
  &A^{(0)} = \trunc(A, [0,1]); \\
  &A^{(k)} = 2^{-k} \trunc(A, (2^{k-1}, 2^k] ), \quad k=1,2,\ldots
\end{align*}
Then we have a decomposition
$A = \sum_{k=0}^\infty 2^k A^{(k)}$.
This sum is actually finite because of the boundedness assumption on $a_{ij}$.
Indeed, we have
\begin{equation}							\label{A decomposed}
  A = A^{(0)} + \sum_{k=1}^{k_0} 2^k A^{(k)}
\end{equation}
where $k_0$ is the maximal integer such that 
\begin{equation}							\label{k0}
  2^{k_0-1} \le \Big( \frac{Mn}{\log(2N)} \Big)^{\frac{1}{2+\e}}.
\end{equation}
Because $a_{ij}$ are symmetric random variables, all entries $a_{ij}^{(k)}$ of the matrices $A^{(k)}$ 
satisfy $\E a_{ij}^{(k)} = 0$ and $|a_{ij}^{(k)}| \le 1$. 

Using Corollary~\ref{norm sparse matrices} for the matrix $A^{(0)}$ and $p=1$, we obtain
\begin{equation}							\label{BA0}
  \E \|BA^{(0)}\| \le C_1 \sqrt{n + \log(2N)}
  \le 2C_1 \sqrt{Mn},
\end{equation}
where the last line follows because $\log(2N) \le Mn$ and $M \ge 1$ by the hypothesis.

Now we fix $1 \le k \le k_0$. Using the $(2+\e)$-th moment
assumption, we have by Markov's inequality that
$$
\P (a_{ij}^{(k)} \ne 0)
\le \P (a_{ij} > 2^{k-1})
\le 2^{-(2+\e)(k-1)} =: p_k.
$$
This and the bound $|a_{ij}^{(k)}| \le 1$ yield
$\E (a_{ij}^{(k)})^2 \le p_k.$
With this, we apply Corollary~\ref{norm sparse matrices} for the matrix $A^{(k)}$
and obtain
$$
\E \|BA^{(k)}\| \le C \log^{3/2} \Big( \frac{e}{p_k} \Big) \sqrt{n p_k + \log(2N)}.
$$
By the definition of $p_k$ and by \eqref{k0}, we have
$$
p_k \ge p_{k_0} \ge \frac{\log(2N)}{Mn}.
$$
Therefore, $n p_k + \log(2N) \le (1+M) n p_k \le 2Mnp_k$, so 
\begin{align}										\label{BAk}
\E \|BA^{(k)}\| 
  &\le C \log^{3/2} \Big( \frac{e}{p_k} \Big) \sqrt{2Mn p_k} \nonumber\\
  &\le C_2 \big[ 1 + (2+\e)(k-1) \big]^{3/2} 2^{-(1+\e/2)(k-1)} \cdot \sqrt{Mn}.
\end{align}

Using \eqref{A decomposed} and the triangle inequality, then 
using \eqref{BA0} and \eqref{BAk}, we conclude that 
\begin{align*}
\E \|BA\| 
  &\le \E \|BA^{(0)}\| + \sum_{k=1}^{k_0} 2^k \, \E \|BA^{(k)}\| \\
  &\le 2 C_1 \sqrt{Mn} 
    + \sum_{k=1}^{k_0} C_2 \big[ 1 + (2+\e)(k-1) \big]^{3/2} 2^{k-(1+\e/2)(k-1)} \cdot \sqrt{Mn} \\
  &\le C_2 \sqrt{Mn} \cdot \sum_{k=1}^\infty k^{3/2} 2^{-(\e/2)k} \\
  &= C(\e) \sqrt{Mn}.
\end{align*}
This completes the proof of Theorem~\ref{small aij}.
\end{proof}

\subsection{Almost square matrices}
%...................................................

The main application of Theorem~\ref{small aij} is for almost square 
matrices -- those for which $N = n^{1+o(1)}$. The next lemma verifies the hypotheses of
Theorem~\ref{small aij} for such matrices. 

\begin{lemma}								\label{almost square hypotheses}
  Let $\e \in (0,1)$ and let $N,n$ be positive integers satisfying $N \le n^{1 + \e/10}$.
  Let $A$ be an $N \times n$ random matrix whose entries are independent random variables
  with $(4+\e)$-th moment bounded by $1$.
  Define the random variable $M$ by the equation
  \begin{equation}							\label{M}
    \max_{i,j} |a_{ij}| = \Big( \frac{Mn}{\log(2N)} \Big)^{\frac{1}{2+\e/4}}.
  \end{equation}
  Then, for every $t \ge 1$, one has
  $$
  \P \big( M > C(\e) t \big) \le \frac{1}{t^2}.
  $$
  In particular, one has  $\E M \le C_1(\e)$.
\end{lemma}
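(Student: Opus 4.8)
The plan is to bound the random variable $M$ by bounding $\max_{i,j}|a_{ij}|$ directly via the moment hypothesis and a union bound. First I would unpack the definition \eqref{M}: solving for $M$ gives
$$
M = \frac{\log(2N)}{n} \Big( \max_{i,j} |a_{ij}| \Big)^{2+\e/4}.
$$
So the event $\{M > C(\e) t\}$ translates to $\{\max_{i,j}|a_{ij}| > \big(C(\e) t n / \log(2N)\big)^{1/(2+\e/4)}\}$. Call the threshold on the right $s(t)$. The task is then simply to show $\P\big(\max_{i,j}|a_{ij}| > s(t)\big) \le 1/t^2$ for all $t \ge 1$.

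The second step is a straightforward union bound with Markov's inequality at the $(4+\e)$-th moment. For a single entry, $\P(|a_{ij}| > s) \le s^{-(4+\e)}$. Taking the union over all $Nn$ entries,
$$
\P\big( \max_{i,j} |a_{ij}| > s(t) \big) \le Nn \cdot s(t)^{-(4+\e)}.
$$
Now substitute $s(t) = \big(C(\e) t n / \log(2N)\big)^{1/(2+\e/4)}$. The exponent $(4+\e)/(2+\e/4)$ is strictly larger than $2$ — indeed for $\e \in (0,1)$ it is at least roughly $2 + $ something positive — which is exactly the slack that will absorb the factor $Nn$ and produce the needed $t^{-2}$. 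The third step is therefore the elementary estimate: using $N \le n^{1+\e/10}$ one has $Nn \le n^{2+\e/10}$, and $\log(2N) \le C \log(2n)$; one checks that
$$
Nn \cdot \Big( \frac{\log(2N)}{n} \Big)^{(4+\e)/(2+\e/4)} \le \big( C(\e) \big)^{(4+\e)/(2+\e/4)} \cdot n^{\,2 + \e/10 - (4+\e)/(2+\e/4)} \cdot (\log n)^{O_\e(1)}.
$$
Since $(4+\e)/(2+\e/4) - (2+\e/10) \ge c\e$ for some absolute $c>0$ (this is the one arithmetic inequality to verify honestly), the power of $n$ is negative, and for $n$ large enough the polylogarithmic factor is dominated; choosing $C(\e)$ large enough then makes the whole bound at most $t^{-(4+\e)/(2+\e/4)} \le t^{-2}$ for all $t \ge 1$. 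For small $n$ one absorbs everything into $C(\e)$ so that the probability bound is trivially $\le 1/t^2$ (or even that the event is empty).

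The final sentence, $\E M \le C_1(\e)$, follows by integrating the tail: $\E M = \int_0^\infty \P(M > u)\,du \le C(\e) + \int_{C(\e)}^\infty \P(M > u)\,du$, and on the range $u \ge C(\e)$ we have $\P(M>u) \le (C(\e)/u)^2$ by the tail bound just proved (rescaling $t = u/C(\e)$), which is integrable. The main obstacle — such as it is — is purely bookkeeping: making sure the exponent $(4+\e)/(2+\e/4)$ genuinely beats $2 + \e/10$ with room to spare, and then tracking the $\log$ factors and the $N$-versus-$n$ conversion so that the constant really depends only on $\e$. There is no conceptual difficulty; this is a Markov-plus-union-bound argument and the hypothesis $N \le n^{1+\e/10}$ together with the gap between the $(4+\e)$-th moment and the exponent $2+\e/4$ in \eqref{M} were chosen precisely to make it go through.
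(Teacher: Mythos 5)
Your proposal is correct and follows essentially the same route as the paper: solve \eqref{M} for $M$, apply Markov's inequality at moment $4+\e$ to each entry, take the union bound over the $Nn$ entries, and then use $N \le n^{1+\e/10}$ together with the exponent gap $(4+\e)/(2+\e/4) > 2 + \e/10$ (which you correctly verify to hold with margin $\gtrsim \e$) to absorb the $Nn$ factor and the logarithms into $C(\e)$. The only cosmetic difference is bookkeeping: the paper picks the threshold $(t^2 nN)^{1/(4+\e)}$ up front so that the union bound gives exactly $1/t^2$, and then shows this threshold is dominated by $(C(\e) t n/\log(2N))^{1/(2+\e/4)}$ via the inequality $nN \le (C(\e)n/\log(2N))^{2+\e/8}$; you instead substitute the target threshold directly and bound the resulting prefactor, which requires the same arithmetic. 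One tiny inaccuracy: your parenthetical remark that for small $n$ "the event is empty" is not right --- $\max_{i,j}|a_{ij}|$ is unbounded, so $M$ can exceed any fixed threshold with positive probability --- but your primary point there (the prefactor $n^{2+\e/10-(4+\e)/(2+\e/4)}(\log n)^{O_\e(1)}$ is uniformly bounded over all $n \ge 1$, so a single $C(\e)$ works) is correct and is all that is needed.
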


\begin{proof}
By Markov's inequality, we have for every $i,j$ that
$$
\P \big( |a_{ij}| > s \big)
\le \frac{1}{s^{4+\e}}, \qquad s > 0.
$$
Let $t \ge 1$. We then have 
$$
\P \big( |a_{ij}| > (t^2 n N)^{\frac{1}{4+\e}} \big)
\le \frac{1}{t^2 n N}.
$$
Taking the union bound over all $nN$ random variables $a_{ij}$, we obtain 
\begin{equation}								\label{max aij tail}
  \P \big( \max_{i,j} |a_{ij}| > (t^2 n N)^{\frac{1}{4+\e}} \big)
  \le \frac{1}{t^2}.
\end{equation}
The assumption $N \le n^{1+\e/10}$ yields that 
$$
nN \le \Big( \frac{C(\e)n}{\log(2N)} \Big)^{2+\e/8}.
$$
Therefore, since $\frac{2+\e/8}{4+\e} \le \frac{1}{2+\e/4}$ and $t \ge 1$, we have
$$
(t^2 n N)^\frac{1}{4+\e} \le \Big( \frac{C(\e) t n}{\log(2N)} \Big)^\frac{1}{2+\e/4}.
$$
Using this in \eqref{max aij tail}, we obtain
$$
\P \big( M > C(\e) t \big)
\le \P \Big( \max_{i,j} |a_{ij}| > \Big( \frac{C(\e) t n}{\log(2N)} \Big)^\frac{1}{2+\e/4} \Big)
\le \frac{1}{t^2}.
$$
Integration completes the proof.
\end{proof}

We are now ready to state and prove a partial case of Theorem~\ref{main}
for almost square matrices.

\begin{corollary}							\label{almost square}
  Let $\e \in (0,1)$ and let $N,n$ be positive integers satisfying $N \le n^{1 + \e/10}$.
  Let $A$ be an $N \times n$ random matrix whose entries are independent random variables
  with mean zero and $(4+\e)$-th moment bounded by $1$.
  Let $B$ be an $n \times N$ matrix such that $\|B\| \le 1$.
  Then
  $$
  \E \|BA\| \le C(\e) \sqrt{n}.
  $$
\end{corollary}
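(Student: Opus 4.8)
The plan is to deduce Corollary~\ref{almost square} from Theorem~\ref{small aij} by conditioning on the size of the largest entry of $A$ and balancing the resulting bound against the tail estimate for that size supplied by Lemma~\ref{almost square hypotheses}. First I would make a few harmless reductions. Adding zero rows to $A$ and zero columns to $B$ leaves $BA$ unchanged and keeps $N \le n^{1+\e/10}$, so I may assume $N \ge n$. By the symmetrization inequality of Section~\ref{s: reductions}, $\E\|BA\| \le 2\,\E\|B(\e_{ij}a_{ij})\|$ where $\e_{ij}$ are independent Bernoulli signs, and since $\E|\e_{ij}a_{ij}|^{4+\e} = \E|a_{ij}|^{4+\e} \le 1$, I may further assume that the entries $a_{ij}$ are symmetric. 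Finally, for $n$ below a constant depending on $\e$ the statement is trivial from $\E\|BA\| \le \E\|A\|_\HS \le (\sum_{i,j}\E a_{ij}^2)^{1/2} \le (nN)^{1/2} \le C(\e)$, so I may assume that $n$ is at least that constant.

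Next, introduce the random variable $M$ through the equation \eqref{M} of Lemma~\ref{almost square hypotheses}, so that $\P(M > C(\e)t) \le t^{-2}$ for every $t \ge 1$. The heart of the argument is the bound $\E(\|BA\|^2 \mid M \le t) \le C(\e)\, t\, n$ for $t \ge 1$. The event $\{M \le t\}$ is exactly $\bigcap_{i,j}\{|a_{ij}| \le s_t\}$ with $s_t := (tn/\log(2N))^{1/(2+\e/4)}$, so conditioning on it keeps the entries independent, and, since they were symmetric, keeps them centered. Applying Lemma~\ref{exp reduces by conditioning} to $|a_{ij}|^{2+\e/4}$ gives $\E(|a_{ij}|^{2+\e/4} \mid |a_{ij}| \le s_t) \le \E|a_{ij}|^{2+\e/4} \le (\E|a_{ij}|^{4+\e})^{(2+\e/4)/(4+\e)} \le 1$. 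Thus, conditionally on $\{M \le t\}$, the matrix $A$ satisfies the hypotheses of Theorem~\ref{small aij} with $\e/4$ in place of $\e$ and with $t$ in place of $M$ (the requirement $\log(2N) \le tn$ holds since $N \le n^{1+\e/10}$, $t \ge 1$ and $n$ is large). Hence Corollary~\ref{small aij tail} with $q=2$ yields $(\E(\|BA\|^2 \mid M \le t))^{1/2} \le C_0(\e/4)\sqrt{2tn}$; squaring, $\E(\|BA\|^2 \mid M \le t) \le K^2 t$ with $K := C(\e)\sqrt{n}$.

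It now remains to apply Lemma~\ref{conditional exp} with $X := \|BA\|$, $Y := M$, the constant $K = C(\e)\sqrt{n}$ just obtained, and $L = C(\e)$ from Lemma~\ref{almost square hypotheses}. It gives $\E\|BA\| \le CK\sqrt{L} = C(\e)\sqrt{n}$, and undoing the factor $2$ from symmetrization completes the proof. The step I expect to demand the most care is the conditioning: one cannot condition on $\max_{i,j}|a_{ij}|$ itself without destroying the independence of the entries, which is why the argument conditions on the threshold events $\{M \le t\}$ and relies on the two preliminary facts — the symmetrization (so that truncated entries stay centered) and Lemma~\ref{exp reduces by conditioning} (so that the truncated moment stays $\le 1$); everything else is routine tracking of $\e$-dependent constants.
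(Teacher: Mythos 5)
Your proposal is correct and follows essentially the same route as the paper: pad to make $N\ge n$, symmetrize to keep the truncated entries centered, condition on the product event $\{M\le t\}$ (using Lemma~\ref{exp reduces by conditioning} to preserve the moment bound and the equation~\eqref{M} to bound the entries), apply Corollary~\ref{small aij tail} with $\e/4$ in place of $\e$, and finish with Lemma~\ref{conditional exp} and Lemma~\ref{almost square hypotheses}. The only cosmetic difference is that you dispose of small $n$ separately at the outset so that $\log(2N)\le tn$ holds for $t\ge 1$, whereas the paper instead replaces $M$ by $\max(M,10)$ when invoking Corollary~\ref{small aij tail}; both devices serve the same purpose.
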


\begin{proof}
Without loss of generality we may assume that $N \ge n$ by adding an appropriate number
of zero rows to $A$ and zero columns to $B$.
Also, using the standard symmetrization, we can assume that the random variables $a_{ij}$ are
symmetric. 
Let $M$ be the random variable as in Lemma~\ref{almost square hypotheses}, 
and let $t \ge 1$. By the definition, $\{ M \le t \}$ is the product event. Therefore, conditioning on this event
(i) preserves the independence of the entries of $A$; 
(ii) makes all these entries bounded as in \eqref{M};
(iii) can only reduce their moments by Lemma~\ref{exp reduces by conditioning}, thus for all $i,j$ we have
$$
\E \big( |a_{ij}|^{2+\e/4} \,|\, M \le t \big) \le \E |a_{ij}|^{2+\e/4} \le 1.
$$
Therefore, we can apply Corollary~\ref{small aij tail} conditionally, with $\e/4$ and with $M$ replaced
by $\max(M,10)$, which gives
$$
\big[ \E \big( \|BA\|^2 \,|\, M \le t \big) \big]^{1/2} \le C_0(\e) \sqrt{tn}
\quad \text{for } t \ge 1.
$$
Additionally, by Lemma~\ref{almost square hypotheses} we have
$$
\P \big( M > C(\e) t \big) \le \frac{1}{t^2}
\quad \text{for } t \ge 1.
$$
By Lemma~\ref{conditional exp}, this yields
$$
\E \|BA\| 
\le (\E \|BA\|^2)^{1/2}
\le C_1(\e) \sqrt{n}
$$
as claimed.
\end{proof}

\section{Completion of the proof of Theorem~\ref{main}}
%----------------------------------------------

\begin{proof}[Proof of Theorem~\ref{main}.]

By adding an appropriate number of zero rows to $B$ or zero columns to $A$ we can 
assume that $m = n$, thus $B$ is an $n \times N$ matrix. Consider the exponent
$$
K = K(\e) = \frac{1}{2} + \frac{1}{\e}.
$$
As usual, let $B_1,\ldots,B_N$ be the columns of the matrix $B$. 
Consider the subset $I \subset \{1,\ldots, N\}$ of large columns defined as 
$$
I := \big\{ i :\; \|B_i\|_2 > C_0(\e) \log^{-K} (2n) \big\}.
$$
Here we choose $C_0(\e)$ sufficiently large so that,
by \eqref{B HS} and Markov's inequality, we have 
$$
N_0 := |I| < C_0(\e)^{-2} n \log^{2K} (2n) \le n^{1+\e/10}.
$$
Denote by $A_I$ the $N_0 \times n$ submatrix of $A$ whose rows are in $I$, 
by $B_I$ the $n \times N_0$ submatrix of $B$ whose columns are in $I$ 
(and similarly for $I^c$). The decomposition $BA = B_I A_I + B_{I^c} A_{I^c}$
implies by the triangle inequality that 
\begin{equation}						\label{BA split}
  \|BA\| \le \|B_I A_I\| + \|B_{I^c} A_{I^c}\|.
\end{equation}
This splits our problem into two subproblems, one for $I$ and one for $I^c$.
Of course, if $I$ or $I^c$ is empty then the corresponding matrix is zero 
and we can skip its estimation. 

The matrices $A_I$, $B_I$ are almost square, so Corollary~\ref{almost square} 
applies for them, giving
\begin{equation}						\label{for I}
  \E \|B_I A_I\| \le C(\e) \sqrt{n}.
\end{equation}

On the other hand, the columns of the matrix $B_{I^c}$ are small by the definition of $I$:
$$
\|B_i\|_2 \le C_0(\e) \log^{-K} (2n) \qquad \text{for every } i \in I^c.
$$
Therefore, Theorem~\ref{small columns} applies to the matrices $A_{I^c}$, $B_{I^c}$,
which gives 
\begin{equation}						\label{for Ic}
  \E \|B_{I^c} A_{I^c}\| \le C_1(\e) \sqrt{n}.
\end{equation}
Putting estimates \eqref{for I} and \eqref{for Ic} into \eqref{BA split}, we conclude that
$$
\E \|BA\| \le C_2(\e) \sqrt{n}.
$$
Theorem~\ref{main} is proved.
\end{proof}

\end{document}